\def\cgaps#1{}
\def\Cgaps#1{}
\def\undersetbrace#1\to#2{\underbrace{#2}_{#1}}								 
\def\oversetbrace#1\to#2{\overbrace{#2}^{#1}}
\def\AMSunderset#1\to#2{\underset{#1}{#2}}
\def\AMSoverset#1\to#2{\overset{#1}{#2}}
\def\norm#1{\left\|{#1}\right\|}
\def\adj#1{\on{Adj}({#1})}
\newtheorem{ass}{Assumption}[subsection]
\newtheorem*{ass*}{Assumption}
\newtheorem*{prop*}{Proposition}
\newtheorem*{thm*}{Theorem}
\newtheorem*{lem*}{Lemma}
\newtheorem*{cor*}{Corollary}
\numberwithin{equation}{subsection}
\numberwithin{equation}{section}
\newenvironment{demo}[1]{{\textit{#1.}}}{\par\smallskip}
\def\ign#1{}             
\def\o{\circ}
\def\X{\mathfrak X}
\def\be{\beta}
\def\ep{\varepsilon}
\def\th{\theta}
\def\si{\sigma}
\def\ph{\varphi}
\def\om{\omega}
\def\Ga{\Gamma}
\def\De{\Delta}
\def\La{\Lambda}
\def\Om{\Omega}
\def\i{^{-1}}
\def\x{\times}
\def\p{\partial}
\let\on=\operatorname
\def\L{\mathcal L}
\def\grad{\on{grad}}%
\def\AMSonly#1{}
\def\Id{\on{Id}}
\def\R{\mathbb R}
\def\Tr{\on{Tr}}
\def\vol{{\on{vol}}}
\def\Vol{{\on{Vol}}}
\def\Imm{{\on{Imm}}}
\def\Emb{{\on{Emb}}}
\def\Diff{{\on{Diff}}}
\def\g{\overline{g}}
\def\grad{\on{grad}}
\def\dist{{\on{dist}}}
\def\Nor{{\on{Nor}}}
\def\sym{{\on{sym}}}
\def\alt{{\on{alt}}}
\def\Hor{{\on{Hor}}}
\def\hor{{\on{hor}}}
\def\vert{{\on{vert}}}
\begin{document}

\title{Sobolev metrics on shape space of surfaces}
\author{Martin Bauer, Philipp Harms, Peter W. Michor}
\address{
Martin Bauer, Peter W. Michor:
Fakult\"at f\"ur Mathematik, Universit\"at Wien,
Nordbergstrasse 15, A-1090 Wien, Austria.
}
\address{Philipp Harms:
EdLabs, Harvard University,
44 Brattle Street,
Cambridge, MA 02138, USA.}

\email{Bauer.Martin@univie.ac.at}
\email{pharms@edlabs.harvard.edu}
\email{Peter.Michor@univie.ac.at}
\dedicatory{Dedicated to Tudor Ratiu at the occasion of his 60th birthday}
\thanks{All authors were supported by 
FWF Project 21030 and by the 
   NSF-Focused Research Group: 
   The geometry, mechanics, and statistics of the infinite dimensional
   shape manifolds.}
\date{\today}
\keywords{}
\subjclass[2000]{Primary 58B20, 58D15, 58E12}
\begin{abstract}
Let $M$ and $N$ be connected manifolds without boundary with
$\dim(M) < \dim(N)$, and let $M$ compact. 
Then shape space in this work is either the manifold of submanifolds of $N$ that are 
diffeomorphic to $M$, 
or the orbifold of unparametrized immersions of $M$ in $N$.
We investigate the Sobolev Riemannian metrics on shape space: These are induced by 
metrics of the following form on the space of immersions:
$$ G^P_f(h,k) = \int_{M} \g( P^fh, k)\, \vol(f^*\g)$$
where $\g$ is some fixed metric on $N$, $f^*\g$ is the induced metric on $M$,
$h,k \in \Ga(f^*TN)$ are tangent vectors at $f$ to the space of embeddings or 
immersions, and $P^f$
is a positive, selfadjoint, bijective scalar pseudo differential operator of order $2p$ depending 
smoothly on $f$. We consider later specifically the operator 
$P^f=1 + A\De^p$, where $\De$ is the Bochner-Laplacian on $M$ induced by the metric $f^*\g$.
For these metrics we compute the geodesic equations both on the space of immersions and on shape 
space, and also the conserved momenta arising from the obvious symmetries. 
We also show that the geodesic equation is well-posed on spaces of immersions, and also on 
diffeomorphism groups. We give examples of numerical solutions.  
\end{abstract}

\maketitle

\section{Introduction}\label{in}

Many procedures in science, engineering, and medicine produce data in the form of shapes.
If one expects such a cloud to follow roughly a submanifold of a certain type, 
then it is of utmost importance to describe the space of all possible submanifolds 
of this type (we call it a shape space hereafter) 
and equip it with a significant metric which is able to distinguish special features 
of the shapes. Most of the metrics used today in data analysis and computer vision are of an ad-hoc 
and naive nature; one embeds shape space in some Hilbert space or Banach space and uses the 
distance therein. Shortest paths are then line segments, but they leave shape space quickly. 

Riemannian metrics on shape space itself are a better solution.
They lead to geodesics, to curvature and diffusion. 
Eventually one also needs statistics on shape space like means of clustered subsets of data 
(called Karcher means on Riemannian manifolds) and standard deviations. 
Here curvature will play an essential role; statistics on Riemannian 
manifolds seems hopelessly underdeveloped just now. 

\subsection{The shape spaces used in this work}

Thus, initially, by a shape we mean a
smoothly embedded surface in $N$ which is diffeomorphic to $M$. 
The space of these shapes will be  denoted $B_e=B_e(M,N)$ and viewed as the 
quotient (see \cite{Michor102} for more details)
$$ B_e(M,N) = \on{Emb}(M,N)/\on{Diff}(M)$$
of the open subset $\on{Emb}(M,N)\subset C^\infty(M,N)$ 
of smooth embeddings of $M$ in $N$,
modulo the group of smooth diffeomorphisms of $M$. 
It is natural to consider all possible {\it immersions} as well
as embeddings, and thus introduce the larger
space $B_i=B_i(M,N)$ as the quotient of the space of smooth immersions by the
group of diffeomorphisms of $M$ (which is, however, no longer a manifold,
but an orbifold with finite isotropy groups, see \cite{Michor102}).
\begin{equation*}
\xymatrix{
\on{Emb}(M,N) \ar@{^{(}->}[d] \ar@{->>}[r] &
\on{Emb}(M,N)/\on{Diff}(M) \ar@{^{(}->}[d] \ar@{=}[r]  & 
B_e(M,N) \ar@{^{(}->}[d] \\
\Imm(M,N) \ar@{->>}[r] &
\Imm(M,N)/\on{Diff}(M)  \ar@{=}[r]  & B_i(M,N)
}
\end{equation*}
More generally, a shape will be an element of the Cauchy completion (i.e., the metric completion 
for the geodesic distance) of $B_i(M,N)$ with 
respect to a suitably chosen Riemannian metric. This will allow for corners. 
In practice, discretization for numerical algorithms will hide the need to go to the Cauchy 
completion. 

\subsection{Where this work comes from}
In \cite{Michor107}, Michor and Mumford have investigated a variety of Riemannian metrics on the 
shape space 
$$
B_i(S^1,\mathbb R^2)=\Imm(S^1,\mathbb R^2)/\on{Diff}(S^1)
$$
of unparametrized immersion of the circle into the plane. 
In \cite[section~3.10]{Michor98} they found that the simplest such metric has vanishing geodesic distance; 
this is the metric induced by $L^2(\text{arc length})$ on $\Imm(S^1,\mathbb R^2)$:
\begin{align*}
G^0_f(h,k) &= \int_{S^1} \langle h(\th),k(\th) \rangle |f'(\th)|\,d\th , 
\\
f &\in \Imm(S^1,\mathbb R^2), \quad h,k \in C^\infty(S^1,\mathbb R^2) = 
T_f\Imm(S^1,\mathbb R^2).
\end{align*}
In \cite{Michor102} they  found that the vanishing geodesic distance phenomenon for the 
$L^2$-metric occurs also in 
the more general shape space $\Imm(M,N)/\on{Diff}(M)$ where $S^1$ is replaced by a compact 
manifold $M$ and Euclidean $\mathbb R^2$ is replaced by 
Riemannian manifold $N$; it also occurs on the full diffeomorphism group $\on{Diff}(N)$, but not on 
the subgroup $\on{Diff}(N,\vol)$ of volume preserving diffeomorphisms, where the geodesic 
equation for the $L^2$-metric is the Euler equation of an incompressible fluid. 
In \cite[sections 3, 4 and 5]{Michor107} three classes of metrics were investigated: Almost local metrics on planar curves, 
Sobolev metrics on planar curves, and metrics induced from Sobolev metrics on the diffeomorphism group of the plane. 
The results about almost local metrics from \cite[section~3]{Michor107} were generalized by the authors to the 
case of surfaces in \cite{Michor118}.

Now we take up the investigations from \cite[section~4]{Michor107}. The 
{\it immersion-Sobolev metric} 
considered there is
\begin{align*}
G^{\Imm,p}_f(h,k) &=
\int_{S^1} \big(\langle h,k \rangle + A.\langle D_s^p h,
  D_s^p k  \rangle \big). ds
\\&
= \int_{S^1}\langle L_p(h),k\rangle ds
 \qquad\text{  where }
\\
L_p(h) \text{ or } L_{p,f}(h) &= \big(I + (-1)^p A.D_s^{2p}\big)(h) \text{  and } 
D_s=\frac{\p_\th}{|f_\th|}.
\end{align*}
The interesting special case $p=1$ and $A \rightarrow \infty$ has been
studied in \cite{TrouveYounes2000, Younes1998} and 
in \cite{Michor111} where an isometry 
to an infinite dimensional Grassmannian with the Fubini-Study metric was described.  
In this case, the metric reduces to:
$$ G^{\Imm,1,\infty}_f(h,k) = \int_{S^1} \langle D_s(h), D_s(k)
\rangle.ds$$
The cases $p=1,2$ and $A \rightarrow \infty$ have also been treated in \cite{MennucciYezzi2008}, 
where estimates on the geodesic distance are proven and the metric completion of the space of curves 
is characterized. 

In this work we generalize the immersion-Sobolev metrics from \cite[section~4]{Michor107} 
to higher dimensions and to non-flat ambient space, namely to the shape space 
$B_i(M,N)= \Imm(M,N)/\on{Diff}(M)$ of surfaces of type $M$ in 
$N$; here $M$ is a compact orientable 
connected manifold of smaller dimension than $N$, for example a sphere $S^m, m<\dim(N)$.

\subsection{Riemannian metrics}\label{in:ri}
The tangent space $T_f \Imm(M,N)$ at an immersion $f$ consists 
of all vector fields along $f$: 
$$T_f \Imm(M,N) = \Ga(f^*TN) \cong \{h \in C^\infty(M,TN): \pi_{TN} \o h = f\}. $$
A Riemannian metric on $\Imm(M,N)$ is a family of
positive definite inner products $G_f(h,k)$ where 
$f \in \Imm(M,N)$ and $h,k \in T_f\Imm(M,N)$.
Each metric is {\it weak} in the sense that $G_f$, viewed as linear map from 
$T_f\Imm(M,N)$ into its dual consisting of distributional sections of $f^*TN$ is injective. 
(But it can never be surjective.)
We require that our
metrics will be invariant under the action of $\on{Diff}(M)$, hence the
quotient map dividing by this action will be a Riemannian submersion. 
This means that the tangent map of the quotient map $\Imm(M,N)\to B_i(M,N)$ is 
a metric quotient mapping between all tangent spaces.
Thus we will get Riemannian metrics on $B_i$.
For any $f\in \Imm(M,N)$ those vectors in 
$T_f\Imm(M,N)$ which are $G_f$-perpendicuar to the 
$\on{Diff}(M)$-orbit through $f$ are called 
\emph{horizontal} (with respect to $G$). 
They form the $G_f$-orthogonal space to the orbit. A priori we do not 
know that it is a complementary space. For the metrics considered in this work it will turn out to be 
a complement. 

The simplest inner product on the tangent bundle to 
$\Imm(M,N)$ is
$$ G^0_f(h,k) = \int_{M} \g(h, k) \, \vol(f^*\g),$$
where $\g=\langle\quad,\quad \rangle$ is the Euclidean inner product on $N$.
Since the volume form $\vol(f^*\g)$ reacts equivariantly to the action of the 
group $\Diff(M)$, this metric is invariant, 
and the map to the quotient $B_i$ is a Riemannian 
submersion for this metric. 
The $G^0$-horizontal vectors in $T_f\Imm(M,N)$ are just those vector fields along $f$ 
which are pointwise $\g$-normal to $f(M)$; we will call them {\it normal} fields. 

All of the metrics we will look at will be of the form (see section \ref{so}):
$$ G^P_f(h,k) = \int_{M} \g( P^f h, k)\, \vol(f^*\g)$$
where $P^f:T_f\Imm \to T_f\Imm$ is a positive bijective operator depending smoothly on $f$,
which is selfadjoint unbounded in the Hilbert space $T_f\Imm$ with inner product $G^0_f$.
We will assume that $P$ is in addition equivariant with respect to reparametrizations, i.e.
$$P^{f\circ\ph}=\ph^* \o P^f \o (\ph\i)^* = \ph^*(P^f)\qquad \text{for all }\ph\in\on{Diff}(M).$$
The $G^P$-horizontal vectors will be those 
$h\in T_f\on{Emb}(M,N)=C^\infty(M,N)$ such that $P^fh$ is normal.

The tangent map of the quotient map $\on{Emb}(M,N)\to B_i(M,N)$ is then an isometry 
when restricted to the horizontal spaces, just as in the finite dimensional situation. 
Riemannian submersions have a very nice effect on geodesics: the geodesics on
the quotient space $B_i$ are exactly the images of the horizontal geodesics on the top
space $\Imm$; by a horizontal geodesic we mean a geodesic whose tangent lies 
in the horizontal bundle. 
The induced metric is invariant under the action of $\on{Diff}(M)$ and therefore induces 
a unique metric on $B_i$. 
See for example \cite[section~1]{Michor118}.
Later in section \ref{la} we 
shall consider the special case 
$P^f=1+A\Delta^p$. 

\subsection{Inner versus outer metrics}

The metrics studied in this work are induced from $\Imm(M,N)$ on shape space.
One might call them \emph{inner metrics} since the differential operator governing the metric 
is defined intrinsic to $M$. Intuitively, these metrics can be seen as describing some elastic 
or viscous behaviour of the shape itself. 

In contrast to these metrics, there are also metrics induced from $\on{Diff}(N)$ on shape space. 
(The widely used LDDMM algorithm uses such a metric.)
The differential operator governing these metrics is defined on all of $N$, even outside of the shape. 
Intuitively, these metrics can be seen as describing some elastic or viscous behaviour of the
ambient space $N$ that gets deformed as the shape changes. 
One might call these metrics \emph{outer metrics}.

\subsection{Contributions of this work.}

\begin{itemize}
\item This work is the first to treat Sobolev inner metrics on spaces of immersed
surfaces and on higher dimensional shape spaces. 

\item It contains the first description of how the geodesic equation can be 
formulated in terms of gradients of the metric with respect to itself 
when the ambient space is not flat. 
To achieve this, a covariant derivative on some bundles over immersions is defined. 
This covariant derivative is induced from the Levi-Civita covariant derivative on ambient space. 

\item The geodesic equation is formulated in terms of this covariant derivative. 
Well-posedness of the geodesic equation is shown under some
regularity assumptions that are verified for Sobolev metrics. 
Well-posedness also follows for the geodesic equation on 
diffeomorphism groups, where this result has not yet been obtained in that full generality.

\item To derive the geodesic equation, a variational formula for the Laplacian operator is developed.
The variation is taken with respect to the metric on the manifold where the Laplacian is 
defined. This metric in turn depends on the immersion inducing it. 

\item It is shown that Sobolev inner metrics separate points in shape space when the 
order of the differential operator governing the metric is high enough.
(The metric needs to be as least as strong as the $H^1$-metric.)
Thus Sobolev inner metrics overcome the degeneracy of the $L^2$-metric. 

\item The path-length distance of Sobolev inner metrics is compared to the Fr\'echet distance. 
It would be desirable to bound F\'echet distance by some Sobolev distance. 
This however remains an open problem. 

\item Finally it is demonstrated in some examples that the geodesic equation 
for the $H^1$-metric on shape space of surfaces in $\R^3$ can be solved numerically. 
\end{itemize}
Big parts of this work can also be found, partly in more details, in the doctoral theses of Martin Bauer
\cite{Bauer2010} and Philipp Harms \cite{Harms2010}. 

\section{Content of this work}

This work progresses from a very general setting to a specific one in three steps. 
In the beginning, a framework for general inner metrics is developed. 
Then the general concepts carry over to more and more specific inner metrics. 
\begin{itemize}
\item First, shape space is endowed with a \emph{general inner metric}, i.e with a metric
that is induced from a metric on the space of immersions, but that is unspecified otherwise.
It is shown how various versions of the geodesic equation can be expressed using gradients of the 
metric with respect to itself and how conserved quantities arise from symmetries. 
(This is section~\ref{sh}.)
\item Then it is assumed that the inner metric is defined via an elliptic pseudo-differential operator. 
Such a metric will be called a \emph{Sobolev-type metric}. 
The geodesic equation is formulated in terms of the operator, and
existence of horizontal paths of immersions within each equivalence class of paths is proven. 
(This is section~\ref{so}.)
Then estimates on the path-length distance are derived. Most importantly it is shown that
when the operator involves high enough powers of the Laplacian, then the metric does not have 
the degeneracy of the $L^2$-metric. (This is section~\ref{ge}.)
\item Motivated by the previous results it is assumed that the elliptic pseudo-differential operator 
is given by the \emph{Laplacian} and powers of it.
Again, the geodesic equation is derived. The formulas that are obtained are ready to be implemented numerically. 
(This is section~\ref{la}.)
\end{itemize}
The remaining sections cover the following material: 
\begin{itemize}
\item Section~\ref{no} treats some \emph{differential geometry of surfaces} that is needed in
this work. It is also a good reference for the notation that is used.
The biggest emphasis is on a rigorous treatment of the covariant derivative. 
Some material like the adjoint covariant derivative is not found in standard text books. 
\item Section~\ref{va} contains formulas for the \emph{variation} of the metric, volume form, 
covariant derivative and Laplacian with respect to the immersion inducing them.
These formulas are used extensively later. 
\item Section~\ref{su} covers the special case of \emph{flat ambient space}. 
The geodesic equation is simplified and conserved momenta for the Euclidean motion group
are calculated. Sobolev-type metrics are compared to the Fr\'echet metric which is 
available in flat ambient space. 
\item Section~\ref{di} treats \emph{diffeomorphism groups} of compact manifolds as a 
special case of the theory that has been developed so far.
\item In section~\ref{nu} it is shown in some examples that the geodesic equation 
on shape space can be solved \emph{numerically}. 
\end{itemize}

\section[Notation]{Differential geometry of surfaces and notation}\label{no}

In this section the differential geometric tools 
that are needed to deal with immersed surfaces
are presented and developed. 
The most important point is a rigorous treatment of the covariant derivative 
and related concepts. 

The notation of \cite{MichorH} is used. 
Some of the definitions can also be found in \cite{Kobayashi1996a}.
A similar exposition in the same notation is \cite{Michor118}.

\subsection{Basic assumptions and conventions}\label{no:as}

\begin{ass*}
It is always assumed that $M$ and $N$ are connected manifolds 
of finite dimensions $m$ and $n$, respectively. 
Furthermore it is assumed that $M$ is compact, 
and that $N$ is endowed with a Riemannian metric $\g$.
\end{ass*}

In this work, \emph{immersions} of $M$ into $N$ will be treated, i.e. 
smooth functions $M \to N$ with injective tangent mapping at every point.
The set of all such immersions will be denoted by $\Imm(M,N)$.
It is clear that only the case $\dim(M) \leq \dim(N)$ is of interest 
since otherwise $\Imm(M,N)$ would be empty.

Immersions or paths of immersions are usually denoted by $f$. 
Vector fields on $\Imm(M,N)$ or tangent vectors with foot point $f$, i.e., vector fields along $f$, will be called $h,k,m$, for example. 
Subscripts like $f_t = \p_t f = \p f/\p t$ denote differentiation 
with respect to the indicated variable, but subscripts are also used to indicate the 
foot point of a tensor field.

\subsection{Tensor bundles and tensor fields}\label{no:te}

The \emph{tensor bundles}
\begin{equation*}\xymatrix{
T^r_s M \ar[d] & 
T^r_s M \otimes f^*TN \ar[d] \\
M & M 
}\end{equation*}
will be used. Here $T^r_sM$ denotes the bundle of 
$\left(\begin{smallmatrix}r\\s\end{smallmatrix}\right)$-tensors on $M$, i.e.
$$T^r_sM=\bigotimes^r TM \otimes \bigotimes^s T^*M,$$
and $f^*TN$ is the pullback of the bundle $TN$ via $f$, see \cite[section~17.5]{MichorH}. 
A \emph{tensor field} is a section of a tensor bundle. Generally, when $E$ is a bundle, 
the space of its sections will be denoted by $\Ga(E)$. 

To clarify the notation that will be used later, 
some examples of tensor bundles and tensor fields are given now.
$S^k T^*M = L^k_{\on{sym}}(TM; \R)$ and
$\La^k T^*M = L^k_{\on{alt}}(TM; \R)$
are the bundles of symmetric and alternating 
$\left(\begin{smallmatrix}0\\k\end{smallmatrix}\right)$-tensors, respectively. 
$\Om^k(M)=\Ga(\La^k T^*M)$ is the space of differential forms,  
$\X(M)=\Ga(TM)$ is the space of vector fields, and 
$$\Ga(f^*TN) \cong \big\{ h \in C^\infty(M,TN): \pi_N \o h = f \big\}$$ 
is the space of \emph{vector fields along $f$}.

\subsection{Metric on tensor spaces}\label{no:me}

Let $\g \in \Gamma(S^2_{>0} T^*N)$ denote a fixed Riemannian metric on $N$. 
The \emph{metric induced on $M$ by $f \in \Imm(M,N)$} is the pullback metric 
\begin{align*}
g=f^*\g \in \Gamma(S^2_{>0} T^*M), \qquad g(X,Y)=(f^*\g)(X,Y) = \g(Tf.X,Tf.Y),
\end{align*}
where $X,Y$ are vector fields on $M$.
The dependence of $g$ on the immersion $f$ should be kept in mind.
Let $$\flat = \check g: TM \to T^*M \quad \text{and} \quad \sharp=\check g\i: T^*M \to TM.$$ 
$g$ can be extended to the cotangent bundle $T^*M=T^0_1M$ by setting
$$g\i(\alpha,\beta)=g^0_1(\alpha,\beta)=\alpha(\beta^\sharp)$$
for $\alpha,\beta \in T^*M$. 
The product metric 
$$g^r_s = \bigotimes^r g \otimes \bigotimes^s g\i$$
extends $g$ to all tensor spaces $T^r_s M$, and 
$g^r_s \otimes \g$ yields a metric on $T^r_s M \otimes f^*TN$. 

\subsection{Traces}\label{no:tr}

The \emph{trace} contracts pairs of vectors and co-vectors in a tensor product: 
\begin{align*}
\Tr:\; T^*M \otimes TM = L(TM,TM) \to M \x \R
\end{align*}
A special case of this is the operator
$i_X$ inserting a vector $X$ into a co-vector or into a covariant factor of a tensor product.
The inverse of the metric $g$ can be used to define a trace 
$$\Tr^g: T^*M \otimes T^*M \to M \x \R$$
contracting pairs of co-vecors.
Note that $\Tr^g$ depends on the metric whereas $\Tr$ does not. 
The following lemma will be useful in many calculations: 

\begin{lem*}
\begin{equation*}
g^0_2(B,C)= \on{Tr}(g\i B g\i C) \quad \text{for $B,C \in T^0_2M$ if $B$ or $C$ is symmetric.}
\end{equation*}
(In the expression under the trace, $B$ and $C$ are seen as maps $TM \to T^*M$.)
\end{lem*}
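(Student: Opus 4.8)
The plan is to reduce the identity to a one-line computation on decomposable tensors, after which the role of the symmetry hypothesis becomes visible. First I would fix the two conventions that the statement relies on: a $(0,2)$-tensor $B$ is regarded as the map $TM\to T^*M$, $X\mapsto B(X,\cdot)$, and the product metric is $g^0_2=g\i\otimes g\i$, which pairs two such tensors by contracting their first slots with each other and their second slots with each other via $g\i$. Since both sides of the claimed equality are bilinear in the pair $(B,C)$, it is enough to check it for decomposable tensors $B=\alpha\otimes\beta$ and $C=\gamma\otimes\delta$ with $\alpha,\beta,\gamma,\delta\in T^*M$ (equivalently, one can simply compute in local coordinates).

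For the left-hand side the definition of the product metric gives
$$g^0_2(\alpha\otimes\beta,\gamma\otimes\delta)=g\i(\alpha,\gamma)\,g\i(\beta,\delta).$$
For the right-hand side I would rewrite $g\i B$ as an endomorphism of $TM$: as a map $TM\to T^*M$ the tensor $\alpha\otimes\beta$ is $X\mapsto\alpha(X)\beta$, so $g\i B=\sharp\o B$ is $X\mapsto\alpha(X)\beta^\sharp$, i.e.\ the rank-one endomorphism $\beta^\sharp\otimes\alpha$; similarly $g\i C=\delta^\sharp\otimes\gamma$. Composing these and using $\alpha(\delta^\sharp)=g\i(\alpha,\delta)$ produces the rank-one endomorphism $g\i(\alpha,\delta)\,\beta^\sharp\otimes\gamma$, whose trace is $\gamma(\beta^\sharp)=g\i(\beta,\gamma)$. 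Thus
$$\Tr(g\i B\,g\i C)=g\i(\alpha,\delta)\,g\i(\beta,\gamma).$$

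Comparing the two displays makes the mechanism transparent: $g^0_2$ contracts first slot against first slot and second against second, whereas the matrix trace contracts the first slot of $B$ against the \emph{second} slot of $C$ and vice versa. Invariantly this says $\Tr(g\i B\,g\i C)=g^0_2(B,C^\top)$, with $C^\top(X,Y)=C(Y,X)$, so the two sides agree precisely when $C=C^\top$, that is, when $C$ is symmetric. The case where $B$ is symmetric instead then follows from the cyclicity $\Tr(g\i B\,g\i C)=\Tr(g\i C\,g\i B)$ together with the symmetry $g^0_2(B,C)=g^0_2(C,B)$ of the metric. The only real content is this symmetry step; everything else is bookkeeping, and the one thing to be careful about is keeping the identification $TM\to T^*M$ fixed throughout, so that the crossed contraction delivered by the trace is matched up correctly with $g^0_2$.
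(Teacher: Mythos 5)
Your proof is correct, but it is organized differently from the paper's. The paper proves the lemma by a one-line computation in local coordinates: writing $g^0_2(B,C)=\sum_{ijkl} g^{ij}B_{ik}g^{kl}C_{jl}$ and then using the symmetry of $g$ together with $C_{jl}=C_{lj}$ to rearrange this sum into $\sum_{ijkl}g^{ji}B_{ik}g^{kl}C_{lj}=\Tr(g\i B g\i C)$; it then remarks that only the symmetry of $C$ was used, leaving the case of symmetric $B$ implicit. You instead work coordinate-free: by bilinearity you reduce to decomposable tensors, compute both sides there, and extract the unconditional invariant identity $\Tr(g\i B\,g\i C)=g^0_2(B,C^\top)$, from which the lemma follows when $C$ is symmetric, and you then dispose of the symmetric-$B$ case explicitly via cyclicity of the trace and symmetry of $g^0_2$. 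The paper's route is shorter; yours is slightly more informative, since the transpose identity makes visible exactly why some symmetry hypothesis is indispensable, and it handles both halves of the hypothesis explicitly rather than by analogy. Two small phrasing points, neither a gap: the opening claim that ``it is enough to check it for decomposable tensors'' should really refer to the unconditional transpose identity (decomposable tensors are not symmetric, so the conditional statement itself is not what gets checked there) --- which is in fact what your argument then does; and ``the two sides agree precisely when $C=C^\top$'' is an overstatement for a fixed pair $(B,C)$ (they can agree accidentally, e.g.\ for $B=0$), though only the direction you actually use is needed.
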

\begin{proof}
Express everything in a local coordinate system $u^1, \ldots, u^{m}$ of $M$.
\begin{align*}
g^0_2(B,C)&=g^0_2\Big(\sum_{ik} B_{ik}du^i \otimes du^k,\sum_{jl}C_{jl}du^j \otimes du^l\Big) \\ & =
\sum_{ijkl} g^{ij}B_{ik}g^{kl}C_{jl} = \sum_{ijkl} g^{ji}B_{ik}g^{kl}C_{lj} 
= \on{Tr}(g\i B g\i C)
\end{align*}
Note that only the symmetry of $C$ has been used. 
\end{proof}

\subsection{Volume density}\label{no:vo}

Let $\Vol(M)$ be the \emph{density bundle} over $M$, see \cite[section~10.2]{MichorH}.
The \emph{volume density} on $M$ induced by $f \in \Imm(M,N)$ is 
$$\vol(g)=\vol(f^*\g) \in \Ga\big(\Vol(M)\big).$$
The \emph{volume} of the immersion is given by
$$\Vol(f)=\int_M \vol(f^*\g)=\int_M \vol(g).$$
The integral is well-defined since $M$ is compact. If $M$ is oriented the volume 
density may be identified with a differential form.

\subsection{Metric on tensor fields}\label{no:me2}

A \emph{metric on a space of tensor fields} is defined by integrating the appropriate metric on the 
tensor space with respect to the volume density:
$$\widetilde{g^r_s}(B,C)=\int_M g^r_s\big(B(x),C(x)\big)\vol(g)(x)$$
for $B,C \in \Gamma(T^r_sM)$, and
$$\widetilde{g^r_s \otimes \g}(B,C) = \int_M g^r_s\otimes \g \big(B(x),C(x)\big)\vol(g)(x)$$
for $B,C \in \Gamma(T^r_sM \otimes f^*TN)$, $f \in \Imm(M,N)$. 
The integrals are well-defined because $M$ is compact.

\subsection{Covariant derivative}\label{no:co}

Covariant derivatives on vector bundles as explained in \cite[sections 19.12, 22.9]{MichorH}
will be used.
Let $\nabla^g, \nabla^{\g}$ be the \emph{Levi-Civita covariant derivatives} on $(M,g)$
and $(N,\g)$, respectively. 
For any manifold $Q$ and vector field $X$ on $Q$, one has
\begin{align*}
\nabla^g_X:C^\infty(Q,TM) &\to C^\infty(Q,TM), & h &\mapsto \nabla^g_X h \\
\nabla^{\g}_X: C^\infty(Q,TN) &\to C^\infty(Q,TN), & h &\mapsto \nabla^{\g}_X h.
\end{align*}
Usually the symbol $\nabla$ will be used for all covariant derivatives.
It should be kept in mind that $\nabla^g$ depends on the metric $g=f^*\g$ and therefore also on 
the immersion $f$.
The following properties hold \cite[section~22.9]{MichorH}:
\begin{enumerate}
\item \label{no:co:ba}
$\nabla_X$ respects base points, i.e. 
$\pi \o \nabla_X h = \pi \o h$, where $\pi$ is the projection 
of the tangent space onto the base manifold. 
\item
$\nabla_X h$ is $C^\infty$-linear in $X$. So for a tangent vector $X_x \in T_xQ$, 
$\nabla_{X_x}h$ makes sense and equals $(\nabla_X h)(x)$.
\item
$\nabla_X h$ is $\R$-linear in $h$.
\item
$\nabla_X (a.h) = da(X).h + a.\nabla_X h$ for $a \in C^\infty(Q)$, the derivation property of $\nabla_X$.
\item \label{no:co:prop5}
For any manifold $\widetilde Q$ and smooth mapping 
$q:\widetilde Q \to Q$ and $Y_y \in T_y \widetilde Q$ one has
$\nabla_{Tq.Y_y}h=\nabla_{Y_y}(h \o q)$. If $Y \in \X(Q_1)$ and $X \in \X(Q)$ are $q$-related, then 
$\nabla_Y(h \o q) = (\nabla_X h) \o q$.
\end{enumerate}
The two covariant derivatives $\nabla^g_X$ and $\nabla^{\g}_X$ 
can be combined to yield a covariant derivative $\nabla_X$ acting on
$C^\infty(Q,T^r_sM \otimes TN)$ by additionally requiring the following properties 
\cite[section 22.12]{MichorH}:
\begin{enumerate}
\setcounter{enumi}{5} 
\item $\nabla_X$ respects the spaces $C^\infty(Q,T^r_sM \otimes TN)$. 
\item $\nabla_X(h \otimes k) = (\nabla_X h) \otimes k + h \otimes (\nabla_X k)$, 
a derivation with respect to the tensor product.
\item $\nabla_X$ commutes with any kind of contraction (see \cite[section 8.18]{MichorH}). 
A special case of this is
$$\nabla_X\big(\alpha(Y)\big)=(\nabla_X \alpha)(Y)+\alpha(\nabla_X Y) \quad 
\text{for } \alpha\otimes Y :N \to T^1_1M.$$
\end{enumerate}
Property \eqref{no:co:ba} is important because it implies that $\nabla_X$ 
respects spaces of sections of bundles. 
For example, for $Q=M$ and $f \in C^\infty(M,N)$, one gets
$$\nabla_X : \Ga(T^r_s M \otimes f^* TN) \to \Ga(T^r_s M \otimes f^* TN). $$

\subsection{Swapping covariant derivatives}\label{no:sw}

Some formulas allowing to swap covariant derivatives will be used repeatedly. 
Let $f$ be an immersion, $h$ a vector field along $f$ and $X,Y$ vector fields on $M$. 
Since $\nabla$ is torsion-free, one has \cite[section~22.10]{MichorH}:
\begin{equation}\label{no:sw:to}
\nabla_X Tf.Y-\nabla_Y Tf.X -Tf.[X,Y] = \on{Tor}(Tf.X,Tf.Y) = 0.
\end{equation}
Furthermore one has \cite[section~24.5]{MichorH}:
\begin{equation}\label{no:sw:r}
\nabla_X \nabla_Y h - \nabla_Y \nabla_X h - \nabla_{[X,Y]} h 
= R^{\g} \o (Tf.X,Tf.Y) h, 
\end{equation}
where $R^{\g} \in \Om^2\big(N;L(TN,TN)\big)$ is the Riemann curvature tensor of $(N,\g)$.

These formulas also hold when $f:\R \x M \to N$ is a path of immersions, 
$h:\R \x M \to TN$ is a vector field along $f$ and
the vector fields are vector fields on $\R \x M$. 
A case of special importance is when one of the vector fields is $(\p_t,0_M)$ and the 
other $(0_\R,Y)$, where $Y$ is a vector field on $M$. 
Since the Lie bracket of these vector fields vanishes, 
\eqref{no:sw:to} and \eqref{no:sw:r} yield
\begin{equation}\label{no:sw:to2}
\nabla_{(\p_t,0_M)} Tf.(0_{\R},Y)-\nabla_{(0_{\R},Y)} Tf.{(\p_t,0_M)} = 0
\end{equation}
and
\begin{equation}\label{no:sw:r2}
\nabla_{(\p_t,0_M)} \nabla_{(0_\R,Y)} h - \nabla_{(0_\R,Y)} \nabla_{(\p_t,0_M)} h
\\= R^{\g} \big(Tf.(\p_t,0_M),Tf.(0_\R,Y)\big) h .
\end{equation}

\subsection{Second and higher covariant derivatives}\label{no:co2}

When the covariant derivative is seen as a mapping
$$\nabla: \Gamma(T^r_s M) \to \Gamma(T^r_{s+1}M)\quad \text{or} \quad
\nabla : \Gamma(T^r_sM \otimes f^*TN) \to \Gamma(T^r_{s+1}M \otimes f^*TN),$$
then the \emph{second covariant derivative} is simply $\nabla\nabla=\nabla^2$.
Since the covariant derivative commutes with contractions,
$\nabla^2$ can be expressed as
$$\nabla^2_{X,Y} :=\iota_Y \iota_X \nabla^2 =
\iota_Y \nabla_X \nabla =
\nabla_X\nabla_Y -\nabla_{\nabla_XY} \qquad \text{for $X,Y\in \X(M)$.}$$
Higher covariant derivates are defined accordingly as $\nabla^k$, $k \geq 0$. 

\subsection{Adjoint of the covariant derivative}\label{no:co*}

The covariant derivative
$$\nabla: \Gamma(T^r_sM) \to  \Gamma( T^r_{s+1}M)$$
admits an \emph{adjoint} 
$$\nabla^*:\Gamma( T^r_{s+1}M)\to \Gamma(T^r_sM)$$
with respect to the metric $\widetilde{g}$, i.e.: 
$$\widetilde{g^r_{s+1}}(\nabla B, C)= \widetilde{g^r_s}(B, \nabla^* C).$$
In the same way, $\nabla^*$ can be defined when $\nabla$ is acting on 
$\Ga(T^r_s M \otimes f^*TN)$.
In either case it is given by
$$\nabla^*B=-\on{Tr}^g(\nabla B), $$
where the trace is contracting the first two tensor slots of $\nabla B$. 
This formula will be proven now: 

\begin{proof}
The result holds for decomposable tensor fields $\be \otimes B \in \Ga(T^r_{s+1} M)$ since
\begin{align*} &
\widetilde {g^r_s}\Big(\nabla^*(\be \otimes B),C\Big) = 
\widetilde {g^r_{s+1}}\Big(\be \otimes B,\nabla C\Big) = 
\widetilde {g^r_{s}}\Big(B,\nabla_{\be^\sharp} C\Big) \\&\qquad= 
\int_M \L_{\be^\sharp} g^r_{s}(B, C) \vol(g) - \int_M g^r_s(\nabla_{\be^\sharp} B,C) \vol(g) 
\\&\qquad= 
\int_M -g^r_{s}(B, C) \L_{\be^\sharp} \vol(g) - \int_M g^r_s\big(\Tr^g(\be \otimes \nabla B),C\big) \vol(g) 
\\&\qquad= 
\widetilde {g^r_{s}}\Big(-\on{div}(\be^\sharp) B - \Tr^g(\be \otimes \nabla B),C\Big) \\&\qquad= 
\widetilde {g^r_{s}}\Big(-\on{div}(\be^\sharp) B + \Tr^g((\nabla\be) \otimes B) 
-\Tr^g(\nabla (\be \otimes B))  ,C\Big)
\\&\qquad= 
\widetilde {g^r_{s}}\Big(-\on{div}(\be^\sharp) B + \Tr^g(\nabla\be) B 
-\Tr^g(\nabla (\be \otimes B))  ,C\Big)\\&\qquad= 
\widetilde {g^r_{s}}\Big(0-\Tr^g(\nabla (\be \otimes B)), C\Big)
\end{align*}
Here it has been used that $\nabla_X g=0$, 
that $\nabla_{X}$ commutes with any kind of contraction and acts as a derivation 
on tensor products \cite[section~22.12]{MichorH}
and that $\on{div}(X) = \Tr(\nabla X)$ for all vector fields $X$ \cite[section~25.12]{MichorH}.
To prove the result for $\be \otimes B \in \Ga(T^r_{s+1} M \otimes f^*TN)$ one simply has
to replace $g^r_{s}$ by $g^r_{s} \otimes \g$. 
\end{proof}

\subsection{Laplacian}\label{no:la}

The definition of the Laplacian used in this work is the \emph{Bochner-Laplacian}. 
It can act on all tensor fields $B$ and is defined as
$$\Delta B = \nabla^*\nabla B = - \on{Tr}^g(\nabla^2 B).$$

\subsection{Normal bundle}\label{no:no}

The \emph{normal bundle} $\Nor(f)$ of an immersion $f$ is a sub-bundle of $f^*TN$ 
whose fibers consist of all vectors that are orthogonal to the image of $f$:
$$\Nor(f)_x = \big\{ Y \in T_{f(x)}N : \forall X \in T_xM : \g(Y,Tf.X)=0  \big\}.$$
If $\dim(M)=\dim(N)$ then the fibers of the normal bundle are but the zero vector. 
Any vector field $h$ along $f \in \Imm$ can be decomposed uniquely 
into parts {\it tangential} and {\it normal} to $f$ as
$$h=Tf.h^\top + h^\bot,$$ 
where $h^\top$ is a vector field on $M$ and $h^\bot$ is a section of the normal bundle $\Nor(f)$.

\subsection{Second fundamental form and Weingarten mapping}\label{no:we}

Let $X$ and $Y$ be vector fields on $M$. 
Then the covariant derivative $\nabla_X Tf.Y$ splits into tangential and a normal parts as
$$\nabla_X Tf.Y=Tf.(\nabla_X Tf.Y)^\top + (\nabla_X Tf.Y)^\bot = Tf.\nabla_X Y + S(X,Y).$$
$S$ is the \emph{second fundamental form of $f$}. 
It is a symmetric bilinear form with values in the normal bundle of $f$. 
When $Tf$ is seen as a section of $T^*M \otimes f^*TN$ one has $S=\nabla Tf$ since
$$S(X,Y) = \nabla_X Tf.Y - Tf.\nabla_X Y = (\nabla Tf)(X,Y).$$
The trace of $S$ is the \emph{vector valued mean curvature} $\Tr^g(S) \in \Ga\big(\Nor(f)\big)$.

\section{Shape space}\label{sh}

Briefly said, in this work the word shape means an \emph{unparametrized surface}.
(The term surface is used regardless of whether it has dimension two or not.)  
This section is about the infinite dimensional space of all shapes. 
First some spaces of parametrized and unparametrized surfaces are described, 
and it is shown how to define Riemannian metrics on them. 
The geodesic equation and conserved quantities arising from symmetries are derived.

The agenda that is set out in this section
will be pursued in section~\ref{so} when the arbitrary metric is replaced
by a Sobolev-type metric involving a pseudo-differential operator and later in 
section~\ref{la} when the pseudo-differential operator is replaced by
an operator involving powers of the Laplacian.

\subsection{Riemannian metrics on immersions}\label{sh:im}\label{sh:na}\label{sh:ri}

The space of smooth immersions of the manifold $M$ into the manifold $N$ 
will be denoted by $\Imm(M,N)$ or briefly $\Imm$. It is a smooth Fr\'echet manifold containing
the space $\Emb(M,N)$ of embeddings of $M$ into $N$ as an open subset \cite[theorem~44.1]{MichorG}.
Consider the following \emph{natural bundles of $k$-multilinear mappings}: 
\begin{equation*}\xymatrix{
L^k(T\Imm;\R) \ar[d] & L^k(T\Imm;T\Imm ) \ar[d] \\
\Imm & \Imm
}\end{equation*}
These bundles are isomorphic to the bundles
\begin{equation*}\xymatrix{
L\left(\widehat\bigotimes^k T\Imm;\R\right)\ar[d] & 
L\left(\widehat\bigotimes^k T\Imm;T\Imm\right)\ar[d]\\
\Imm & \Imm
}\end{equation*}
where $\widehat\bigotimes$ denotes the $c^\infty$-completed bornological tensor product of 
locally convex vector spaces \cite[section~5.7, section~4.29]{MichorG}.
Note that $L(T\Imm;T\Imm)$ is not isomorphic to 
$T^*\Imm \;\widehat\otimes\; T\Imm$ 
since the latter bundle corresponds to multilinear mappings with finite rank.

It is worth to write down more explicitly what some of these bundles of multilinear mappings are.  
The \emph{tangent space to $\Imm$} is given by
\begin{align*}
T_f\Imm &= C^\infty_f(M,TN) := \big\{ h \in C^\infty(M,TN): \pi_N \o h =f\big\}, \\
T\Imm &= C^\infty_{\Imm}(M,TN) := \big\{ h \in C^\infty(M,TN): \pi_N \o h \in \Imm \big\}.
\end{align*}
Thus $T_f\Imm$ is the space of vector fields along the immersion $f$.
Now the \emph{cotangent space to $\Imm$} will be described.
The symbol $\widehat\otimes_{C^\infty(M)}$ means that the tensor product
is taken over the algebra $C^\infty(M)$.
\begin{align*}
T^*_f\Imm &= L(T_f\Imm;\R) = C^\infty_f(M,TN)' = 
C^\infty(M)'\; \widehat\otimes_{C^\infty(M)} C^\infty_f(M,T^*N) \\
T^*\Imm &= L(T\Imm;\R) = C^\infty(M)'\; \widehat\otimes_{C^\infty(M)} C^\infty_{\Imm}(M,T^*N) 
\end{align*}
The bundle $L^2_{\on{sym}}(T\Imm;\R)$ is of interest for the definition of a Riemannian 
metric on $\Imm$. 
(The subscripts $_\sym$ and $_\alt$ indicate symmetric and alternating multilinear maps, respectively.) 
Letting $\otimes_S$ denotes the symmetric tensor product and 
$\widehat\otimes_S$ the $c^\infty$-completed bornological symmetric tensor product, one has
\begin{align*}
L^2_{\on{sym}}(T_f\Imm;\R) &= (T_f\Imm\; \widehat\otimes_S\; T_f\Imm)' = 
\big(C^\infty_f(M,TN) \; \widehat\otimes_S\; C^\infty_f(M,TN)\big)' \\ &=
\big(C^\infty_f(M,TN \; \otimes_S\; TN) \big)' \\&=
C^\infty(M)' \;\widehat\otimes_{C^\infty(M)} C^\infty_f(M,T^*N \; \otimes_S\; T^*N) 
\\
L^2_{\on{sym}}(T\Imm;\R) &= 
C^\infty(M)' \;\widehat\otimes_{C^\infty(M)} C^\infty_{\Imm}(M,T^*N \;\otimes_S\; T^*N) 
\end{align*}
A \emph{Riemannian metric $G$ on $\Imm$} is a section of the bundle
$L^2_{\on{sym}}(T\Imm;\R)$
such that at every $f \in \Imm$, $G_f$ is a symmetric positive definite bilinear mapping 
$$G_f: T_f\Imm \x T_f\Imm \to \R.$$
Each metric is {\it weak} in the sense that $G_f$, seen as a mapping
$$G_f: T_f\Imm \to T^*_f\Imm$$
is injective. (But it can never be surjective.)

\subsection{Covariant derivative $\nabla^{\g}$ on immersions}\label{sh:cov}

The covariant derivative $\nabla^{\g}$ defined in section~\ref{no:co} induces a
\emph{covariant derivative over immersions} as follows.
Let $Q$ be a smooth manifold. Then one identifies
\begin{align*}
&h \in  C^\infty\big(Q,T\Imm(M,N)\big) && \text{and} && X \in \X(Q)
\intertext{with}
&h^{\wedge} \in C^\infty(Q \x M, TN) && \text{and} && (X,0_M) \in \X(Q \x M).
\end{align*}
As described in section~\ref{no:co} one has the covariant derivative
$$\nabla^{\g}_{(X,0_M)} h^{\wedge} \in C^\infty\big(Q \x M, TN).$$
Thus one can define
$$\nabla_X h = \left(\nabla^{\g}_{(X,0_M)} h^{\wedge}\right)^{\vee} \in C^\infty\big(Q,T\Imm(M,N)\big).$$
This covariant derivative is torsion-free by section~\ref{no:sw}, formula~\eqref{no:sw:to}. 
It respects the metric $\g$ but in general does not respect $G$. 

It is helpful to point out some special cases of how this construction can be used. 
The case $Q=\R$ will be important to formulate the geodesic equation. 
The expression that will be of interest in the formulation of the 
geodesic equation is $\nabla_{\p_t} f_t$, which is 
well-defined when $f:\R \to \Imm$ is a path of immersions and $f_t: \R \to T\Imm$ is its velocity. 

Another case of interest is $Q = \Imm$. Let $h, k, m \in \X(\Imm)$. Then the covariant 
derivative $\nabla_m h$ is well-defined and tensorial in $m$.
Requiring $\nabla_m$ to respect the grading of the spaces of multilinear maps, to act as a derivation 
on products and to commute with compositions of multilinear maps, one obtains 
as in section~\ref{no:co} a covariant 
derivative $\nabla_m$ acting on all mappings into 
the natural bundles of multilinear mappings over $\Imm$.
In particular, $\nabla_m P$ and $\nabla_m G$ are well-defined for 
\begin{align*}
P \in \Ga\big(L(T\Imm;T\Imm)\big), \quad
G \in \Ga\big(L^2_{\on{sym}}(T\Imm;\R)\big)
\end{align*}
by the usual formulas
\begin{align*}
(\nabla_m P)(h) &= \nabla_m\big(P(h)\big) - P(\nabla_mh), \\
(\nabla_m G)(h,k) &= \nabla_m\big(G(h,k)) - G(\nabla_m h,k) - G(h,\nabla_m k). 
\end{align*}

\subsection{Metric gradients}\label{sh:me}

The \emph{metric gradients} $H,K \in \Ga\big(L^2(T\Imm;T\Imm)\big)$ are uniquely defined by the equation
$$(\nabla_m G)(h,k)=G\big(K(h,m),k\big)=G\big(m,H(h,k)\big),$$
where $h,k,m$ are vector fields on $\Imm$ and 
the covariant derivative of the metric tensor $G$ is defined as in the previous section. 
(This is a generalization of the definition used in \cite{Michor107} 
that allows for a curved ambient space $N \neq \R^n$.)

Existence of $H, K$ has to proven case by case for each metric $G$, 
usually by partial integration. 
For Sobolev metrics, this will be proven in sections~\ref{la:ad} and \ref{la:ge}.

\begin{ass*}
Nevertheless it will be assumed for now that the metric gradients $H,K$ exist.
\end{ass*}

\subsection{Geodesic equation on immersions}\label{sh:ge}

\begin{thm*}
Given $H,K$ as defined in the previous section and $\nabla$ as defined in 
section~\ref{sh:cov}, the geodesic equation reads as
$$\nabla_{\p_t} f_t=\frac12 H_f(f_t,f_t)-K_f(f_t,f_t).$$
\end{thm*}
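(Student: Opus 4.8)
The plan is to derive the geodesic equation from the first variation of the energy functional, using the metric gradients $H, K$ to handle the $f$-dependence of the metric $G$. The energy of a path $f:[0,1]\to\Imm$ is $E(f)=\frac12\int_0^1 G_f(f_t,f_t)\,dt$, and geodesics are its critical points among paths with fixed endpoints. The key technical device is the covariant derivative $\nabla$ on immersions from section~\ref{sh:cov}, which is torsion-free and respects the ambient metric $\g$ but \emph{not} $G$; the failure to respect $G$ is precisely what the metric gradients $H,K$ encode.

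First I would introduce a variation $(s,t)\mapsto f(s,t)$ of the path with variation field $m=\p_s f$ vanishing at the endpoints $t=0,1$, and compute $\p_s E$. Differentiating under the integral and using that $\nabla$ is a derivation compatible with the tensorial structure, I would write
\begin{align*}
\p_s\,\tfrac12 G_f(f_t,f_t) &= \tfrac12(\nabla_m G)(f_t,f_t) + G(\nabla_m f_t, f_t).
\end{align*}
The first term is rewritten using the definition of the metric gradient $H$, namely $(\nabla_m G)(f_t,f_t)=G(m,H_f(f_t,f_t))$. For the second term I would use torsion-freeness of $\nabla$ applied to the commuting fields $\p_s$ and $\p_t$ (as in formula~\eqref{no:sw:to2}, with the roles of the two parameter directions) to swap $\nabla_m f_t=\nabla_{\p_s}f_t=\nabla_{\p_t}f_s=\nabla_{\p_t}m$, giving $G(\nabla_{\p_t}m, f_t)$.

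Next I would integrate by parts in $t$ on this term. Since $\nabla$ does not respect $G$, the derivation rule reads $\p_t\,G(m,f_t)=(\nabla_{\p_t}G)(m,f_t)+G(\nabla_{\p_t}m,f_t)+G(m,\nabla_{\p_t}f_t)$, so that
\begin{align*}
\int_0^1 G(\nabla_{\p_t}m,f_t)\,dt &= -\int_0^1\Big[(\nabla_{\p_t}G)(m,f_t)+G(m,\nabla_{\p_t}f_t)\Big]dt,
\end{align*}
the boundary term dropping because $m$ vanishes at $t=0,1$. The extra term $(\nabla_{\p_t}G)(m,f_t)$ is exactly $(\nabla_{f_t}G)(m,f_t)$ evaluated along the path, which by the \emph{other} metric gradient equals $G(m,K_f(f_t,f_t))$ after using the defining relation $(\nabla_m G)(h,k)=G(K(h,m),k)$ with a symmetry argument matching slots. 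Collecting all contributions, $\p_s E=\int_0^1 G\big(m,\tfrac12 H_f(f_t,f_t)-K_f(f_t,f_t)-\nabla_{\p_t}f_t\big)\,dt$, and since this vanishes for all admissible $m$ and $G$ is a weak metric (hence injective as a map to the cotangent space, so the integrand's $G$-pairing determines it), the Euler--Lagrange equation is the stated geodesic equation.

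\emph{The main obstacle} I anticipate is the careful bookkeeping of which metric gradient ($H$ versus $K$) arises in which term, and matching the argument slots correctly given that $H$ and $K$ are defined via different pairings of the same quantity $(\nabla_m G)(h,k)$. In particular one must verify that the term produced by integration by parts genuinely corresponds to $K_f(f_t,f_t)$ rather than some other contraction, which requires tracking the tensorial symmetry of $\nabla_m G$ in its $h,k$ arguments and the distinction in its $m$-slot. A secondary subtlety is justifying the interchange of the $s$- and $t$-covariant derivatives via torsion-freeness in the infinite-dimensional setting of $\Imm$, but this is licensed by section~\ref{sh:cov}, where $\nabla$ is shown torsion-free through formula~\eqref{no:sw:to}.
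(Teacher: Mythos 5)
Your proposal is correct and follows essentially the same route as the paper's proof: first variation of the energy, torsion-freeness of $\nabla$ to swap $\nabla_{\p_s}f_t=\nabla_{\p_t}f_s$, integration by parts via the product rule $\p_t\,G(f_s,f_t)=(\nabla_{f_t}G)(f_s,f_t)+G(\nabla_{\p_t}f_s,f_t)+G(f_s,\nabla_{\p_t}f_t)$, and identification of the resulting terms with $H$ and $K$ using the symmetry of $\nabla_m G$ in its two arguments. Your explicit remarks on the slot-matching for $K$ and on the weak nondegeneracy of $G$ in the final step are exactly the points the paper uses implicitly.
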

This is the same result as in \cite[section~2.4]{Michor107}, but in a more general setting.
\begin{proof}
Let $f: (-\ep,\ep) \x [0,1] \x M \to N$ be a one-parameter 
family of curves of immersions with fixed endpoints. 
The variational parameter will be denoted by $s \in (-\ep,\ep)$ and 
the time-parameter by $t \in [0,1]$. 
In the following calculation, let $G_f$ denote $G$ composed with $f$, i.e.
$$G_f: \R \to \Imm \to L^2_{\sym}(T\Imm;\R).$$ 
Remember that the covariant derivative on $\Imm$ that has been introduced in 
section~\ref{sh:cov} is torsion-free so that one has
$$\nabla_{\p_t}f_s - \nabla_{\p_s}f_t=Tf.[\p_t,\p_s]+\on{Tor}(f_t,f_s) = 0.$$
Thus the first variation of the energy of the curves is
\begin{align*}
\p_s \frac12 \int_0^1 G_f(f_t, f_t) dt &= 
\frac12 \int_0^1 (\nabla_{\p_s} G_f)(f_t, f_t)
+ \int_0^1 G_f(\nabla_{\p_s} f_t, f_t) dt 
\\&= 
\frac12 \int_0^1 (\nabla_{f_s} G)(f_t, f_t) 
+ \int_0^1 G_f(\nabla_{\p_t} f_s, f_t) dt 
\\&= 
\frac12 \int_0^1 (\nabla_{f_s} G)(f_t, f_t) dt
+ \int_0^1 \p_t\ G_f(f_s,f_t) dt \\&\qquad
- \int_0^1 (\nabla_{f_t} G)(f_s, f_t) dt 
- \int_0^1 G_f(f_s,\nabla_{\p_t} f_t) dt 
\\&=
\int_0^1 G\Big(f_s,\frac12 H(f_t,f_t)+0-K(f_t,f_t)-\nabla_{\p_t} f_t\Big) dt.
\end{align*}
If $f(0,\cdot,\cdot)$ is energy-minimizing, then one has at $s=0$ that
\begin{equation*}
\frac12 H(f_t,f_t)-K(f_t,f_t)-\nabla_{\p_t} f_t =0. \qedhere
\end{equation*}
\end{proof}

\subsection[Geodesic equation on immersions]%
{Geodesic equation on immersions in terms of the momentum}\label{sh:gemo}

In the previous section the geodesic equation for the velocity $f_t$ has been derived. 
In many applications it is more convenient to formulate the geodesic equation as an equation 
for the momentum $G(f_t,\cdot) \in T^*_f\Imm$. 
$G(f_t,\cdot)$ is an element of the \emph{smooth 
cotangent bundle}, also called \emph{smooth dual}, which is given by 
$$G(T\Imm) := \coprod_{f \in \Imm} G_f(T_f\Imm) = 
\coprod_{f \in \Imm} \{ G_f(h,\cdot): h \in T_f\Imm\} \subset T^*\Imm. $$
It is strictly smaller than $T^*\Imm$ since at every $f \in \Imm$
the metric $G_f: T_f\Imm \to T^*_f\Imm$ is injective but not surjective. It is called 
smooth since it does not contain distributional sections of $f^*TN$, whereas $T_f^*\Imm$ does. 

\begin{thm*}
The geodesic equation for the momentum $p \in T^*\Imm$ is given by
\begin{equation*}
\left\{\begin{aligned}
p &= G(f_t, \cdot) \\
\nabla_{\p_t} p &= \frac12 G_f\big( H(f_t,f_t),\cdot\big),
\end{aligned}\right.
\end{equation*}
where $H$ is the metric gradient defined in section~\ref{sh:me}
and $\nabla$ is the covariant derivative action on mappings into $T^*\Imm$ as 
defined in section~\ref{sh:cov}.
\end{thm*}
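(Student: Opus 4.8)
The plan is to differentiate the defining relation $p = G(f_t,\cdot)$ covariantly along the path and then to feed in the velocity form of the geodesic equation from section~\ref{sh:ge}. First I would regard the momentum as the contraction obtained by inserting the velocity $f_t$ into the first argument of the metric tensor $G \circ f \in C^\infty(\R, L^2_{\sym}(T\Imm;\R))$. Since the covariant derivative $\nabla$ on $\Imm$ introduced in section~\ref{sh:cov} extends to all natural bundles of multilinear maps, acts as a derivation on products, and commutes with contractions, I obtain
$$\nabla_{\p_t} p = G(\nabla_{\p_t} f_t, \cdot) + (\nabla_{\p_t} G)(f_t, \cdot).$$

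Next I would rewrite both terms. For the first I substitute the geodesic equation $\nabla_{\p_t} f_t = \frac12 H(f_t,f_t) - K(f_t,f_t)$ from section~\ref{sh:ge}, giving $G(\nabla_{\p_t}f_t, \cdot) = \frac12 G(H(f_t,f_t),\cdot) - G(K(f_t,f_t),\cdot)$. For the second term I use property~\eqref{no:co:prop5} (the chain rule for covariant derivatives) to identify $\nabla_{\p_t}(G\circ f)$ with $\nabla_{f_t} G$, the directional covariant derivative of the tensor field $G$ in the direction of the velocity. Then the first defining equation of the metric gradient in section~\ref{sh:me}, namely $(\nabla_m G)(h,k) = G(K(h,m),k)$, applied with $m = h = f_t$, yields $(\nabla_{\p_t} G)(f_t,\cdot) = G(K(f_t,f_t),\cdot)$.

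Adding the two contributions, the two $K$-terms cancel and I am left with $\nabla_{\p_t} p = \frac12 G(H(f_t,f_t),\cdot)$, which is the claimed equation. The converse direction is immediate: since $G_f$ is injective, the momentum equation together with the relation $p = G(f_t,\cdot)$ determines $\nabla_{\p_t}f_t$ and reproduces the velocity equation, so the two formulations are equivalent. No new estimates or regularity arguments are needed, since existence of $H$ and $K$ is assumed in section~\ref{sh:me}.

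I expect the only real subtlety to be bookkeeping rather than analysis. One must make sure that the single symbol $\nabla_{\p_t}$ is being used consistently as the covariant derivative on the pullback of the bundle $T^*\Imm$ along the path, and that the derivation-over-contraction rule invoked is exactly the one guaranteed by the construction in section~\ref{sh:cov}. In particular, the cancellation of the $K$-terms hinges on evaluating $(\nabla_{\p_t}G)(f_t,\cdot)$ through the first (the $K$-) form of the metric gradient identity rather than the second (the $H$-) form; picking the wrong representation is the natural place to go astray.
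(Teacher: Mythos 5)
Your proposal is correct and follows essentially the same route as the paper's proof: covariantly differentiate $p = G_f(f_t,\cdot)$ using the derivation property from section~\ref{sh:cov}, substitute the velocity form of the geodesic equation, and evaluate $(\nabla_{f_t}G)(f_t,\cdot)$ via the $K$-form of the metric gradient identity so that the $K$-terms cancel. Your closing remark about which form of the identity to use is exactly the point, and the paper's computation makes the same choice.
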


\begin{proof}
Let $G_f$ denote $G$ composed with the path $f:\R\to\Imm$, i.e.
$$G_f: \R \to \Imm \to L^2_{\sym}(T\Imm;\R).$$
Then one has
\begin{align*}
\nabla_{\p_t} p &= \nabla_{\p_t} \big(G_f(f_t,\cdot)\big) =
(\nabla_{\p_t}G_f)(f_t,\cdot) + G_f(\nabla_{\p_t} f_t,\cdot) \\&=
(\nabla_{f_t}G)(f_t,\cdot) + G_f\Big(\frac12 H(f_t,f_t) - K(f_t,f_t),\cdot\Big) \\&=
G_f\big(K(f_t,f_t),\cdot\big)+ G_f\Big(\frac12 H(f_t,f_t) - K(f_t,f_t),\cdot\Big) \qedhere
\end{align*}
\end{proof}
This equation is equivalent to \emph{Hamilton's equation} restricted to the smooth cotangent bundle:
\begin{equation*}
\left\{\begin{aligned}
p &= G(f_t, \cdot) \\
p_t &= (\grad^{\om} E)(p).
\end{aligned}\right.
\end{equation*} 
Here $\om$ denotes the restriction of the canonical symplectic form on $T^*\Imm$ to the smooth 
cotangent bundle and $E$ is the Hamiltonian
$$E: G(T\Imm) \to \R, \quad E(p) = G\i(p,p)$$
which is only defined on the smooth cotangent bundle.

\subsection{Shape space}\label{sh:sh}

$\Diff(M)$ acts smoothly on $\Imm(M,N)$ and $\Emb(M,N)$ 
by composition from the right. 
For $\Imm$, the action is given by the mapping
$$\Imm(M,N) \x \Diff(M) \to \Imm(M,N), \qquad (f,\ph) \mapsto r(f,\ph) = r^\ph(f)= f \o \ph.$$
The tangent prolongation of this group action is given by the mapping
$$T\Imm(M,N) \x \Diff(M) \to T\Imm(M,N), \qquad (h,\ph) \mapsto Tr^\ph(h) = h \o \ph.$$
\emph{Shape space} is defined as the orbit space with respect to this action. 
That means that in shape space, two mappings differing only in their parametrization
will be regarded the same. 

\begin{thm*}
Let $M$ be compact and of dimension $\leq n$.
Then $\Emb(M,N)$ is the total space of a smooth principal fiber bundle 
with structure group $\Diff(M)$, whose base manifold 
is a Hausdorff smooth Fr\'echet manifold denoted by
$$B_{e}(M,N) = \Emb(M,N)/\Diff(M).$$
However, the space
$$B_i(M,N) = \Imm(M,N)/\Diff(M)$$
is not a smooth manifold, but has singularities of orbifold type: 
Locally, it looks like a finite dimensional orbifold times an infinite dimensional 
Fr\'echet space. 
\end{thm*}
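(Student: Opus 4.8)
The plan is to construct local \emph{slices} by representing nearby immersions as normal graphs over a fixed one via the exponential map of $\g$, following Cervera--Mascar\'o--Michor \cite{Michor102}; the smooth manifold structures on $\Emb(M,N)$, $\Imm(M,N)$ and the smoothness of the $\Diff(M)$-action are taken from the convenient calculus of \cite{MichorG}.

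First I would treat embeddings. The action $r^\ph(f)=f\o\ph$ is free on $\Emb(M,N)$, since $f\o\ph=f$ with $f$ injective forces $\ph=\Id_M$. Fix $f_0\in\Emb$. A tubular neighborhood of $f_0(M)$ in $N$ is diffeomorphic, via the normal exponential map of $\g$, to a neighborhood of the zero section of $\Nor(f_0)$. For $f$ sufficiently close to $f_0$ the image $f(M)$ lies inside this tube, and composing the tube projection with $f_0\i$ produces a diffeomorphism $\ph_f\in\Diff(M)$ for which $f\o\ph_f\i$ is the normal graph $\exp^{\g}_{f_0}\!\o\,a$ of a unique small section $a\in\Ga(\Nor(f_0))$. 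The map $f\mapsto(a,\ph_f)$ is then a chart, the graphs $\mathcal S_{f_0}=\{\exp^{\g}_{f_0}\!\o\,a:a\in\Ga(\Nor(f_0))\text{ small}\}$ form a slice meeting each nearby orbit exactly once, and $(a,\ph)\mapsto(\exp^{\g}_{f_0}\!\o\,a)\o\ph$ locally trivializes $\Emb\to B_e$. This exhibits the quotient map as a principal $\Diff(M)$-bundle with Fr\'echet base modeled on the spaces $\Ga(\Nor(f_0))$; Hausdorffness follows once one checks that the $\Diff(M)$-action is proper, so that distinct orbits are separated by such slices.

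The immersion case differs only in that the isotropy group $\Ga_{f_0}=\{\ph:f_0\o\ph=f_0\}$ can be nontrivial, but it is always finite. Indeed, local injectivity of $f_0$ forces any $\ph\in\Ga_{f_0}$ with a fixed point to equal $\Id_M$ on a neighborhood, hence on the connected $M$; thus $\Ga_{f_0}$ acts freely. Since $f_0$ is locally injective and $M$ is compact, each fiber $f_0\i(f_0(x))$ is finite, and the free orbit $\Ga_{f_0}\cdot x$ sits inside it, so $\Ga_{f_0}$ is finite and $f_0$ factors through the free quotient covering $M\to M/\Ga_{f_0}$. The normal-graph slice $\mathcal S_{f_0}\cong\Ga(\Nor(f_0))$ persists, but now parametrizes nearby orbits only up to the residual \emph{linear} action of the finite group $\Ga_{f_0}$ on $\Ga(\Nor(f_0))$, given by pullback under $\ph\i$ composed with the induced normal-bundle isomorphism. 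Hence $B_i$ near $[f_0]$ is modeled on $\Ga(\Nor(f_0))/\Ga_{f_0}$, the quotient of a Fr\'echet space by a finite linear group, which is precisely an orbifold chart. Averaging to obtain a $\Ga_{f_0}$-invariant inner product and splitting off the fixed subspace (contributing a smooth Fr\'echet factor) from a complement (whose quotient carries the orbifold singularities) yields the stated local product description; the details are in \cite{Michor102}.

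The main obstacle is the slice construction itself: proving that for $f$ near $f_0$ the factorization $f=(\exp^{\g}_{f_0}\!\o\,a)\o\ph$ depends smoothly on $f$ and is unique. This rests on the smooth dependence of the tubular-neighborhood and exponential-map constructions in the convenient setting and on properness of the $\Diff(M)$-action; once these are in place, both the principal-bundle and the orbifold statements follow from the local models above.
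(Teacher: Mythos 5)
Your proposal is correct and takes essentially the same approach as the paper: the paper gives no argument of its own for this theorem, citing \cite{Michor40} for the immersion case and \cite[section~44.1]{MichorG} for the embedding case, and your normal-graph slice construction, the freeness of the $\Diff(M)$-action on $\Emb(M,N)$, and the finite-isotropy analysis for immersions are precisely the content of those references. In particular, your finiteness argument (an isotropy element $\ph$ with a fixed point must equal $\Id_M$ by local injectivity of $f_0$ and connectedness of $M$, so $\Ga_{f_0}$ injects into a finite fiber of $f_0$) is the same as the one in \cite{Michor40}.
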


The proof for immersions can be found in \cite{Michor40} and 
the one for embeddings in \cite[section~44.1]{MichorG}. 
As with immersions and embeddings, the notation $B_i, B_e$ will be used
when it is clear that $M$ and $N$ are the domain and target of the mappings.

\subsection{Riemannian metrics on shape space}\label{sh:rish}

We start with a metric $G$ on $\Imm$. 
The mapping $\pi:\Imm \rightarrow B_i$ is a submersion of smooth manifolds, 
that is, $T\pi:T\Imm \rightarrow TB_i$ is surjective.
$$V=V(\pi):=\on{ker}(T\pi) \subset T\Imm$$
is called the {\it vertical subbundle}. 
The {\it horizontal subbundle}
is the $G$-orthogonal subspace of $V$: 
$$\Hor=\Hor(\pi,G):=V(\pi)^\bot \subset T\Imm.$$
It need not be a complement to $V$ (recall that the metric is weak; the complement could be in a 
suitable completion of the tangent space).
For all metrics in this paper it will turn out to be a complement, however. 
Then any vector $h \in T\Imm$ can be decomposed uniquely in vertical and horizontal components as
$$h=h^{\on{ver}}+h^{\hor}.$$
This definition extends to the cotangent bundle as follows: 
An element of $T^*\Imm$ is called horizontal when it annihilates all vertical vectors, 
and vertical when it annihilates all horizontal vectors. 

In the setting described so far, the mapping 
\begin{equation*}
T_f \pi|_{\Hor_f}:\Hor_f\rightarrow T_{\pi(f)}B_i
\end{equation*}
is an isomorphism of vector spaces for all $f\in \Imm$. 
This isomorphism will be used to describe the tangent space to $B_i$. 
If both $\Imm$ and $B_i$ are Riemannian manifolds and
if this isomorphism is also an isometry for all $f\in \Imm$, 
then $\pi$ is called a {\it Riemannian submersion}.
In that case, the metric $G$ on $\Imm$ is $\Diff(M)$-invariant.
This means that $G=(r^\ph)^* G$ for all $\ph \in \Diff(M)$, where 
$r^\ph$ denotes the right action of $\ph$ on $\Imm$ that was described in section~\ref{sh:sh}. 
This condition can be spelled out in more details using the definition of $r^\ph$ as follows:
\begin{align*}
G_f(h,k)=\big((r^\ph)^* G\big)(h,k)
=G_{r^\ph(f)}\big(Tr^\ph(h),Tr^\ph(k)\big)
=G_{f \o \ph}(h \o \ph,k \o \ph). 
\end{align*}
The following theorem establishes the converse statement: 
\begin{thm*}
Given a  $\Diff(M)$-invariant Riemannian metric on $\Imm$, 
there is a unique Riemannian metric on the quotient space $B_i$ such that the
quotient map $\pi:\Imm \to B_i$ is a Riemannian submersion. 
\end{thm*}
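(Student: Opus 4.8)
The plan is to establish this as a standard consequence of the theory of Riemannian submersions, adapted to the weak-metric infinite-dimensional setting. The statement asserts that a $\Diff(M)$-invariant metric on $\Imm$ descends uniquely to a metric on $B_i$ making $\pi$ a Riemannian submersion. The essential content is a \emph{well-definedness} check: the candidate metric on $B_i$ is forced by the requirement that $T_f\pi|_{\Hor_f}$ be an isometry, so uniqueness is immediate; the work lies in showing this prescription does not depend on the choice of representative $f$ in the fiber $\pi\i(\pi(f))$.

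First I would define the candidate metric. Fix a point $b \in B_i$ and tangent vectors $\xi,\eta \in T_b B_i$. Choose any $f \in \Imm$ with $\pi(f)=b$. Since $T_f\pi|_{\Hor_f}:\Hor_f \to T_bB_i$ is a linear isomorphism (as recalled just before the theorem, using that $\Hor$ is a genuine complement for the metrics in this paper), there are unique horizontal lifts $\xi^{\hor}_f, \eta^{\hor}_f \in \Hor_f$ with $T_f\pi.\xi^{\hor}_f=\xi$ and $T_f\pi.\eta^{\hor}_f=\eta$. I then set
$$ G^{B_i}_b(\xi,\eta) := G_f\big(\xi^{\hor}_f,\eta^{\hor}_f\big). $$
This is symmetric and positive definite because $G_f$ is, so it defines an inner product; making $T_f\pi|_{\Hor_f}$ an isometry is exactly this formula, giving both existence of the submersion structure and uniqueness of $G^{B_i}$.

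The main obstacle, and the heart of the proof, is independence of the representative $f$. Any other representative in the same fiber is $f\o\ph$ for some $\ph\in\Diff(M)$, since the fibers of $\pi$ are precisely the $\Diff(M)$-orbits. I would show that the right action $r^\ph$ carries horizontal vectors at $f$ to horizontal vectors at $f\o\ph$ and intertwines the projections. Concretely, $\pi\o r^\ph=\pi$ implies $T(r^\ph)$ maps $V_f$ onto $V_{f\o\ph}$; combined with $\Diff(M)$-invariance of $G$, which gives that $Tr^\ph$ is a $G$-isometry $T_f\Imm \to T_{f\o\ph}\Imm$, it follows that $Tr^\ph$ maps the $G_f$-orthogonal complement $\Hor_f$ onto the $G_{f\o\ph}$-orthogonal complement $\Hor_{f\o\ph}$. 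Moreover $T_{f\o\ph}\pi\o Tr^\ph=T_f\pi$, so $Tr^\ph$ sends the horizontal lift $\xi^{\hor}_f$ at $f$ to the horizontal lift $\xi^{\hor}_{f\o\ph}$ at $f\o\ph$. Therefore, using invariance one more time,
$$ G_{f\o\ph}\big(\xi^{\hor}_{f\o\ph},\eta^{\hor}_{f\o\ph}\big) = G_{f\o\ph}\big(Tr^\ph.\xi^{\hor}_f,Tr^\ph.\eta^{\hor}_f\big) = G_f\big(\xi^{\hor}_f,\eta^{\hor}_f\big), $$
so the value is independent of the representative and $G^{B_i}$ is well defined.

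It remains to record smoothness and the submersion property. Smoothness of $G^{B_i}$ I would deduce from local triviality of the bundle $\pi:\Imm\to B_i$ over the smooth part (the bundle structure of Theorem~\ref{sh:sh}), choosing a smooth local section $\si$ of $\pi$ so that $G^{B_i}_b=\si^*G$ locally, which is smooth since $G$ and $\si$ are; over the orbifold singularities one works $\Diff(M)$-equivariantly upstairs, where everything is already smooth. Finally, that $\pi$ is a Riemannian submersion is built into the construction: by definition $T_f\pi|_{\Hor_f}$ is an isometry onto $T_{\pi(f)}B_i$ for every $f$, which is exactly the defining property recalled before the theorem. This completes the argument.
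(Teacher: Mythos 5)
Your treatment of the main case --- defining $G^{B_i}_{\pi(f)}$ by pushing $G_f$ forward through the isomorphism $T_f\pi|_{\Hor_f}$, and proving independence of the representative from the facts that $Tr^\ph$ is a $G$-isometry (by $\Diff(M)$-invariance), that it carries $V_f$ onto $V_{f\o\ph}$ and hence $\Hor_f$ onto $\Hor_{f\o\ph}$, and that $T_{f\o\ph}\pi \o Tr^\ph = T_f\pi$ --- is precisely the paper's argument, with the paper's one-line appeal to invariance usefully expanded, and with uniqueness correctly observed to be forced by the isometry requirement.

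There is, however, a gap relative to the generality of the statement. The theorem hypothesizes an \emph{arbitrary} $\Diff(M)$-invariant Riemannian metric on $\Imm$, and for a weak metric the orthogonal space $\Hor_f = V_f^\bot$ need not be a complement of $V_f$; the paper stresses exactly this in the paragraph preceding the theorem (the complement may exist only in a suitable completion of $T_f\Imm$). You invoke complementarity ``for the metrics in this paper,'' but the statement is not restricted to those metrics. Where complementarity fails, $T_f\pi|_{\Hor_f}$ is injective but not surjective, so the horizontal lift $\xi^{\hor}_f$ does not exist for every $\xi \in T_{\pi(f)}B_i$, and your defining formula does not produce an inner product on all of $T_{\pi(f)}B_i$. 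The paper's proof disposes of this case in its second sentence: one defines the quotient metric by the metric quotient norm, $G^{B_i}_{\pi(f)}(\xi,\xi)=\inf\bigl\{G_f(h,h): h \in T_f\Imm,\ T_f\pi.h=\xi\bigr\}$, with ``Riemannian submersion'' then understood as ``metric quotient mapping'' (this is how the paper uses the term in section~\ref{in:ri}), referring to \cite[section~3]{Michor98}. To prove the theorem as stated you need to add this second branch, or else explicitly strengthen the hypothesis to include complementarity of the horizontal bundle.
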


\begin{proof}
If the horizontal bundle $\Hor_f$ is a complement to $V_f$ then $T_f\pi:\Hor_f \to T_{\pi(f)}B_i$ 
is an isomorphism (off the orbifold singularities of $B_i$) and we can induce the metric on 
$T_{\pi(f)}B_i$ which is independent of the choice of $f$ in the fiber over $\pi(f)$ by the the 
$\Diff(M)$-invariance of the metric. If it is not a complement one has to consider the metric 
quotient norm. See for example \cite[section 3]{Michor98}.
\end{proof}

\begin{ass*}
It will always be assumed that a $\Diff(M)$-invariant metric $G$ on $\Imm(M,N)$ is given 
and that shape space $B_i$ is endowed with the unique metric such that
the quotient map is a Riemannian submersion.
\end{ass*}

\subsection{Riemannian submersions and geodesics}\label{sh:sub}

It follows from the general theory of Riemannian submersions that horizontal geodesics 
in the top space correspond nicely to geodesics in the quotient space: 
\begin{thm*}
Let $c:[0,1]\rightarrow \Imm$ be a geodesic.
\begin{enumerate}
\item If $c'(t)$ is horizontal at one $t$, then it is horizontal at all $t$. 
\item If $c'(t)$ is horizontal then $\pi \circ c$ is a geodesic in $B_i$.
\item If every curve in $B_i$ can be lifted to a horizontal curve in $\Imm$, 
then there is a one-to-one correspondence between curves in $B_i$ and horizontal curves in $\Imm$. 
This implies that instead of solving the geodesic equation on $B_i$ one can equivalently solve
the equation for horizontal geodesics in $\Imm$.
\end{enumerate}
\end{thm*}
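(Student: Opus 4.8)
The plan is to treat this as the infinite-dimensional analogue of O'Neill's theorem on Riemannian submersions; one could simply invoke the general theory (as in \cite[section~1]{Michor118}), the only points needing care being that $G$ is a weak metric and that $B_i$ has orbifold singularities. Both are harmless away from the singular orbits, once we use the standing assumptions that $\Hor$ is a genuine complement to the vertical bundle and that the geodesic equation is a well-posed ODE. For \textbf{part (1)} I would argue via the conserved reparametrisation momentum. A vector $h\in T_f\Imm$ is horizontal precisely when $G_f(h,Tf.X)=0$ for every $X\in\X(M)$, since the vertical space $V_f=\ker T_f\pi$ is the tangent to the $\Diff(M)$-orbit and hence equals $\{Tf.X:X\in\X(M)\}$. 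For fixed $\ph\in\Diff(M)$ the invariance of $G$ gives that the energy $\frac12\int_0^1 G_c(c',c')\,dt$ is unchanged under $c\mapsto c\circ\ph$, so Noether's theorem applied to the one-parameter subgroup of $\Diff(M)$ generated by $X$ shows that the momentum $J_X(t):=G_{c(t)}\big(c'(t),Tc(t).X\big)$ is constant along any geodesic $c$ (it is the pairing of the momentum $p=G(c',\cdot)$ of section~\ref{sh:gemo} with the generator $Tc.X$). Hence $c'(t_0)$ horizontal means $J_X(t_0)=0$ for all $X$, whence $J_X(t)=0$ for all $t$ and all $X$ by conservation, i.e. $c'(t)$ is horizontal for all $t$.

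For \textbf{part (2)}, writing $\gamma=\pi\circ c$, I would show that $\gamma$ is a critical point of the energy on $B_i$ among variations with fixed endpoints, which is the definition of a geodesic. Given such a variation $\gamma_s$ of $\gamma=\gamma_0$, the idea is to lift it to a variation $c_s$ of $c=c_0$ in $\Imm$ with $\pi\circ c_s=\gamma_s$ and both endpoints held fixed at $c(0),c(1)$; such a lift exists by the local bundle structure of $\pi$ from section~\ref{sh:sh}. Because $\pi$ is a Riemannian submersion one has $|\dot\gamma_s|^2=|(\dot c_s)^{\hor}|^2$, so the energy of $\gamma_s$ equals $\frac12\int_0^1|(\dot c_s)^{\hor}|^2\,dt$, while the energy of $c_s$ adds the vertical contribution $\frac12\int_0^1|(\dot c_s)^{\on{ver}}|^2\,dt$. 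Since $c=c_0$ is horizontal, $(\dot c_0)^{\on{ver}}=0$, and the $s$-derivative at $s=0$ of a squared norm vanishing there is zero (every term carries a factor $(\dot c_0)^{\on{ver}}$, so the variation of the metric and of the vertical projection do not interfere). Thus $\frac{d}{ds}\big|_{s=0}$ of the energy of $\gamma_s$ equals that of $c_s$, and the latter is zero because $c$ is a geodesic and $c_s$ has fixed endpoints. Hence $\gamma$ is a geodesic.

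\textbf{Part (3)} is then formal. Fixing an immersion $f_0$ over the initial point, horizontal lifting—possible by hypothesis and unique, since a horizontal curve is the unique solution of the lifting ODE determined by its projection and starting point—gives a bijection between curves in $B_i$ and horizontal curves in $\Imm$ issuing from $f_0$, with inverse $\pi$. Under this bijection geodesics correspond to horizontal geodesics: by (2) a horizontal geodesic projects to a geodesic, and conversely, given a geodesic $\gamma$ in $B_i$, the geodesic $c$ in $\Imm$ with $c(0)=f_0$ and $c'(0)$ the horizontal lift of $\gamma'(0)$—which exists and is unique by well-posedness of the geodesic equation—stays horizontal by (1) and projects by (2) to a geodesic with the same initial data, hence equals $\gamma$. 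This is precisely the assertion that solving for horizontal geodesics upstairs is equivalent to solving the geodesic equation on $B_i$.

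\textbf{Main obstacle.} I expect the only genuine work to be in part (2): constructing the endpoint-fixing lift $c_s$ of the variation $\gamma_s$ and justifying differentiation under the integral in the weak, infinite-dimensional setting. Everything else is bookkeeping resting on facts established elsewhere in the paper—complementarity of $\Hor$, the bundle structure of $\pi$, and well-posedness of the geodesic equation—so I would either cite the general theory of Riemannian submersions or assemble these direct arguments, whichever reads more cleanly.
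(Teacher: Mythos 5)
Your proposal is correct, but it does not follow the paper's route: the paper's entire proof of this theorem is the citation \cite[section~26]{MichorH}, i.e.\ an appeal to the general theory of Riemannian submersions, whereas you assemble a self-contained argument. Concretely, for (1) you use conservation of the reparametrization momentum $J_X(t)=G_{c(t)}\bigl(c'(t),Tc(t).X\bigr)$, $X\in\X(M)$, which is legitimate because the vertical space at $f$ is exactly $\{Tf.X:X\in\X(M)\}$ and $G$ is $\Diff(M)$-invariant; this is the same mechanism the paper develops anyway (section~\ref{so:mo}, and the remark following the theorem in section~\ref{sh:gesh}), so your route meshes well with the rest of the text. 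For (2) you give the first-variation argument, and (3) is bookkeeping. What the paper's citation buys is brevity; what your direct proof buys is that it is visibly valid in the present setting --- a weak metric on an infinite-dimensional space with an orbifold quotient --- whereas \cite[section~26]{MichorH} is formulated for finite-dimensional strong Riemannian geometry, so invoking it tacitly asks the reader to check that the arguments survive weakness of the metric and the singular quotient. Your version makes the actual inputs explicit: invariance of $G$, complementarity of $\Hor$, and liftability of curves and variations.

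Two points need repair. In (2), the endpoint-fixing lift of the variation $\gamma_s$ is both delicate to construct (a lift with $c_s(0)=c(0)$ only gives $c_s(1)\in\pi^{-1}(\gamma(1))$; correcting this requires a $t$-dependent family of reparametrizations) and unnecessary: allow the endpoints of the lifted variation to move vertically inside the fibers; then the boundary terms $G(c',\delta c)\big|_{t=0}^{t=1}$ in the first variation vanish because $c'$ is horizontal while $\delta c$ is vertical there, and the rest of your argument is unchanged. In (3), the step ``projects by (2) to a geodesic with the same initial data, hence equals $\gamma$'' invokes uniqueness of geodesics in $B_i$, which is circular in this paper's logic: well-posedness is proved only later (theorem in section~\ref{so:we}), only for Sobolev-type metrics, and only on $\Imm$; uniqueness downstairs is precisely what one wants to extract from the correspondence being proved. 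Better: lift the given geodesic $\gamma$ of $B_i$ to a horizontal curve $c$ (the hypothesis of (3)) and prove directly that $c$ is a geodesic --- for any fixed-endpoint variation $c_s$ of $c$ one has $E(c_s)=E(\pi\circ c_s)+\tfrac12\int_0^1 G\bigl((\dot c_s)^{\on{ver}},(\dot c_s)^{\on{ver}}\bigr)\,dt$, and both summands have vanishing $s$-derivative at $s=0$: the first because $\gamma$ is a geodesic and $\pi\circ c_s$ is a fixed-endpoint variation of it, the second because $(\dot c_0)^{\on{ver}}=0$ enters quadratically.
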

See \cite[section~26]{MichorH} for the proof.

\subsection{Geodesic equation on shape space}\label{sh:gesh}

Theorem~\ref{sh:sub} applied to the Riemannian submersion $\pi: \Imm \to B_i$ yields: 
\begin{thm*}
Assuming that every curve in $B_i$ can be lifted to a horizontal curve in $\Imm$, 
the geodesic equation on shape space is equivalent to
\begin{equation}\label{sh:gesh:eq1}
\left\{\begin{aligned}
f_t&=f_t^{\hor}\in \Hor \\
(\nabla_{\p_t}f_t)^{\hor} &= \Big(\frac12 H(f_t,f_t)-K(f_t,f_t)\Big)^{\hor},
\end{aligned}\right.
\end{equation}
where $f$ is a horizontal curve in $\Imm$, where $H,K$ are the metric gradients
defined in section~\ref{sh:me} and where $\nabla$ is the covariant derivative defined
in section~\ref{sh:cov}.
\end{thm*}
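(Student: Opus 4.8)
The plan is to obtain this as a direct corollary of the general submersion theory in Theorem~\ref{sh:sub} together with the geodesic equation on immersions from Theorem~\ref{sh:ge}. Since the quotient map $\pi:\Imm\to B_i$ is a Riemannian submersion and every curve in $B_i$ is assumed to lift horizontally, part~(3) of Theorem~\ref{sh:sub} reduces the problem of solving the geodesic equation on $B_i$ to that of finding \emph{horizontal geodesics} in $\Imm$, i.e. geodesics $f$ of the metric $G$ whose velocity satisfies $f_t\in\Hor$. By Theorem~\ref{sh:ge} the geodesic condition for $G$ is exactly $\nabla_{\p_t}f_t=\tfrac12 H(f_t,f_t)-K(f_t,f_t)$, so the task is to characterize which solutions of this equation are horizontal and to see that horizontality lets one replace the full equation by its horizontal projection.

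First I would record that, by part~(1) of Theorem~\ref{sh:sub}, a geodesic whose velocity is horizontal at one time is horizontal at all times; hence the constraint $f_t=f_t^{\hor}$ (imposed, say, at $t=0$) propagates and the first line of the system is consistent with $f$ being a geodesic. The substance is the equivalence, for a curve with $f_t\in\Hor$ throughout, between the full equation of Theorem~\ref{sh:ge} and its horizontal part $(\nabla_{\p_t}f_t)^{\hor}=\big(\tfrac12 H(f_t,f_t)-K(f_t,f_t)\big)^{\hor}$. One direction is immediate: if $f$ is a horizontal geodesic then the full equation holds, so its horizontal projection holds. For the converse I would use the O'Neill-type relation for the submersion $\pi$, namely that for a horizontal curve $f$ the base-space acceleration of $\pi\o f$ is the image under $T\pi$ of the horizontal part of the acceleration of $f$; thus the horizontal projection of the geodesic equation is exactly the statement that $\pi\o f$ is a geodesic in $B_i$. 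Since $f$ is horizontal and projects to $\pi\o f$, it is the horizontal lift of that geodesic, and horizontal lifts of geodesics are again geodesics (part~(2) of Theorem~\ref{sh:sub} and \cite[section~26]{MichorH}); hence $f$ satisfies the full equation. This shows that, along horizontal curves, the vertical part of the geodesic equation is automatically satisfied, so only its horizontal projection is a genuine constraint, and equation~\eqref{sh:gesh:eq1} follows.

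The step I expect to be the main obstacle is precisely this passage from the full equation to its horizontal projection, because it rests on the horizontal bundle $\Hor=V^\bot$ being an actual complement to the vertical bundle $V$. The metric $G$ is only weak, so a priori $V^\bot$ need not complement $V$ and the projections $(\cdot)^{\hor}$, $(\cdot)^{\vert}$ need not be defined; this is the one point where one must invoke the standing assumption of section~\ref{sh:rish} (verified later for the Sobolev metrics of this paper) that for these metrics $\Hor$ is indeed a complement. Granting this, the $\Diff(M)$-invariance of $G$ makes the induced metric on $B_i$ and the horizontal--vertical splitting well defined and equivariant, so the O'Neill relation applies verbatim and the equivalence goes through.
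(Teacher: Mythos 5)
Your reduction of the theorem to a single crux --- that along a horizontal curve the horizontal projection of the equation of Theorem~\ref{sh:ge} already forces the full equation --- is exactly right, and the trivial direction is handled correctly. But the way you discharge that crux has a genuine gap. The ``O'Neill-type relation'' you invoke is a statement about the Levi-Civita covariant derivative (acceleration) of the induced metric on $B_i$, and in the setting of this theorem no such structure is available: $B_i$ is only an orbifold, the metric is weak, and the existence of the metric gradients $H,K$ (equivalently of a Levi-Civita derivative along curves) is assumed in section~\ref{sh:me} only on $\Imm$; nothing analogous is established for the quotient. Giving a workable meaning to ``the geodesic equation on shape space'' is precisely what this theorem accomplishes, so asserting that the finite-dimensional submersion calculus ``applies verbatim'' assumes the hard part of what is to be proven. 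Your diagnosis that the only obstacle is whether $\Hor$ complements the vertical bundle is therefore incomplete. The same objection hits your second ingredient: ``horizontal lifts of geodesics are again geodesics'' is not part~(2) of Theorem~\ref{sh:sub} (that states the opposite implication), and its classical proof uses local existence and uniqueness of geodesics upstairs and downstairs --- unavailable at this level of generality, since well-posedness is proven only later, for Sobolev metrics, in Theorem~\ref{so:we}.

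The paper's argument avoids all structure on $B_i$ and works entirely on $\Imm$, using the $\Diff(M)$-invariance of $G$ and conservation of the reparametrization momentum (a Noether-type argument; the details are in \cite[section~3.14]{Harms2010}). Concretely: every vertical variation with fixed endpoints is generated by a time-dependent reparametrization $f\o\ph$, and for such a variation invariance gives $G_{f\o\ph}\big(\p_t(f\o\ph),\p_t(f\o\ph)\big)=G_f(f_t+Tf.\xi,\,f_t+Tf.\xi)$ with $\xi=(\p_t\ph)\o\ph\i$; if $f_t$ is horizontal the cross term $G_f(f_t,Tf.\xi)$ vanishes identically, so the energy changes only to second order and the first variation of the energy in all vertical directions is automatically zero. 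Equivalently, the vertical part of the Euler--Lagrange equation is exactly the conservation law for the reparametrization momentum $G_f(f_t,Tf.X)$, $X\in\X(M)$, which vanishes identically along horizontal curves. This is precisely the implication you need (horizontality plus the horizontally projected equation implies the full equation of Theorem~\ref{sh:ge}), after which part~(3) of Theorem~\ref{sh:sub} finishes the proof as in your first paragraph; the explicit verification of this cancellation for Sobolev metrics is carried out in section~\ref{so:geshmo}. If you want to keep your submersion-theoretic formulation, you must prove the O'Neill relation in this weak-metric, infinite-dimensional setting, and the natural proof is again this invariance/momentum computation --- so it should be made explicit rather than cited.
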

This is a consequence of the $\Diff(M)$-invariance of the metric $G$ 
and the conservation of the reaparametrization momentum. 
A general proof can be found in \cite[section~3.14]{Harms2010}.

It will be shown in section~\ref{so:ho2} that curves in $B_i$ 
can be lifted to horizontal curves in $\Imm$ for the very general class of 
Sobolev type metrics. Thus all assumptions and conclusions of the theorem hold. 

\subsection[Geodesic equation on shape space]%
{Geodesic equation on shape space in terms of the momentum}\label{sh:geshmo}

As in the previous section, theorem~\ref{sh:sub} will be applied to the Riemannian submersion 
$\pi: \Imm \to B_i$. 
But this time, the formulation of the geodesic equation in terms of the momentum will be used, 
see section~\ref{sh:gemo}.
As will be seen in section~\ref{so:geshmo}, this is the most convenient formulation
of the geodesic equation for Sobolev-type metrics.
\begin{thm*}
Assuming that every curve in $B_i$ can be lifted to a horizontal curve in $\Imm$, 
the geodesic equation on shape space is equivalent to the set of equations
\begin{equation*}
\left\{\begin{aligned}
p &= G_f(f_t,\cdot) \in \Hor \subset T^*\Imm, \\
(\nabla_{\p_t}p)^{\hor} &= \frac12 G_f\big(H(f_t,f_t),\cdot)^{\hor}.
\end{aligned}\right.
\end{equation*}
Here $f$ is a curve in $\Imm$, 
$H$ is the metric gradient defined in section~\ref{sh:me},
and $\nabla$ is the covariant derivative defined in section~\ref{sh:cov}.
$f$ is horizontal because $p$ is horizontal. 
\end{thm*}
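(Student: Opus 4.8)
The plan is to deduce this momentum formulation from the velocity formulation already obtained in section~\ref{sh:gesh}, in exactly the same way that the momentum equation on $\Imm$ in section~\ref{sh:gemo} was deduced from the velocity equation in section~\ref{sh:ge}. The two ingredients are theorem~\ref{sh:sub}, which identifies geodesics in $B_i$ with horizontal geodesics in $\Imm$, and the computation of $\nabla_{\p_t} p$ carried out in the proof of the theorem in section~\ref{sh:gemo}, which holds along any path of immersions.

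First I would record the compatibility of the horizontal projection with the metric, namely
$$\big(G_f(w,\cdot)\big)^{\hor} = G_f(w^{\hor},\cdot) \qquad\text{for all } w \in T_f\Imm.$$
This rests on the standing assumption (valid for all metrics considered here) that $\Hor_f$ is a genuine $G_f$-orthogonal complement of the vertical space $V_f$: writing $w = w^{\hor} + w^{\vert}$, the covector $G_f(w^{\hor},\cdot)$ annihilates $V_f$ and is therefore horizontal, while $G_f(w^{\vert},\cdot)$ annihilates $\Hor_f$ and is therefore vertical. As an immediate consequence, $f_t\in\Hor$ holds if and only if $p=G_f(f_t,\cdot)$ is a horizontal covector, which settles the first of the two equations.

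For the second equation I would reuse the identity from the proof in section~\ref{sh:gemo}, which uses only the product rule and the defining property of the metric gradient $K$ and is therefore valid along any path:
$$\nabla_{\p_t} p = (\nabla_{f_t}G)(f_t,\cdot) + G_f(\nabla_{\p_t}f_t,\cdot) = G_f\big(K(f_t,f_t),\cdot\big) + G_f(\nabla_{\p_t}f_t,\cdot).$$
Projecting horizontally and applying the compatibility identity gives
$$(\nabla_{\p_t}p)^{\hor} = G_f\big(K(f_t,f_t)^{\hor},\cdot\big) + G_f\big((\nabla_{\p_t}f_t)^{\hor},\cdot\big).$$
Substituting the velocity geodesic equation $(\nabla_{\p_t}f_t)^{\hor} = \big(\tfrac12 H(f_t,f_t) - K(f_t,f_t)\big)^{\hor}$ from section~\ref{sh:gesh}, the two $K$-terms cancel and one is left with $\tfrac12 G_f\big(H(f_t,f_t)^{\hor},\cdot\big) = \tfrac12 G_f(H(f_t,f_t),\cdot)^{\hor}$, as claimed. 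Conversely, running these steps backwards and invoking the injectivity of the weak metric $G_f\colon T_f\Imm \to T_f^*\Imm$ on the horizontal vectors involved recovers the velocity equation, so the two systems are equivalent.

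I expect the only genuinely delicate point to be the compatibility identity of the second paragraph, together with the implicit claim that the tangential horizontal/vertical splitting corresponds under $G_f$ to the cotangential one. In the weak, non-reflexive Riemannian setting this is not automatic: it relies squarely on the complement property $\Hor_f \oplus V_f = T_f\Imm$, which is assumed throughout and is verified for Sobolev-type metrics in section~\ref{so}. Everything after that is the same bookkeeping as in section~\ref{sh:gemo}.
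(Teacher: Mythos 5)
Your proof is correct and follows essentially the route the paper intends: the paper states this theorem without a written proof, presenting it as the combination of theorem~\ref{sh:sub} (equivalently, the velocity formulation of section~\ref{sh:gesh}) with the momentum computation from the proof in section~\ref{sh:gemo}, which is exactly the reduction you carry out. Your explicit compatibility identity $\big(G_f(w,\cdot)\big)^{\hor}=G_f(w^{\hor},\cdot)$, resting on the complement property $\Hor_f\oplus V_f=T_f\Imm$ assumed throughout and verified for Sobolev-type metrics in section~\ref{so}, correctly isolates the one point the paper leaves implicit.
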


\section{Variational formulas}\label{va}

Recall that many operators like
$$g=f^*\g, \quad S=S^f, \quad \vol(g), \quad \nabla=\nabla^g, \quad \Delta=\Delta^g, \quad \ldots$$ 
implicitly depend on the immersion $f$. In this section their derivative 
with respect to $f$ which is called their \emph{first variation} will be calculated . 
These formulas will be used to calculate the metric gradients that are needed 
for the geodesic equation. 

This section is based on \cite{Michor118}, see also \cite{Harms2010}. 
Some but not all of the formulas were known before \cite{Besse2008, Michor102}.
More variational formulas can be found in \cite{Besse2008,Verpoort2008,Bauer2010}.

\subsection{Paths of immersions}\label{va:pa}

All of the differential-geometric concepts introduced in section \ref{no}
can be recast for a path of immersions instead of a fixed immersion. 
This allows to study variations of immersions. 
So let $f:\R \to \on{Imm}(M,N)$ be a path of immersions. By convenient calculus
\cite{MichorG}, $f$ can equivalently be seen as $f:\R \x M \to N$ 
such that $f(t,\cdot)$ is an immersion for each $t$. 
The bundles over $M$ can be replaced by bundles over $\R \x M$:
\begin{equation*}\xymatrix{
\on{pr}_2^* T^r_s M \ar[d] & 
\on{pr}_2^* T^r_s M \otimes f^*TN \ar[d] &
\Nor(f) \ar[d]\\
\R \x M & \R \x M & \R \x M
}\end{equation*}
Here $\on{pr}_2$ denotes the projection $\on{pr}_2:\R \x M \to M$.
The covariant derivative $\nabla_Z h$ is now defined for vector fields $Z$ on $\R \x M$ 
and sections $h$ of the above bundles. 
The vector fields $(\p_t, 0_M)$ and $(0_{\R}, X)$, where $X$ is a vector field on $M$, are of
special importance. 
In later sections they will be identified with $\p_t$ and $X$ 
whenever this does not pose any problems. 
Let
$$\on{ins}_t : M \to \R \x M, \qquad x \mapsto (t,x) .$$
Then by property~\ref{no:co:prop5} from section~\ref{no:co} one has for vector fields $X,Y$ on $M$
\begin{align*}
\nabla_X Tf(t,\cdot).Y &= \nabla_X T(f \o \on{ins}_t) \o Y = \nabla_X Tf \o T\on{ins}_t \o Y
\\&= \nabla_X Tf \o (0_\R,Y) \o \on{ins}_t 
= \nabla_{T\on{ins}_t \o X} Tf \o (0_\R,Y)\\&
 = \big(\nabla_{(0_\R,X)} Tf \o (0_\R,Y)\big) \o \on{ins}_t .
\end{align*}
This shows that one can recover the static situation at $t$ by using vector fields on $\R \x M$ 
with vanishing $\R$-component and evaluating at $t$.

\subsection{Directional derivatives of functions}

The following ways to denote directional derivatives of functions will be used, in particular in 
infinite dimensions.
Given a function $F(x,y)$ for instance,
$$ D_{(x,h)}F \text{ will be written as a shorthand for } \partial_t|_0 F(x+th,y).$$
Here $(x,h)$ in the subscript denotes the tangent vector with foot point $x$ and direction $h$. 
If $F$ takes values in some linear space, this linear space and its tangent space will be identified. 

\subsection{Setting for first variations}\label{va:se}

In all of this chapter, let $f$ be an immersion and $f_t \in T_f\Imm$ a tangent vector to $f$. 
The reason for calling the tangent vector $f_t$ is that in calculations  
it will often be the derivative of a curve of immersions through $f$. 
Using the same symbol $f$ for the fixed immersion
and for the path of immersions through it, one has in fact that
$$D_{(f,f_t)} F = \p_t F(f(t)).$$

\subsection{Variation of equivariant tensor fields}\label{va:ta}

Let the mapping $$F:\Imm(M,N) \to \Gamma(T^r_s M)$$ 
take values in some space of tensor fields over $M$, 
or more generally in any natural bundle over $M$, see \cite{MichorF}.

\begin{lem*}
If $F$ is equivariant 
with respect to pullbacks by 
diffeomorphisms of $M$, i.e. 
$$F(f)=(\ph^* F)(f)=\ph^* \Big(F\big((\ph\i)^*f\big)\Big) $$ 
for all $\ph \in \on{Diff}(M)$ and $f \in \Imm(M,N)$,
then the tangential variation of $F$ is its Lie-derivative:
\begin{align*}
D_{(f,Tf.f_t^\top)} F&=
\p_t|_0 F\Big(f \o Fl^{f_t^\top}_t\Big)=
\p_t|_0 F\Big((Fl^{f_t^\top}_t)^* f\Big)\\&=
\p_t|_0 \Big(Fl_t^{f_t^\top}\Big)^* \big(F(f)\big) = \L_{f_t^\top}\big(F(f)\big).
\end{align*}
\end{lem*}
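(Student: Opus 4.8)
The plan is to realise the tangential variation by an explicit curve built from the flow of $f_t^\top$, and then to use the equivariance hypothesis to convert the variation of $F$ into a Lie derivative. The chain of equalities displayed in the statement is in fact the proof; what remains is to justify each link.

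First I would note that, since $F$ is smooth, the directional derivative $D_{(f,Tf.f_t^\top)}F$ depends only on the velocity vector $Tf.f_t^\top \in T_f\Imm$, and so may be computed along any curve through $f$ with that initial velocity. A convenient such curve is the reparametrisation path $t \mapsto f \o Fl^{f_t^\top}_t$, where $Fl^{f_t^\top}$ is the flow of the vector field $f_t^\top$ on $M$; since $M$ is compact this flow is defined for $t$ near $0$. Differentiating gives $\p_t|_0\big(f \o Fl^{f_t^\top}_t\big) = Tf.\p_t|_0 Fl^{f_t^\top}_t = Tf.f_t^\top$, so the curve has the desired initial velocity and hence $D_{(f,Tf.f_t^\top)}F = \p_t|_0 F\big(f \o Fl^{f_t^\top}_t\big)$.

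Next I would rewrite the composition as a pullback of the immersion, $f \o \ph = \ph^* f$ for $\ph \in \Diff(M)$, so that $f \o Fl^{f_t^\top}_t = (Fl^{f_t^\top}_t)^* f$. Substituting $g = (\ph\i)^* f$ in the hypothesis $F(f) = \ph^*\big(F((\ph\i)^* f)\big)$ puts the equivariance in the equivalent form $F(\ph^* g) = \ph^*\big(F(g)\big)$, valid for all $\ph$ and $g$. Applying this with $\ph = Fl^{f_t^\top}_t$ and $g = f$ yields $F\big((Fl^{f_t^\top}_t)^* f\big) = (Fl^{f_t^\top}_t)^*\big(F(f)\big)$. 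Differentiating at $t=0$ and invoking the defining property of the Lie derivative of a tensor field, $\L_X T = \p_t|_0 (Fl^X_t)^* T$, gives $D_{(f,Tf.f_t^\top)}F = \L_{f_t^\top}\big(F(f)\big)$, as claimed.

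The one point deserving care is the double role of the symbol $\ph^*$: on the immersion $f \colon M \to N$ it acts as precomposition $f \o \ph$, whereas on the tensor field $F(f) \in \Ga(T^r_s M)$ it is the ordinary tensorial pullback. The equivariance hypothesis is precisely what links these two actions, and once it has been used the statement reduces to the standard fact that the Lie derivative is the infinitesimal generator of the pullback action along a flow. I expect no genuine analytic obstacle: compactness of $M$ supplies the local flow, and smoothness of $F$ justifies the path-independence of the directional derivative used in the first step.
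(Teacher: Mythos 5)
Your proposal is correct and follows exactly the paper's argument: the paper's ``proof'' is precisely the displayed chain of equalities (compute the derivative along the flow curve $t \mapsto f \o Fl^{f_t^\top}_t$, identify composition with pullback, apply equivariance, and recognize the Lie derivative), and you justify each link in the same way. Your explicit remarks on the smoothness of $F$ (path-independence of the directional derivative) and on the two meanings of $\ph^*$ only make explicit what the paper leaves implicit.
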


This allows us to calculate the tangential variation of the pullback metric and 
the volume density, for example.

\subsection{Variation of the metric}\label{va:me}

\begin{lem*}
The differential of the pullback metric
\begin{equation*}\left\{ \begin{array}{ccl}
\Imm &\to &\Gamma(S^2_{>0} T^*M),\\
f &\mapsto &g=f^*\g
\end{array}\right.\end{equation*}
is given by
\begin{align*}
D_{(f,f_t)} g&= 2\on{Sym}\g(\nabla f_t,Tf) = -2 \g(f_t^\bot,S)+2 \on{Sym} \nabla (f_t^\top)^\flat 
\\& = -2 \g(f_t^\bot,S)+ \L_{f_t^\top} g.
\end{align*}
\end{lem*}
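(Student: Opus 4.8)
The plan is to evaluate the variation on a fixed pair of vector fields $X,Y$ on $M$, i.e.\ to compute $D_{(f,f_t)}g(X,Y)=\p_t\,\g(Tf.X,Tf.Y)$ along a path of immersions with velocity $f_t$, and then to read off the three claimed expressions in turn. First I would regard $f$ as a map $\R\x M\to N$ and identify $\p_t$ with $(\p_t,0_M)$ and $X$ with $(0_\R,X)$, as in section~\ref{va:pa}. Since the covariant derivative respects $\g$ (section~\ref{no:co}),
$$\p_t\,\g(Tf.X,Tf.Y)=\g(\nabla_{\p_t}Tf.X,Tf.Y)+\g(Tf.X,\nabla_{\p_t}Tf.Y).$$
The torsion-free swapping identity~\eqref{no:sw:to2} gives $\nabla_{\p_t}Tf.X=\nabla_X f_t$ (and likewise with $Y$), so the right-hand side is $\g(\nabla_X f_t,Tf.Y)+\g(\nabla_Y f_t,Tf.X)$, which is precisely $2\on{Sym}\,\g(\nabla f_t,Tf)(X,Y)$. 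This establishes the first equality with essentially no computation.

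For the second equality I would decompose $f_t=Tf.f_t^\top+f_t^\bot$ and insert it into $\g(\nabla_X f_t,Tf.Y)$. By the definition of the second fundamental form (section~\ref{no:we}) the tangential piece splits as $\nabla_X(Tf.f_t^\top)=Tf.\nabla_X f_t^\top+S(X,f_t^\top)$; pairing with the tangential $Tf.Y$ annihilates the normal term $S(X,f_t^\top)$ and leaves $g(\nabla_X f_t^\top,Y)$. The delicate piece is the normal part $\g(\nabla_X f_t^\bot,Tf.Y)$: here I would differentiate the orthogonality relation $\g(f_t^\bot,Tf.Y)=0$ to obtain $\g(\nabla_X f_t^\bot,Tf.Y)=-\g(f_t^\bot,\nabla_X Tf.Y)=-\g(f_t^\bot,S(X,Y))$, the last step again because only the normal component $S(X,Y)$ of $\nabla_X Tf.Y$ survives the pairing with the normal field $f_t^\bot$. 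Thus $\g(\nabla_X f_t,Tf.Y)=g(\nabla_X f_t^\top,Y)-\g(f_t^\bot,S(X,Y))$; symmetrizing in $X,Y$ (the $S$-term is already symmetric) gives $-2\g(f_t^\bot,S)$ together with the symmetrization of $g(\nabla_X f_t^\top,Y)$. Recognizing $g(\nabla_X f_t^\top,Y)=(\nabla_X(f_t^\top)^\flat)(Y)$, valid because $\nabla g=0$, rewrites this second term as $2\on{Sym}\nabla(f_t^\top)^\flat$, yielding the second equality.

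The third equality amounts to identifying $2\on{Sym}\nabla(f_t^\top)^\flat$ with the Lie derivative $\L_{f_t^\top}g$. This is exactly the classical Koszul formula $(\L_V g)(X,Y)=g(\nabla_X V,Y)+g(\nabla_Y V,X)$ for a torsion-free metric connection, so I would either quote it directly or, more in the spirit of this paper, deduce it from the equivariance Lemma in section~\ref{va:ta}: since $g=f^*\g$ is equivariant under pullback by diffeomorphisms, its purely tangential variation $D_{(f,Tf.f_t^\top)}g$ equals $\L_{f_t^\top}g$, and by the second equality with $f_t^\bot=0$ this tangential variation is $2\on{Sym}\nabla(f_t^\top)^\flat$.

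I expect the one real obstacle to be the normal-part term: getting the sign right and producing $S$ through the orthogonality-differentiation trick, while keeping the tangential/normal splittings consistent. Everything else is bookkeeping once the swapping identity~\eqref{no:sw:to2}, metric compatibility of $\nabla$, and the definition of $S$ from section~\ref{no:we} are in hand.
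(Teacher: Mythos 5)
Your proposal is correct and follows essentially the same route as the paper's proof: the metric-compatibility plus torsion-free swap \eqref{no:sw:to2} for the first equality, the tangential/normal splitting of $f_t$ with the orthogonality-differentiation trick producing $-2\g(f_t^\bot,S)$ for the second, and the equivariance lemma of section~\ref{va:ta} (or the direct Koszul-type identity, both of which the paper also offers) for the third. The only difference is presentational: you evaluate everything on fixed fields $X,Y$, while the paper carries out the middle step invariantly as $2\on{Sym}\g(\nabla f_t^\bot+\nabla Tf.f_t^\top,Tf)=-2\on{Sym}\g(f_t^\bot,\nabla Tf)+2\on{Sym}g(\nabla f_t^\top,\cdot)$.
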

Here $\on{Sym}$ denotes the symmetric part of the tensor field $C$ of type $\left(\begin{smallmatrix}0\\2\end{smallmatrix}\right)$  given by
$$\big(\on{Sym}(C)\big)(X,Y):=\frac12\big(C(X,Y)+C(Y,X)\big).$$

\begin{proof}
Let $f:\R \x M \to N$ be a path of immersions. Swapping covariant derivatives as in 
section~\ref{no:sw}, formula \eqref{no:sw:to2} one gets
\begin{align*}
\p_t\big(g(X,Y)\big) &= \p_t\big( \g( Tf.X,Tf.Y ) \big)
= \g( \nabla_{\p_t}Tf.X,Tf.Y ) + \g( Tf.X, \nabla_{\p_t}Tf.Y )\\
&=\g( \nabla_X f_t,Tf.Y ) + \g( Tf.X, \nabla_Y f_t ) = \big(2 \on{Sym}\g(\nabla f_t,Tf)\big)(X,Y).
\end{align*}
Splitting $f_t$ into its normal and tangential part yields
\begin{align*}
2 \on{Sym}\g(\nabla f_t,Tf) &=
2 \on{Sym}\g(\nabla f_t^\bot + \nabla Tf.f_t^\top,Tf) \\&=
-2 \on{Sym}\g(f_t^\bot,\nabla Tf)+2 \on{Sym} g(\nabla f_t^\top,\cdot) \\&=
-2 \g(f_t^\bot,S)+2 \on{Sym} \nabla (f_t^\top)^\flat .
\end{align*}
Finally the relation
$$D_{(f,Tf.f_t^\top)} g = 2 \on{Sym} \nabla (f_t^\top)^\flat = \L_{f_t^\top} g $$
follows either from the equivariance of $g$ 
with respect to pullbacks by diffeomorphisms (see section~\ref{va:ta}) or directly from
\begin{align*}
(\L_Xg)(Y,Z)&=
\L_X\big(g(Y,Z)\big)-g(\L_XY,Z)-g(Y,\L_XZ)\\&=
\nabla_X\big(g(Y,Z)\big)-g(\nabla_XY-\nabla_YX,Z)-g(Y,\nabla_XZ-\nabla_ZX)\\&=
g(\nabla_YX,Z)+g(Y,\nabla_ZX)=
(\nabla_YX)^\flat(Z)+(\nabla_ZX)^\flat(Y)\\&=
(\nabla_YX^\flat)(Z)+(\nabla_ZX^\flat)(Y)=2 \on{Sym} \big(\nabla(X^\flat)\big)(Y,Z).\qedhere
\end{align*}
\end{proof}

\subsection{Variation of the inverse of the metric}\label{va:in}

\begin{lem*}
The differential of the inverse of the pullback metric
\begin{equation*}\left\{ \begin{array}{ccl}
\Imm &\to &\Ga\big(L(T^*M,TM)\big),\\
f &\mapsto &g\i=(f^*\g)\i
\end{array}\right.\end{equation*}
is given by
\begin{align*}
D_{(f,f_t)} g\i = D_{(f,f_t)} (f^*\g)\i =2 \g(f_t^\bot, g\i S  g\i) + \mathcal L_{f_t^\top}(g\i)
\end{align*}
\end{lem*}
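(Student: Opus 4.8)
The plan is to reduce everything to the variation of $g$ itself, already computed in the preceding lemma, by means of the standard formula for differentiating an inverse. Viewing $g$ as the isomorphism $\flat=\check g:TM\to T^*M$ and $g\i$ as its inverse $\sharp:T^*M\to TM$, the relation $g\i\o g=\Id_{TM}$ holds for every immersion $f$. Differentiating this identity in the direction $f_t$ and using $D_{(f,f_t)}\Id=0$ gives
\begin{equation*}
D_{(f,f_t)}(g\i)=-\,g\i\o\big(D_{(f,f_t)}g\big)\o g\i,
\end{equation*}
exactly as in the finite-dimensional computation of the derivative of $A\mapsto A\i$. Since $g\i$ is a smooth bijective operator depending smoothly on $f$, this step is purely algebraic and involves no analytic subtlety.

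Next I would substitute the formula $D_{(f,f_t)}g=-2\g(f_t^\bot,S)+\L_{f_t^\top}g$ from section~\ref{va:me} and split the result into two pieces. The first piece becomes $2\,g\i\,\g(f_t^\bot,S)\,g\i=2\g(f_t^\bot,g\i S g\i)$, where $S$ is read as the map $TM\to f^*TN$ and $\g(f_t^\bot,S)$ is the $(0,2)$-tensor on $M$ obtained by contracting the normal factor against $f_t^\bot$; pre- and post-composing with $g\i$ then produces the $(2,0)$-tensor written in the statement. This matches the first summand of the claimed formula.

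The second piece is $-g\i(\L_{f_t^\top}g)g\i$, and the remaining point is to recognize it as $\L_{f_t^\top}(g\i)$. This follows from the very same inverse-derivative identity, now applied to the Lie derivative: since $\L_X$ is computed as $\p_t|_0 (Fl^X_t)^*$ and pullback commutes with inversion, one has $\L_X(g\i)=-g\i(\L_X g)g\i$ for any vector field $X$ on $M$ (equivalently, differentiate $\L_X(g\i\o g)=0$ using that $\L_X$ is a derivation compatible with contractions). Applying this with $X=f_t^\top$ completes the identification and yields the stated formula. The step I would watch most carefully is the bookkeeping of tensor types: making sure the compositions $g\i S g\i$ and $g\i(\L_{f_t^\top}g)g\i$ contract the correct slots, so that the normal-bundle–valued factor and the purely covariant factors are treated consistently and the symmetry built into $S$ and into $\L_{f_t^\top}g$ is preserved.
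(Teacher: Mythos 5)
Your proposal is correct and follows essentially the same route as the paper's proof: differentiate the identity $g\i\o g=\Id$ to get $D_{(f,f_t)}(g\i)=-g\i(D_{(f,f_t)}g)g\i$, substitute the variation of $g$ from the preceding lemma, and identify $-g\i(\L_{f_t^\top}g)g\i$ with $\L_{f_t^\top}(g\i)$. The paper simply states these steps in a four-line computation; your version spells out the justification of the Lie-derivative identity and the slot bookkeeping, which the paper leaves implicit.
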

\begin{proof}
\begin{align*}
\p_t g\i &= - g\i (\p_t g ) g\i
 = -g\i \big(-2 \g(f_t^\bot,S)+ \L_{f_t^\top} g\big) g\i \\
& = 2 g\i \g(f_t^\bot,S) g\i -g\i (\L_{f_t^\top} g) g\i
= 2 \g(f_t^\bot,g\i S g\i)+ \L_{f_t^\top} (g\i) \qedhere
\end{align*}
\end{proof}

\subsection{Variation of the volume density}\label{va:vo}

\begin{lem*}
The differential of the volume density
\begin{equation*}
\left\{ \begin{array}{ccl}
\Imm &\to &\Vol(M),\\
f &\mapsto &\vol(g)=\vol(f^*\g)
\end{array}\right.\end{equation*}
is given by
\begin{equation*}
D_{(f,f_t)} \vol(g) = 
\Tr^g\big(\g(\nabla f_t,Tf)\big) \vol(g)=
\Big(\on{div}^{g}(f_t^{\top})-\g\big(f_t^{\bot},\Tr^g(S)\big)\Big) \vol(g).
\end{equation*}
\end{lem*}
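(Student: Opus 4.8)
The plan is to reduce the statement to the previous lemma on the variation of the pullback metric together with the classical pointwise formula for the variation of a volume density under a change of the underlying metric. Writing $f:\R\x M\to N$ for a path of immersions with $\p_t f = f_t$, the density $\vol(g)$ depends on $f$ only through $g=f^*\g$, so by the chain rule it suffices to differentiate $g\mapsto\vol(g)$ and insert $\p_t g = D_{(f,f_t)}g$ from section~\ref{va:me}.

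First I would establish the pointwise identity
$$\p_t\,\vol(g)=\tfrac12\,\Tr^g(\p_t g)\,\vol(g).$$
In an oriented chart $u^1,\dots,u^m$ one has $\vol(g)=\sqrt{\det(g_{ij})}\,|du^1\wedge\dots\wedge du^m|$, and the Jacobi formula gives $\p_t\det(g_{ij})=\det(g_{ij})\,\Tr(g\i\p_t g)$; since $\Tr(g\i\,\cdot\,)$ applied to a $(0,2)$-tensor is exactly $\Tr^g$, this yields the claimed factor $\tfrac12\Tr^g(\p_t g)$. Substituting $\p_t g = 2\on{Sym}\,\g(\nabla f_t,Tf)$ from the variation-of-the-metric lemma and using that $\Tr^g$ only sees the symmetric part of its argument (because $g\i$ is symmetric), the factors $2$ and $\tfrac12$ cancel and one obtains the first equality $D_{(f,f_t)}\vol(g)=\Tr^g\big(\g(\nabla f_t,Tf)\big)\vol(g)$.

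For the second equality I would split $f_t=Tf.f_t^\top+f_t^\bot$ and feed the alternative expression $\p_t g=-2\g(f_t^\bot,S)+\L_{f_t^\top}g$ from section~\ref{va:me} into $\tfrac12\Tr^g(\p_t g)$. The normal term contributes $-\Tr^g\big(\g(f_t^\bot,S)\big)=-\g\big(f_t^\bot,\Tr^g(S)\big)$, i.e. the pairing of $f_t^\bot$ with the mean curvature vector, since $\Tr^g$ passes through the fixed normal factor $f_t^\bot$ and $\Tr^g(S)$ is by definition the mean curvature (section~\ref{no:we}). For the tangential term $\tfrac12\Tr^g(\L_{f_t^\top}g)$ the cleanest route is to invoke the equivariance lemma of section~\ref{va:ta}: since $f\mapsto\vol(g)$ is equivariant under pullback by diffeomorphisms of $M$, its tangential variation is the Lie derivative, $D_{(f,Tf.f_t^\top)}\vol(g)=\L_{f_t^\top}\vol(g)=\on{div}^g(f_t^\top)\,\vol(g)$, the last equality being the very definition of the divergence with respect to the volume density. (Alternatively one checks $\tfrac12\Tr^g(\L_X g)=\on{div}^g(X)$ directly in coordinates from $(\L_Xg)_{ij}=\nabla_iX_j+\nabla_jX_i$.) Adding the two contributions gives the second equality.

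I expect no serious obstacle here; the proof is essentially bookkeeping. The only points that need care are the trace conventions — making sure $\Tr^g$ contracts the two covariant slots with $g\i$ and hence ignores antisymmetric parts — and the identification of the tangential term as a divergence, for which the equivariance lemma makes the argument immediate and avoids any coordinate computation.
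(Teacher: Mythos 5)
Your proposal is correct, and the first equality is proved exactly as in the paper: the pointwise identity $\p_t\vol(g)=\tfrac12\Tr^g(\p_t g)\vol(g)$ via the Jacobi formula in an oriented chart, followed by substitution of $\p_t g=2\on{Sym}\g(\nabla f_t,Tf)$ from section~\ref{va:me}. For the second equality, however, you take a genuinely different route. The paper stays with the unsplit expression and expands it by the product rule,
\begin{equation*}
\Tr^g\big(\g(\nabla f_t,Tf)\big)
=\Tr^g\big(\nabla\g(f_t,Tf)-\g(f_t,\nabla Tf)\big)
=-\nabla^*\big((f_t^\top)^\flat\big)-\g\big(f_t^\bot,\Tr^g(S)\big),
\end{equation*}
using $\nabla Tf=S$, the adjoint identity $\nabla^*=-\Tr^g\nabla$ from section~\ref{no:co*}, and $\g(f_t,Tf)=(f_t^\top)^\flat$. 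You instead feed the already-split form $\p_t g=-2\g(f_t^\bot,S)+\L_{f_t^\top}g$ of section~\ref{va:me} into $\tfrac12\Tr^g(\cdot)$, read off the normal term directly, and dispatch the tangential term $\tfrac12\Tr^g(\L_{f_t^\top}g)=\on{div}^g(f_t^\top)$ via the equivariance lemma of section~\ref{va:ta} (or by coordinates). Both are valid; indeed the paper itself remarks after its proof that the tangential part could have been obtained from equivariance, so your argument realizes that remark in full. What the two approaches buy: yours is more modular, reusing the decomposition already established in \ref{va:me} and avoiding any manipulation of $\nabla^*$ and $S=\nabla Tf$; the paper's computation, on the other hand, establishes en route the identity $\Tr^g\big(\g(\nabla m,Tf)\big)=-\nabla^*\g(m,Tf)-\g\big(m,\Tr^g(S)\big)$, which is exactly the form reused later in the metric-gradient computation of section~\ref{so:me} and in the Lipschitz estimate of section~\ref{ge:li}, so it earns its keep beyond this lemma.
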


\begin{proof}
Let $g(t) \in \Ga(S^2_{>0}T^*M)$ be any curve of Riemannian metrics. Then
$$\p_t \vol(g)=\frac{1}{2}\on{Tr}(g\i.\p_t g)\vol(g).$$
This follows from the formula for $\vol(g)$ in a local oriented chart
$(u^1,\ldots u^m)$ on $M$:
\begin{align*}
\p_t\vol(g)&=\p_t \sqrt{\det( (g_{ij})_{ij})}\ du^1\wedge\cdots\wedge du^{m}\\
&=\frac{1}{2\sqrt{\det ((g_{ij})_{ij})}}\on{Tr}(\on{adj}(g) \p_t g)\
du^1\wedge\cdots\wedge du^{m}\\
&=\frac{1}{2\sqrt{\det ((g_{ij})_{ij})}}\on{Tr}(\det((g_{ij})_{ij})g^{-1}\p_t g)\
du^1\wedge\cdots\wedge du^{m}\\
&=\frac{1}{2}\on{Tr}(g\i.\p_t g)\vol(g)
\end{align*}
Now one can set $g = f^*\g$ and plug in the formula
$$\p_t g=\p_t (f^*\g)=2\on{Sym}\g(\nabla f_t,Tf)$$
from \ref{va:me}. 
This immediately proves the first formula:
\begin{align*}
\p_t \vol(g)&=\frac12 \Tr\big(g\i.2\on{Sym}\g(\nabla f_t,Tf) \big) 
=\Tr^g\big(\g(\nabla f_t,Tf) \big).
\end{align*}
Expanding this further yields the second formula:
\begin{align*}
\p_t \vol(g)&=\Tr^g\Big(\nabla\g( f_t,Tf)-\g( f_t,\nabla Tf) \Big)\\&
=\Tr^g\Big(\nabla\g( f_t,Tf)-\g( f_t,S) \Big)=-\nabla^*\g( f_t,Tf)-\g\big(f_t,\Tr^g(S)\big)\\&
=-\nabla^*\big((f_t^{\top})^{\flat}\big)-\g\big(f_t^{\bot},\Tr^g(S)\big)
=\on{div}(f_t^{\top})-\g\big(f_t^{\bot},\Tr^g(S)\big). 
\end{align*}
Here it has been used that
$$\nabla Tf = S \quad \text{and} \quad 
\on{div}(f_t^\top) = \Tr(\nabla f_t^\top)= \Tr^g\big((\nabla f_t^\top)^\flat\big)
= -\nabla^*\big((f_t^\top)^\flat\big).$$
Note that by \ref{va:ta}, the formula for the tangential variation 
would have followed also from the equivariance of the volume form with respect to pullbacks by 
diffeomorphisms. 
\end{proof}

\subsection{Variation of the covariant derivative}\label{va:co}

In this section, let $\nabla=\nabla^g=\nabla^{f^*\g}$ be the 
Levi-Civita covariant derivative acting on vector fields on $M$. 
Since any two covariant derivatives on $M$ differ by a tensor field, 
the first variation of $\nabla^{f^*\g}$ is tensorial. It is given by the 
tensor field $D_{(f,f_t)} \nabla^{f^*\g} \in \Ga(T^1_2 M)$.

\begin{lem*}
The tensor field $D_{(f,f_t)}\nabla^{f^*\g}$ is determined by the following relation
holding for vector fields $X,Y,Z$ on $M$:
\begin{multline*}
g\big((D_{(f,f_t)} \nabla)(X, Y),Z\big) = 
\frac12 (\nabla D_{(f,f_t)} g)\big( X \otimes Y \otimes Z
+ Y \otimes X \otimes Z	- Z \otimes X \otimes Y \big)
\end{multline*}
\end{lem*}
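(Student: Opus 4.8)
The plan is to avoid explicit Christoffel symbols and instead differentiate the two defining properties of the Levi-Civita connection along the path of metrics $g(t)=f(t)^*\g$: torsion-freeness and metric compatibility. Throughout I take $X,Y,Z$ to be fixed vector fields on $M$ (independent of the variation parameter), abbreviate $\dot g := D_{(f,f_t)} g$ and $\dot\nabla(X,Y) := (D_{(f,f_t)}\nabla)(X,Y)$, and reserve the undecorated $\nabla$ for the connection at the base immersion $f$. Setting $A(X,Y,Z):=g(\dot\nabla(X,Y),Z)$, the goal is to identify $A$ with the right-hand side of the claim, read as $(\nabla\dot g)(X\otimes Y\otimes Z)=(\nabla_X\dot g)(Y,Z)$.

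First I would extract a symmetry. Each $\nabla^{g(t)}$ is torsion-free, so $\nabla^{g(t)}_X Y-\nabla^{g(t)}_Y X=[X,Y]$, and since the bracket does not depend on $t$, differentiating gives $\dot\nabla(X,Y)=\dot\nabla(Y,X)$; equivalently $A$ is symmetric in its first two slots. Next I would exploit compatibility. For every $t$ one has $(\nabla^{g(t)}_X g(t))(Y,Z)=0$, that is $X\!\big(g(t)(Y,Z)\big)-g(t)(\nabla^{g(t)}_X Y,Z)-g(t)(Y,\nabla^{g(t)}_X Z)=0$. Differentiating this identity in $t$ at the base point (the product rule produces a $\dot g$ term and a $\dot\nabla$ term from each of the last two summands) and collecting the three purely-$\dot g$ contributions into a covariant derivative yields $(\nabla_X \dot g)(Y,Z)=g(\dot\nabla(X,Y),Z)+g(Y,\dot\nabla(X,Z))$. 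Writing $B(X,Y,Z):=(\nabla_X\dot g)(Y,Z)$ and using symmetry of $g$, this is the linear relation $B(X,Y,Z)=A(X,Y,Z)+A(X,Z,Y)$.

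Finally I would solve this relation for $A$ exactly as in the classical derivation of the Koszul formula. Forming $B(X,Y,Z)+B(Y,X,Z)-B(Z,X,Y)$ and substituting, the symmetry $A(Y,X,Z)=A(X,Y,Z)$ etc. makes the terms $A(X,Z,Y)$ and $A(Y,Z,X)$ cancel in pairs, leaving $2A(X,Y,Z)$. Hence $g(\dot\nabla(X,Y),Z)=\frac12\big[(\nabla_X\dot g)(Y,Z)+(\nabla_Y\dot g)(X,Z)-(\nabla_Z\dot g)(X,Y)\big]$, which is precisely the asserted formula. Note that $\dot g=D_{(f,f_t)}g$ has already been computed in section~\ref{va:me}, so the result could be made fully explicit by substitution if desired.

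The only delicate points are the differentiation of the compatibility identity, where one must keep track that both $g(t)$ and $\nabla^{g(t)}$ vary so that each term splits into a metric-variation piece and a connection-variation piece, and the permutation bookkeeping in the final solve. Neither is a genuine obstacle, but the permutation step is where sign and index errors creep in most easily; I would verify the three-term cancellation termwise. One could alternatively differentiate Koszul's formula for $\nabla$ directly and reorganize using torsion-freeness, but the compatibility-plus-symmetry route above is shorter and keeps the two structural inputs cleanly separated.
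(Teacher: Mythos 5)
Your proposal is correct, and it takes a route that differs in a meaningful way from the paper's. The paper differentiates the full Koszul formula for $g(\nabla_X Y,Z)$ in the direction $(f,f_t)$, then converts all Lie brackets into covariant derivatives via $[X,Y]=\nabla_XY-\nabla_YX$ and expands each term $X\big((D_{(f,f_t)}g)(Y,Z)\big)$ by the Leibniz rule to reorganize everything into the three $\nabla D_{(f,f_t)}g$ terms; this is a single direct computation, but the bookkeeping happens all at once. You instead differentiate the two axioms characterizing the Levi-Civita connection separately: torsion-freeness gives the symmetry $A(X,Y,Z)=A(Y,X,Z)$ of the variation tensor, and metric compatibility gives the relation $(\nabla_X\dot g)(Y,Z)=A(X,Y,Z)+A(X,Z,Y)$, after which the cyclic-permutation trick from the classical Koszul derivation solves for $A$. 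Both arguments are sound and of comparable length; your version has the advantage of cleanly separating the two structural inputs and of producing the symmetry of $D_{(f,f_t)}\nabla$ as an explicit intermediate fact (which is implicit but never isolated in the paper's computation), while the paper's version avoids the final permutation solve, where, as you yourself note, sign and slot errors are easiest to make. One small point worth stating explicitly in a written version: your derivative of the compatibility identity uses that $t\mapsto g(t)(\cdot,\cdot)$ and $t\mapsto\nabla^{g(t)}$ enter bilinearly, so the product rule splits each pairing into a $\dot g$ piece and a $\dot\nabla$ piece; and the left-hand side $X(\dot g(Y,Z))-\dot g(\nabla_XY,Z)-\dot g(Y,\nabla_XZ)$ is $(\nabla_X\dot g)(Y,Z)$ precisely because the undecorated $\nabla$ is the connection of the base metric $g$, consistent with the paper's convention $(\nabla\dot g)(X\otimes Y\otimes Z)=(\nabla_X\dot g)(Y,Z)$.
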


\begin{proof}
The defining formula for the covariant derivative is
\begin{align*}
g(\nabla_X Y,Z)&= \frac12 \Big[ Xg(Y,Z)+Yg(Z,X)-Zg(X,Y)\\&\qquad
-g(X,[Y,Z])+g(Y,[Z,X])+g(Z,[X,Y]) \Big].
\end{align*}
Taking the derivative $D_{(f,f_t)}$ yields
\begin{multline*}
(D_{(f,f_t)}g)(\nabla_X Y,Z)+g\big((D_{(f,f_t)}\nabla)(X, Y),Z\big)\\ 
\begin{aligned}
=\frac12 \Big[ & X\big((D_{(f,f_t)}g)(Y,Z)\big)+Y\big((D_{(f,f_t)}g)(Z,X)\big)-Z\big((D_{(f,f_t)}g)(X,Y)\big)\\&
-(D_{(f,f_t)}g)(X,[Y,Z])+(D_{(f,f_t)}g)(Y,[Z,X])+(D_{(f,f_t)}g)(Z,[X,Y]) \Big].
\end{aligned}
\end{multline*}
Then the result follows by replacing all Lie brackets in the above formula by covariant derivatives using 
$[X,Y]=\nabla_X Y - \nabla_Y X$
and by expanding all terms of the form $X\big((D_{(f,f_t}g)(Y,Z)\big)$ using
\begin{align*}
&X\big((D_{(f,f_t)}g)(Y,Z)\big)=\\&\qquad\qquad
(\nabla_X D_{(f,f_t)}g)(Y,Z)
+(D_{(f,f_t)}g)(\nabla_X Y,Z)
+(D_{(f,f_t)}g)(Y,\nabla_X Z).
\qedhere\end{align*}
\end{proof}

\subsection{Variation of the Laplacian}\label{va:la}

The Laplacian as defined in section \ref{no:la} 
can be seen as a smooth section of the bundle $L(T\Imm;T\Imm)$ over $\Imm$ since 
for every $f \in \Imm$ it is a mapping 
$$\De^{f^*\g}:T_f\Imm \to T_f\Imm.$$
The right way to define a first variation is to 
use the covariant derivative defined in section~\ref{sh:cov}.

\begin{lem*}
For $\De \in \Ga\big(L(T\Imm;T\Imm)\big)$, $f \in \Imm$ and $f_t,h \in T_f\Imm$ one has
\begin{align*}
(\nabla_{f_t} \Delta)(h) &=
\on{Tr}\big(g\i.(D_{(f,f_t)}g).g\i \nabla^2 h\big) 
-\nabla_{\big(\nabla^*(D_{(f,f_t)} g)+\frac12 d\on{Tr}^g(D_{(f,f_t)}g)\big)^\sharp}h \\&\qquad
+\nabla^*\big(R^{\g}(f_t,Tf)h\big)
-\Tr^g\Big( R^{\g}(f_t,Tf)\nabla h \Big).
\end{align*}
\end{lem*}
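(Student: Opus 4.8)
The plan is to start from the definition of the covariant derivative of a section of $L(T\Imm;T\Imm)$ given in section~\ref{sh:cov}, namely
$$(\nabla_{f_t}\De)(h) = \nabla_{f_t}\big(\De h\big) - \De\big(\nabla_{f_t}h\big),$$
and to insert $\De h = -\Tr^g(\nabla^2 h)$ from section~\ref{no:la}. Identifying $f_t$ with $\nabla_{\p_t}$ as in section~\ref{va:pa}, the task reduces to differentiating $\Tr^g(\nabla^2 h)$ in $t$. There are exactly three sources of $f$-dependence to track: the inverse metric $g\i$ hidden in $\Tr^g$, the Levi-Civita connection occurring in $\nabla^2=\nabla\nabla$ through $\nabla^2_{X,Y}=\nabla_X\nabla_Y-\nabla_{\nabla_X Y}$, and the failure of $\nabla_{\p_t}$ to commute with the spatial covariant derivatives, which by formula~\eqref{no:sw:r2} produces the ambient curvature $R^{\g}$. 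I would apply the Leibniz rule and peel these off one at a time.

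First I would treat the inverse metric. Writing the trace in a local frame as $\Tr^g(\nabla^2 h)=g^{ij}(\nabla^2 h)_{ij}$ and using $D_{(f,f_t)}g\i=-g\i(D_{(f,f_t)}g)g\i$ from section~\ref{va:in}, the variation of the contracting factor alone contributes $+\Tr\big(g\i(D_{(f,f_t)}g)g\i\,\nabla^2 h\big)$, which is precisely the first term of the lemma. What remains is $-\Tr^g\big(\nabla_{\p_t}(\nabla^2 h)\big)$, which I would organize so that the pieces in which $\nabla_{\p_t}$ passes all the way through to $h$ reassemble into $-\Tr^g\nabla^2(\nabla_{\p_t}h)=\De(\nabla_{f_t}h)$ and cancel against the second summand of $(\nabla_{f_t}\De)(h)$.

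To commute $\nabla_{\p_t}$ past the two spatial derivatives in $\nabla_{\p_i}\nabla_{\p_j}h$ I would apply the swapping formula~\eqref{no:sw:r2} twice, obtaining the clean term $\nabla_{\p_i}\nabla_{\p_j}\nabla_{\p_t}h$ together with $\nabla_{\p_i}\big(R^{\g}(f_t,Tf.\p_j)h\big)$ and $R^{\g}(f_t,Tf.\p_i)\nabla_{\p_j}h$. Contracting with $-g^{ij}$ and recognizing $-\Tr^g\nabla=\nabla^*$ from section~\ref{no:co*}, these two curvature contributions become exactly $\nabla^*\big(R^{\g}(f_t,Tf)h\big)$ and $-\Tr^g\big(R^{\g}(f_t,Tf)\nabla h\big)$, the third and fourth terms of the lemma.

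The main obstacle is the connection term, coming from differentiating $-\nabla_{\nabla_X Y}h$. Here I would evaluate the (tensorial) identity at an arbitrary point $x_0\in M$ in $g$-normal coordinates centred at $x_0$ at the fixed time, so that the Christoffel symbols vanish at $x_0$; this removes the spurious correction terms and, crucially, identifies the covariant $t$-derivative of $\nabla_{\p_i}\p_j$ with the variation tensor $(D_{(f,f_t)}\nabla)(\p_i,\p_j)$ of section~\ref{va:co}. Contracting $g^{ij}(D_{(f,f_t)}\nabla)(\p_i,\p_j)$ and feeding in the defining relation from section~\ref{va:co} (which expresses $D_{(f,f_t)}\nabla$ through $\nabla D_{(f,f_t)}g$), the symmetric pair of terms assembles into $\nabla^*(D_{(f,f_t)}g)$ while the remaining term gives $\tfrac12 d\,\Tr^g(D_{(f,f_t)}g)$, so that this contribution equals $-\nabla_{(\nabla^*(D_{(f,f_t)}g)+\frac12 d\Tr^g(D_{(f,f_t)}g))^\sharp}h$. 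Getting the signs and the factor $\tfrac12$ right in this last assembly — and confirming that the normal-coordinate choice correctly disposes of the covariant-versus-ordinary $t$-derivative bookkeeping — is the delicate part; everything else is routine once the three sources of variation have been separated.
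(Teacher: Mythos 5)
Your proposal is correct and follows essentially the same route as the paper's proof: expand $(\nabla_{f_t}\De)(h)$ by the definition of the covariant derivative on $L(T\Imm;T\Imm)$, vary the inverse metric hidden in $\Tr^g$ to get the first term, swap $\nabla_{\p_t}$ past the two spatial derivatives via \eqref{no:sw:r2} to produce the two curvature terms, identify the $t$-derivative of $\nabla_X Y$ with the variation tensor $(D_{(f,f_t)}\nabla)(X,Y)$, and finish by computing $\Tr^g(D_{(f,f_t)}\nabla)$ from the relation in section~\ref{va:co}. The only difference is bookkeeping: you evaluate pointwise in $g$-normal coordinates, while the paper works globally with time-constant vector fields, identifies $[\p_t,\nabla^{f^*\g}_X Y]=(D_{(f,f_t)}\nabla)(X,Y)$ as a Lie bracket, and therefore has to track explicitly the cancellation of the $R^{\g}(f_t,Tf.\nabla_X Y)h$ terms, which your normal-coordinate choice kills at the base point automatically.
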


\begin{proof}
Let $f$ be a curve of immersions and $h$ a vector field along $f$. One has
$$\De : \Imm \to L(T\Imm;T\Imm), \quad \De \o f = \De^{f^*\g} : \R \to \Imm \to L(T\Imm;T\Imm). $$
Using property~\ref{no:co}.5 one gets
\begin{align*}
(\nabla_{f_t} \De)(h) &=
\big(\nabla_{\p_t} (\De \o f)\big)(h)=
\nabla_{\p_t} \De h - \De \nabla_{\p_t} h\\&=
-\nabla_{\p_t} \Tr^g(\nabla^2 h) - \De \nabla_{\p_t} h \\&=
\Tr\big(g\i (D_{(f,f_t)} g) g\i \nabla^2 h\big)- \Tr^g(\nabla_{\p_t} \nabla^2 h) 
- \De(\nabla_{\p_t} h).
\end{align*}
The term $\Tr^g(\nabla_{\p_t} \nabla^2 h)$ will be treated further. 
Let $X,Y$ be vector fields on $M$ that are constant in time. 
When they are seen as vector fields on $\R \x M$ then $\nabla_{\p_t}X=\nabla_{\p_t}Y=0$.
Using the formulas from section~\ref{no:sw} to swap covariant derivatives one gets
\begin{align*}
&(\nabla_{\p_t}\nabla^2 h)(X,Y)=
\nabla_{\p_t}(\nabla_X\nabla_Y h-\nabla_{\nabla_X Y}h)
\\&\qquad=
\nabla_X\nabla_{\p_t}\nabla_Y h+R^{\g}(f_t,Tf.X)\nabla_Y h-\nabla_{\p_t}\nabla_{\nabla_X Y}h
\\&\qquad=
\nabla_X\nabla_Y\nabla_{\p_t} h+\nabla_X\big(R^{\g}(f_t,Tf.Y)h\big)
+R^{\g}(f_t,Tf.X)\nabla_Y h\\&\qquad\qquad
-\nabla_{\nabla_X Y}\nabla_{\p_t}h-\nabla_{[\p_t,\nabla_X Y]}h
-R^{\g}(f_t,Tf.\nabla_X Y)h.
\end{align*}
The Lie bracket is
\begin{align*}
[\p_t,\nabla^{f^*\g}_X Y] = (D_{(f,f_t)}\nabla)(X,Y)
\end{align*}
since (now without the slight abuse of notation)
\begin{align*}
[(\p_t,0_M),(0_\R,\nabla^{f^*\g}_X Y)]
&=\p_s|_0\ TFl_{-s}^{(\p_t,0_M)} \o \nabla_X Y \o Fl_s^{(\p_t,0_M)} 
\\&= \big(0_\R,(D_{(f,f_t)}\nabla)(X,Y)\big).
\end{align*}
Therefore
\begin{align*}
&(\nabla_{\p_t}\nabla^2 h)(X,Y)=\\&\qquad=
(\nabla^2\nabla_{\p_t} h)(X,Y)+\nabla_X\big(R^{\g}(f_t,Tf.Y)h\big)
+R^{\g}(f_t,Tf.X)\nabla_Y h\\&\qquad\qquad
-\nabla_{(D_{(f,f_t)}\nabla)(X,Y)}h-R^{\g}(f_t,Tf.\nabla_X Y)h
\\&\qquad=
(\nabla^2\nabla_{\p_t} h)(X,Y)
+(\nabla_{Tf.X} R^{\g})(f_t,Tf.Y)h
+R^{\g}(\nabla_X f_t,Tf.Y)h\\&\qquad\qquad
+R^{\g}(f_t,\nabla_X Tf.Y)h
+R^{\g}(f_t,Tf.Y)\nabla_X h
+R^{\g}(f_t,Tf.X)\nabla_Y h\\&\qquad\qquad
-\nabla_{(D_{(f,f_t)}\nabla)(X,Y)}h-R^{\g}(f_t,Tf.\nabla_X Y)h
\\&\qquad=
(\nabla^2\nabla_{\p_t} h)(X,Y)
+(\nabla_{Tf.X} R^{\g})(f_t,Tf.Y)h
+R^{\g}(\nabla_X f_t,Tf.Y)h\\&\qquad\qquad
+R^{\g}\big(f_t,(\nabla Tf)(X,Y)\big)h
+R^{\g}(f_t,Tf.Y)\nabla_X h
+R^{\g}(f_t,Tf.X)\nabla_Y h\\&\qquad\qquad
-\nabla_{(D_{(f,f_t)}\nabla)(X,Y)}h
\\&\qquad=
(\nabla^2\nabla_{\p_t} h)(X,Y)
+ \nabla_X\big(R^{\g}(f_t,Tf.Y)h\big)
+R^{\g}(f_t,Tf.X)\nabla_Y h \\&\qquad\qquad
-\nabla_{(D_{(f,f_t)}\nabla)(X,Y)}h
\end{align*}
Putting together all terms one obtains
\begin{align*}
(\nabla_{f_t} \De)(h) &=
\Tr\big(g\i (D_{(f,f_t)} g) g\i \nabla^2 h\big)
-\Tr^g\Big(  \nabla\big(R^{\g}(f_t,Tf)h\big) \Big)\\&\qquad
-\Tr^g\Big( R^{\g}(f_t,Tf)\nabla h \Big)
+\nabla_{\Tr^g(D_{(f,f_t)}\nabla)}h\\&=
\Tr\big(g\i (D_{(f,f_t)} g) g\i \nabla^2 h\big)
+\nabla^*\big(R^{\g}(f_t,Tf)h\big) \\&\qquad
-\Tr^g\Big( R^{\g}(f_t,Tf)\nabla h \Big)
+\nabla_{\Tr^g(D_{(f,f_t)}\nabla)}h.
\end{align*}
It remains to calculate $\Tr^g(D_{(f,f_t)}\nabla)$. 
Using the variational formula for $\nabla$ from section~\ref{va:co}
one gets for any vector field $Z$ and a $g$-orthonormal frame $s_i$
\begin{align*}
&g\big(\Tr^g(D_{(f,f_t)}\nabla),Z\big) \\&\qquad=
\frac12 \sum_i (\nabla D_{(f,f_t)} g)\big( s_i \otimes s_i \otimes Z
+ s_i \otimes s_i \otimes Z	- Z \otimes s_i \otimes s_i \big) \\&\qquad=
-\big(\nabla^*(D_{(f,f_t)}g)\big)(Z) - \frac12 \Tr^g(\nabla_Z D_{(f,f_t)}g) \\&\qquad=
-\big(\nabla^*(D_{(f,f_t)}g)\big)(Z) - \frac12\nabla_Z \Tr^g(D_{(f,f_t)}g) \\&\qquad=
-\Big(\nabla^*(D_{(f,f_t)}g) + \frac12 d \Tr^g(D_{(f,f_t)}g)\Big)(Z) \\&\qquad=
-g\Big(\big(\nabla^*(D_{(f,f_t)}g) + \frac12 d \Tr^g(D_{(f,f_t)}g)\big)^\sharp,Z\Big) .
\end{align*}
Therefore 
\begin{equation*}\label{va:la:eq2}
\Tr^g(D_{(f,f_t)}\nabla) = -\big(\nabla^*(D_{(f,f_t)}g) + \frac12 d \Tr^g(D_{(f,f_t)}g)\big)^\sharp.
\qedhere
\end{equation*}
\end{proof}

\section{Sobolev-type metrics}\label{so}

\begin{ass*}
Let $P$ be a smooth section of the bundle $L(T\Imm;T\Imm)$ over $\Imm$
such that at every $f \in \Imm$ the operator 
$$P_f:T_f\Imm \to T_f\Imm$$ 
is an elliptic pseudo differential operator that is symmetric and positive with respect to 
the $H^0$-metric on $\Imm$, 
$$H^0_f(h,k) = \int_M \g(h,k)\vol(g).$$
\end{ass*}
Note that an elliptic symmetric operator is self-adjoint by \cite[26.2]{Shubin1987}.
Then $P$  induces a  metric on the set of immersions, namely
$$G^P_f(h,k)=\int_M \g(P_fh,k) \vol(g) \quad \text{for} \quad f \in \Imm, \quad h,k \in T_f\Imm.$$ 
The metric $G^P$ is positive definite since $P$ is assumed to be positive with respect to the 
$H^0$-metric.                                           
In this section, the geodesic equation on $\Imm$ and $B_i$ for the $G^P$-metric will be calculated
in terms of the operator $P$ and it will be proven that it is well-posed under some assumptions.

\subsection{Invariance of $P$ under reparametrizations}\label{so:in}

\begin{ass*}
It will be assumed that $P$ is invariant under 
the action of the reparametrization group $\Diff(M)$ acting on $\Imm(M,N)$, i.e. 
$$P=(r^{\ph})^* P \qquad \text{for all } \ph \in \Diff(M).$$ 
\end{ass*}

For any $f \in \Imm$ and $\ph \in \Diff(M)$ this means
$$P_f = (T_fr^{\ph})\i \o P_{f \o \ph} \o T_fr^{\ph}.$$
Applied to $h \in T_f\Imm$ this means 
$$P_f(h) \o \ph = P_{f \o \ph}(h \o \ph).$$

The invariance of $P$ implies that the induced metric $G^P$ is invariant 
under the action of $\Diff(M)$, too. Therefore it induces a 
unique metric on $B_i$ as explained in section~\ref{sh:rish}

\subsection{The adjoint of $\nabla P$}\label{so:ad}

The following construction is needed to express the metric gradient $H$ which is part of
the geodesic equation. $H_f$ arises from the metric $G_f$ by differentiating it with 
respect to its foot point $f \in \Imm$.
Since $G$ is defined via the operator $P$, one also needs to differentiate $P_f$ with respect to its 
foot point. As for the metric, this is accomplished by the covariant derivate.
For $P \in \Ga\big(L(T\Imm;T\Imm)\big)$ and $m \in T\Imm$ one has
$$\nabla_m P \in \Ga\big(L(T\Imm;T\Imm)\big), \qquad \nabla P \in \Ga\big(L(T^2\Imm;T\Imm)\big).$$
See section~\ref{sh:cov} for more details.

\begin{ass*}
It is assumed that there exists a smooth \emph{adjoint} 
$$\adj{\nabla P} \in \Ga\big(L^2(T\Imm;T\Imm)\big)$$ 
of $\nabla P$ in the following sense: 
\begin{equation*}
\int_M \g\big((\nabla_m P)h,k\big) \vol(g)=\int_M \g\big(m,\adj{\nabla P}(h,k)\big) \vol(g).
\end{equation*}
\end{ass*}

The existence of the adjoint needs to be checked in each specific example, 
usually by partial integration. 
For the operator $P=1+A\Delta^p$, the existence of the adjoint will be proven
and explicit formulas will be calculated in sections~\ref{la:ad} and \ref{la:ge}. 

\begin{lem*}
If the adjoint of $\nabla P$ exists, then its tangential part is determined 
by the invariance of $P$ with respect to reparametrizations:
\begin{align*}
\adj{\nabla P}(h,k)^\top &=\big(\g(\nabla Ph,k)-\g(\nabla h,Pk)\big)^\sharp \\
&=\grad^g \g(Ph,k)-\big(\g(Ph,\nabla k)+\g(\nabla h,Pk)\big)^\sharp
\end{align*}  
for $f \in \Imm, h,k \in T_f\Imm$. 
\end{lem*}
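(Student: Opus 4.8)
The plan is to extract the tangential part of $\adj{\nabla P}$ by differentiating the invariance relation $P=(r^\ph)^*P$ along a flow of diffeomorphisms, converting the reparametrization invariance into an infinitesimal identity, and then matching it against the defining equation for $\adj{\nabla P}$. Concretely, I would take a vector field $X$ on $M$ with flow $Fl^X_t\in\Diff(M)$ and set $m=Tf.X$, the tangential variation of $f$ corresponding to moving $f$ along the reparametrization $Fl^X_t$. By the invariance assumption in section~\ref{so:in}, the operator $P$ does not change its intrinsic action under reparametrization, so the covariant derivative $\nabla_{Tf.X}P$ should be expressible purely through the Lie-derivative action of $X$ — this is exactly the surface analogue of the equivariant-tensor lemma in section~\ref{va:ta}. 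The infinitesimal form of $P=(r^\ph)^*P$ gives $(\nabla_{Tf.X}P)(h)=\L_X(Ph)-P(\L_X h)$ in the appropriate sense, where the Lie derivatives arise from differentiating $h\circ Fl^X_t$ and $(Ph)\circ Fl^X_t$.

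First I would write out $\int_M \g\big((\nabla_{Tf.X}P)h,k\big)\vol(g)$ using this infinitesimal invariance, turning the integrand into terms involving $\L_X(Ph)$ and $P(\L_X h)$ paired against $k$. The key move is that $\L_X$ acting on a vector field along $f$ combines a covariant-derivative term $\nabla_X(\cdot)$ with a term involving $\nabla_{(\cdot)}Tf.X$; using the torsion-free swapping formula~\eqref{no:sw:to} and that $\nabla\g=0$, I would integrate by parts the $\nabla_X$-pieces. Because $m=Tf.X$ is tangential, the pairing $\int_M\g(m,\adj{\nabla P}(h,k))\vol(g)=\int_M g\big(X,\adj{\nabla P}(h,k)^\top\big)\vol(g)$ isolates exactly the tangential component $\adj{\nabla P}(h,k)^\top$. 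Matching the integrand against $g(X,\cdot)$ for arbitrary $X$ then reads off a pointwise identity for $\adj{\nabla P}(h,k)^\top$, which after raising indices with $\sharp$ should yield $\big(\g(\nabla Ph,k)-\g(\nabla h,Pk)\big)^\sharp$. The second displayed formula follows from the first by expanding $\g(\nabla Ph,k)=\grad^g\g(Ph,k)-\g(Ph,\nabla k)$ via the product rule for $\nabla$ together with $\nabla\g=0$.

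The main obstacle I anticipate is bookkeeping the Lie-derivative terms correctly, in particular verifying that the divergence contribution from $\L_X\vol(g)=\on{div}(X)\vol(g)$ (see section~\ref{va:vo}) cancels cleanly against the base-point tangential terms, so that only the advertised gradient-type expression survives. A secondary subtlety is that the invariance identity $P_f(h)\circ\ph=P_{f\circ\ph}(h\circ\ph)$ from section~\ref{so:in} must be differentiated while carefully tracking that $P$ is a section over $\Imm$, so its covariant derivative in the reparametrization direction picks up both the intrinsic variation and the boundary-free integration-by-parts terms; I expect that only the tangential component is pinned down this way — the normal component of $\adj{\nabla P}$ genuinely requires the explicit form of $P$ and is therefore left to sections~\ref{la:ad} and~\ref{la:ge}, consistent with the lemma asserting only the tangential part.
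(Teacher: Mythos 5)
Your overall skeleton is the same as the paper's: differentiate the invariance $P_f(h)\circ\ph=P_{f\circ\ph}(h\circ\ph)$ along the flow $Fl^X_t$ of a vector field $X$ on $M$, pair the result against tangential vectors $m=Tf.X$ in the defining identity of the adjoint, read off the tangential part, and obtain the second displayed line from the first by the product rule for $\nabla$ together with $\nabla\g=0$. But two of your intermediate steps are genuinely wrong or missing. First, the infinitesimal form of the invariance involves plain covariant derivatives, not Lie derivatives with torsion corrections: by property~\ref{no:co:prop5} of section~\ref{no:co}, for \emph{any} section $u$ of $f^*TN$ one has $\nabla_{\p_t|_0}(u\circ Fl^X_t)=\nabla_X u$, nothing more, so differentiating the invariance gives exactly
\begin{equation*}
(\nabla_{Tf.X}P)(h)=\nabla_X(P_f h)-P_f(\nabla_X h).
\end{equation*}
Your proposed ``key move'' --- that this derivative combines $\nabla_X(\cdot)$ with a term $\nabla_{(\cdot)}Tf.X$ via the torsion formula \eqref{no:sw:to} --- is incorrect: such a correction term is not even well defined for a non-tangential section (the lower slot of $\nabla$ must be a tangent vector to $M$), and carrying it along would produce spurious terms that do not cancel. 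The torsion formula plays no role in this proof.

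Second, and this is the decisive gap: the term $\g(\nabla h,Pk)$ in the claimed formula can only come from the standing assumption of section~\ref{so} that $P_f$ is \emph{symmetric} with respect to the $H^0$-inner product, i.e.
\begin{equation*}
\int_M\g\big(P(\nabla_X h),k\big)\vol(g)=\int_M\g\big(\nabla_X h,Pk\big)\vol(g),
\end{equation*}
which you never invoke. Integration by parts cannot substitute for this step: $P$ is an abstract, in general nonlocal, pseudo-differential operator, and no amount of differentiating under the integral will move it from one slot to the other. Once the symmetry is used, no integration by parts is needed at all --- the integrand $\g(\nabla_X Ph,k)-\g(\nabla_X h,Pk)$ is already pointwise of the form $g\big(X,(\g(\nabla Ph,k)-\g(\nabla h,Pk))^\sharp\big)$, and one concludes by nondegeneracy of the $H^0$-pairing. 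Consequently the ``main obstacle'' you anticipate, a cancellation of divergence terms coming from $\L_X\vol(g)$, never arises: there are no divergence terms in the correct computation.
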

\begin{proof}
Let $X$ be a vector field on $M$. Then
\begin{align*}
(\nabla_{Tf.X} P)(h) &= 
(\nabla_{\p_t|_0} P_{f\o Fl_t^X})(h \o Fl_0^X) \\&= 
\nabla_{\p_t|_0}\big(P_{f\o Fl_t^X}(h \o Fl_t^X)\big) -
P_{f\o Fl_0^X} \big( \nabla_{\p_t|_0}(h \o Fl_t^X)\big) \\&=
\nabla_{\p_t|_0}\big(P_f(h) \o Fl_t^X\big) -
P_f \big( \nabla_{\p_t|_0}(h \o Fl_t^X)\big) \\&=
\nabla_X\big(P_f(h)) - P_f \big( \nabla_X h\big) 
\end{align*}
Therefore one has for $m,h,k \in T_f\Imm$ that
\begin{align*} &
\int_M g\big(m^\top,\adj{\nabla P}(h,k)^\top \big) \vol(g) =
\int_M \g\big(Tf.m^\top,\adj{\nabla P}(h,k)\big) \vol(g) \\&\qquad=
\int_M \g\big((\nabla_{Tf.m^\top} P)h,k\big) \vol(g)  \\&\qquad=
\int_M \g\big(\nabla_{m^\top}(Ph)-P(\nabla_{m^\top}h),k\big) \vol(g) \\&\qquad=
\int_M \big(\g(\nabla_{m^\top}Ph,k)-\g(\nabla_{m^\top}h,Pk)\big) \vol(g) \\&\qquad=
\int_M g\Big(m^\top,\big(\g(\nabla Ph,k)-\g(\nabla h,Pk)\big)^\sharp\Big) \vol(g).
\qedhere
\end{align*} 
\end{proof}

\subsection{Metric gradients}\label{so:me}

As explained in section~\ref{sh:ge}, the geodesic equation can be expressed
in terms of the metric gradients $H$ and $K$. 
These gradients will be computed now. We shall use that $P_f$ is invertible on the space of smooth 
sections. This follows because $P_f$ is an elliptic, self-adjoint, and positive operator, see the 
beginning of the proof of theorem \ref{so:we} for a detailed argument. 

\begin{lem*}
If $\adj{\nabla P}$ exists, then also $H$ and $K$ exist and are given by
\begin{align*}
K_f(h,m)&=P_f\i\Big((\nabla_m P)h+\Tr^g\big(\g(\nabla m,Tf)\big).Ph\Big) \\
H_f(h,k)&=P_f\i\Big(\adj{\nabla P}(h,k)^\bot-Tf.\big(\g(Ph,\nabla k)+\g(\nabla h,Pk)\big)^\sharp
\\&\qquad -\g(Ph,k).\Tr^g(S)\Big).
\end{align*}
\end{lem*}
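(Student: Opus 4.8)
The plan is to compute the metric gradients $K_f$ and $H_f$ directly from their defining equation in section~\ref{sh:me}, namely
$$(\nabla_m G^P)(h,k) = G^P\big(K(h,m),k\big) = G^P\big(m, H(h,k)\big),$$
by differentiating the explicit formula $G^P_f(h,k) = \int_M \g(P_f h,k)\,\vol(g)$ along a direction $m$. I would apply the covariant derivative $\nabla_m$ defined in section~\ref{sh:cov}, using that $\nabla$ respects $\g$, and the variation of the volume density from section~\ref{va:vo}. Concretely, I expect
$$(\nabla_m G^P)(h,k) = \int_M \g\big((\nabla_m P)h,\,k\big)\vol(g) + \int_M \g(P_f h,k)\,\Tr^g\big(\g(\nabla m,Tf)\big)\vol(g),$$
where the first term records the variation of the operator $P$ and the second the variation of the volume density (using $D_{(f,m)}\vol(g)=\Tr^g(\g(\nabla m,Tf))\vol(g)$). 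The terms $G^P(\nabla_m h,k)$ and $G^P(h,\nabla_m k)$ cancel against $\nabla_m(\g(P_fh,k))$ by the derivation property and the fact that $\nabla$ respects $\g$.

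\emph{Reading off $K$} is then immediate: since $G^P(K(h,m),k)=\int_M\g(P_f K(h,m),k)\vol(g)$, I can match integrands to obtain $P_f K(h,m) = (\nabla_m P)h + \Tr^g(\g(\nabla m,Tf))\,Ph$, and apply $P_f\i$ (invertible on smooth sections, as noted in section~\ref{so:me}). This gives the stated formula for $K_f$ with no further work.

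\emph{Reading off $H$} is the substantive step and where the main obstacle lies. Starting from $G^P(m,H(h,k)) = \int_M\g(P_f m, H(h,k))\vol(g)$ and the same expression for $(\nabla_m G^P)(h,k)$, I must move all the $m$-dependence onto a single factor $\g(P_f m,\cdot)$ or, equivalently, onto $\g(m,\cdot)$ after using self-adjointness of $P$. The term $\int_M\g((\nabla_m P)h,k)\vol(g)$ is handled by the existence-of-adjoint assumption in section~\ref{so:ad}, rewriting it as $\int_M\g(m,\adj{\nabla P}(h,k))\vol(g)$. The harder term is $\int_M\g(Ph,k)\Tr^g(\g(\nabla m,Tf))\vol(g)$: here $m$ appears differentiated inside a trace, so I must integrate by parts to free it. Using $\Tr^g(\g(\nabla m,Tf)) = \Tr^g(\g(\nabla m,Tf))$ and recognizing this as (essentially) a divergence paired against the scalar $\g(Ph,k)$, partial integration will transfer the derivative onto $\g(Ph,k)$ and onto $Tf$, the latter producing the mean curvature $\Tr^g(S)$ since $\nabla Tf = S$. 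This is precisely the mechanism that generates the term $-\g(Ph,k)\Tr^g(S)$ and, from differentiating the scalar $\g(Ph,k)$, the tangential term $-Tf.\big(\g(Ph,\nabla k)+\g(\nabla h,Pk)\big)^\sharp$.

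The delicate bookkeeping — and the step I expect to be the main obstacle — is correctly splitting $\adj{\nabla P}(h,k)$ into tangential and normal parts and checking that the tangential contributions combine with the integration-by-parts remainder to yield exactly $\adj{\nabla P}(h,k)^\bot$ plus the displayed tangential correction. The tangential part of $\adj{\nabla P}$ is already pinned down by the Lemma in section~\ref{so:ad}, namely $\adj{\nabla P}(h,k)^\top = \grad^g\g(Ph,k) - \big(\g(Ph,\nabla k)+\g(\nabla h,Pk)\big)^\sharp$; I would substitute this in and verify that the $\grad^g\g(Ph,k)$ piece is precisely what cancels against the boundary-free partial-integration term coming from differentiating $\g(Ph,k)$ against the divergence. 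After this cancellation, only $\adj{\nabla P}(h,k)^\bot$, the explicit tangential correction $-Tf.(\cdots)^\sharp$, and the mean-curvature term survive, and applying $P_f\i$ gives the claimed formula for $H_f$.
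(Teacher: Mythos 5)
Your proposal is correct and follows essentially the same route as the paper's proof: you compute $(\nabla_m G^P)(h,k)=\int_M \g\big((\nabla_m P)h,k\big)\vol(g)+\int_M \g(Ph,k)\,D_{(f,m)}\vol(g)$, read off $K$ by matching integrands, and obtain $H$ by rewriting the first term via $\adj{\nabla P}$, integrating the volume-variation term by parts to produce $-Tf.\grad^g\g(Ph,k)-\g(Ph,k)\Tr^g(S)$, and cancelling the high-order term $\grad^g\g(Ph,k)$ against the tangential part of the adjoint from section~\ref{so:ad}. The cancellation mechanism you single out as the delicate step is exactly the one the paper carries out, so there is no gap.
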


\begin{proof}
For vector fields $m,h,k$ on $\Imm$ one has
\begin{equation}\label{so:me:na}
\begin{aligned}
&(\nabla_m G^P)(h,k)=
D_{(f,m)} \int_M \g(Ph,k) \vol(g) \\&\qquad\qquad
- \int_M \g\big(P(\nabla_m h),k\big) \vol(g)
- \int_M \g(Ph,\nabla_m k) \vol(g)
\\&\qquad=
\int_M D_{(f,m)}\g(Ph,k) \vol(g)
+ \int_M \g(Ph,k) D_{(f,m)}\vol(g)\\&\qquad\qquad
- \int_M \g\big(P(\nabla_m h),k\big) \vol(g)
- \int_M \g(Ph,\nabla_m k) \vol(g)
\\&\qquad=
\int_M \g\big(\nabla_m(Ph),k\big) \vol(g)
+ \int_M \g(Ph,\nabla_m k) \vol(g)\\&\qquad\qquad
+ \int_M \g(Ph,k) D_{(f,m)}\vol(g)\\&\qquad\qquad
- \int_M \g\big(P(\nabla_m h),k\big) \vol(g)
- \int_M \g(Ph,\nabla_m k) \vol(g)
\\&\qquad=
\int_M \g\big((\nabla_m P)h,k\big) \vol(g)
+ \int_M \g(Ph,k) D_{(f,m)}\vol(g)
\end{aligned}
\end{equation}
One immediately gets the $K$-gradient 
by plugging in the variational formula \ref{va:vo} for the volume form:
$$K_f(h,m)=P_f\i\Big((\nabla_m P)h+\Tr^g\big(\g(\nabla m,Tf)\big).Ph\Big).$$
To calculate the $H$-gradient, one rewrites equation~\eqref{so:me:na}
using the definition of the adjoint:
\begin{equation*}
\begin{aligned}
(\nabla_m G^P)(h,k)=
\int_M \g\big(m,\adj{\nabla P}(h,k)\big) \vol(g) + \int_M \g(Ph,k) D_{(f,m)}\vol(g).
\end{aligned}
\end{equation*}
Now the second summand is treated further using again the variational formula of the volume density 
from section~\ref{va:vo}:
\begin{align*}
&\int_M \g(Ph,k) D_{(f,m)}\vol(g)=
\int_M \g(Ph,k) \Tr^g\big(\g(\nabla m,Tf)\big) \vol(g) \\
&\qquad=
\int_M \g(Ph,k) \Tr^g\big(\nabla\g(m,Tf)-\g(m,\nabla Tf)\big) \vol(g) \\
&\qquad=
\int_M \g(Ph,k) \Big(-\nabla^*\g(m,Tf)-\g\big(m,\Tr^g(S)\big)\Big) \vol(g) \\
&\qquad=
-\int_M g^0_1\big(\nabla\g(Ph,k),\g(m,Tf)\big)\vol(g)-\int_M \g(Ph,k) \g\big(m,\Tr^g(S)\big) \vol(g) \\
&\qquad=
\int_M \g\big(m,-Tf.\grad^g\g(Ph,k)-\g(Ph,k) \Tr^g(S)\big) \vol(g) 
\end{align*} 
Collecting terms one gets that 
\begin{align*}
&G_f^P(H_f(h,k),m)=(\nabla_m G^P)(h,k)\\&\qquad=
\int_M \g\big(m,\adj{\nabla P}(h,k)-Tf.\grad^g\g(Ph,k)-\g(Ph,k) \Tr^g(S)\big) \vol(g) 
\end{align*}
Thus the $H$-gradient is given by
\begin{align*}
H_f(h,k)=P\i\Big(\adj{\nabla P}(h,k)-Tf.\grad^g\g(Ph,k) -\g(Ph,k).\Tr^g(S)\Big)
\end{align*}
The highest order term $\grad^g\g(Ph,k)$ cancels out when taking into 
account the formula for the tangential part of the adjoint from section~\ref{so:ad}:
\begin{align*}
H_f(h,k)=P\i\Big(&\adj{\nabla P}(h,k)^\bot-Tf.\big(\g(Ph,\nabla k)+\g(\nabla h,Pk)\big)^\sharp
\\& -\g(Ph,k).\Tr^g(S)\Big). 
\qedhere
\end{align*}
\end{proof}

\subsection[Geodesic equation]{Geodesic equation on immersions}\label{so:ge}

The geodesic equation for a general metric on $\Imm(M,N)$ has been calculated
in section~\ref{sh:ge} and reads as 
$$\nabla_{\p_t} f_t = \frac12 H_f(f_t,f_t) - K_f(f_t,f_t). $$
Plugging in the formulas for $H,K$ derived in the last section yields the following theorem. 

\begin{thm*}
The geodesic equation for a Sobolev-type metric $G^P$ on immersions is given by
\begin{align*}
\nabla_{\p_t} f_t= &\frac12P\i\Big(\adj{\nabla P}(f_t,f_t)^\bot-2.Tf.\g(Pf_t,\nabla f_t)^\sharp
-\g(Pf_t,f_t).\Tr^g(S)\Big)
\\&
-P\i\Big((\nabla_{f_t}P)f_t+\Tr^g\big(\g(\nabla f_t,Tf)\big) Pf_t\Big).
\end{align*}
\end{thm*}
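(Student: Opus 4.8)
The plan is to simply substitute the explicit formulas for the metric gradients $H_f$ and $K_f$ from the lemma in section~\ref{so:me} into the general geodesic equation from section~\ref{sh:ge}, namely $\nabla_{\p_t}f_t=\frac12 H_f(f_t,f_t)-K_f(f_t,f_t)$. This is a direct computation rather than a deep argument: the theorem in section~\ref{sh:ge} has already done the genuine variational work (the first-variation-of-energy calculation), and the lemma in section~\ref{so:me} has already expressed $H$ and $K$ in terms of $P$, $\nabla P$, its adjoint $\adj{\nabla P}$, and the second fundamental form $S$. So the proof is essentially an act of bookkeeping, specializing the two abstract gradients to the concrete Sobolev-type metric $G^P$.

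Concretely, I would proceed as follows. First I would recall from section~\ref{so:me} that
\begin{align*}
H_f(h,k)&=P\i\Big(\adj{\nabla P}(h,k)^\bot-Tf.\big(\g(Ph,\nabla k)+\g(\nabla h,Pk)\big)^\sharp-\g(Ph,k).\Tr^g(S)\Big),\\
K_f(h,m)&=P\i\Big((\nabla_m P)h+\Tr^g\big(\g(\nabla m,Tf)\big).Ph\Big).
\end{align*}
Then I would set $h=k=m=f_t$ in both. In $H_f(f_t,f_t)$ the two tangential terms $\g(Pf_t,\nabla f_t)^\sharp$ and $\g(\nabla f_t,Pf_t)^\sharp$ coincide (since $P$ is $H^0$-symmetric, $\g(Pf_t,\nabla f_t)=\g(\nabla f_t,Pf_t)$), so they combine into a single factor of $2$, yielding the term $-2.Tf.\g(Pf_t,\nabla f_t)^\sharp$. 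Multiplying by $\tfrac12$ gives the first line of the claimed equation. In $K_f(f_t,f_t)$ the substitution is immediate, producing $P\i\big((\nabla_{f_t}P)f_t+\Tr^g(\g(\nabla f_t,Tf))Pf_t\big)$, which is exactly the second line.

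The only point requiring a moment's care — and the closest thing to an obstacle — is confirming the coefficient $2$ in the tangential term, i.e. that the symmetrization $h=k=f_t$ genuinely merges the two separate contributions $\g(Ph,\nabla k)$ and $\g(\nabla h,Pk)$. This follows from the self-adjointness of $P$ with respect to the $H^0$-metric, which was assumed in the standing assumption at the start of section~\ref{so}. Everything else is a transcription: the normal term $\adj{\nabla P}(f_t,f_t)^\bot$, the mean-curvature term $-\g(Pf_t,f_t).\Tr^g(S)$, and the two $K$-terms all carry over verbatim, with the factors of $\frac12$ and the overall sign dictated by the structural equation $\nabla_{\p_t}f_t=\frac12 H-K$. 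Thus the proof is just this substitution, and I would present it as a one-line appeal to the two preceding results followed by the observation about the doubled tangential term.
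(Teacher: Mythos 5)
Your proposal is correct and coincides with the paper's own proof: the paper's entire argument for this theorem is the remark that plugging the formulas for $H,K$ from the lemma of section~\ref{so:me} into the general equation $\nabla_{\p_t}f_t=\tfrac12 H_f(f_t,f_t)-K_f(f_t,f_t)$ of section~\ref{sh:ge} yields the statement, which is exactly the substitution $h=k=m=f_t$ that you carry out. One small correction to your justification of the factor $2$: the pointwise equality of the one-forms $\g(Pf_t,\nabla f_t)$ and $\g(\nabla f_t,Pf_t)$ has nothing to do with the $H^0$-self-adjointness of $P$ --- that is an integrated identity, $\int_M\g(Ph,k)\vol(g)=\int_M\g(h,Pk)\vol(g)$, and cannot yield a pointwise statement --- but is simply the symmetry of the fiberwise inner product $\g$ on $T_{f(x)}N$ applied to the two vectors $Pf_t(x)$ and $(\nabla_Xf_t)(x)$, so the step is valid for a more elementary reason than the one you give.
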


\subsection[Geodesic equation]{Geodesic equation on immersions in terms of the momentum}\label{so:gemo}

The geodesic equation in terms of the momentum has been calculated in section~\ref{sh:gemo}
for a general metric on immersions. For a Sobolev-type metric $G^P$, the momentum 
$G^P(f_t,\cdot)$ takes the form 
$$p=Pf_t\otimes\vol(g): \R \to T^*\Imm$$ 
since all other parts of the metric (namely the integral and $\g$) 
are constant and can be neglected. 

\begin{thm*}
The geodesic equation written in terms of the momentum for a Sobolev-type metric $G^P$
on $\Imm$ is given by:
\begin{equation*}
\left\{\begin{aligned}
p&=Pf_t\otimes\vol(g) 
\\
\nabla_{\p_t}p &= \frac12\big(\adj{\nabla P}(f_t,f_t)^\bot-2Tf.\g(Pf_t,\nabla f_t)^\sharp
-\g(Pf_t,f_t)\Tr^g(S)\big)\otimes\vol(g)
\end{aligned}\right.
\end{equation*}
\end{thm*}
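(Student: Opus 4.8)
The plan is to specialize the general momentum form of the geodesic equation, proven in section~\ref{sh:gemo}, to the metric $G^P$. That theorem states that for any metric $G$ the geodesic equation is equivalent to the pair $p=G(f_t,\cdot)$ and $\nabla_{\p_t}p=\frac12 G_f(H(f_t,f_t),\cdot)$, with $H$ the metric gradient of section~\ref{sh:me} and $\nabla$ the covariant derivative on maps into $T^*\Imm$ from section~\ref{sh:cov}. So everything reduces to evaluating the two sides for $G=G^P$, and no new analysis is required beyond the lemmas already established.

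First I would record the first line. Since $G^P_f(h,m)=\int_M\g(Ph,m)\vol(g)$ and both the pairing $\g$ and the integration are foot-point independent, the covector $G^P_f(f_t,\cdot)$ is represented by $p=Pf_t\otimes\vol(g)$; this is exactly the observation made at the opening of section~\ref{so:gemo}, and it yields the first equation verbatim.

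For the second line I would substitute the $H$-gradient from the lemma of section~\ref{so:me} into $\frac12 G^P_f(H(f_t,f_t),\cdot)$. The key structural feature is that $G^P_f(w,\cdot)$ is represented by $Pw\otimes\vol(g)$, so applying the metric to $H_f(f_t,f_t)=P_f\i(\dots)$ reintroduces a factor $P_f$ that cancels the $P_f\i$ in the gradient, using that $P_f$ is invertible on $\Ga(f^*TN)$ (it is elliptic, selfadjoint and positive, as recalled at the start of section~\ref{so:me}). After combining $\g(Pf_t,\nabla f_t)+\g(\nabla f_t,Pf_t)=2\g(Pf_t,\nabla f_t)$ by symmetry of $\g$, the bracket that remains is precisely the claimed right-hand side, tensored with $\vol(g)$.

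The computation carries no genuine obstacle, since the hard work sits in the derivation of $H$ in section~\ref{so:me} and of the general momentum equation in section~\ref{sh:gemo}. The only point worth flagging is the cancellation $P_f\circ P_f\i=\Id$ on smooth sections; conceptually this is the whole payoff, because it removes the pseudo-differential inverse $P\i$ from the right-hand side and leaves only differential operations on $f_t$, which is what makes this the most convenient form of the geodesic equation to analyze and to implement numerically.
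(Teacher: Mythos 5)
Your proposal is correct and is essentially the paper's own (implicit) argument: the paper likewise obtains this theorem by representing the momentum as $p=Pf_t\otimes\vol(g)$, invoking the general momentum form of the geodesic equation from section~\ref{sh:gemo}, and substituting the $H$-gradient of section~\ref{so:me}, whereupon $P_f\circ P_f\i=\Id$ cancels and the two terms $\g(Pf_t,\nabla f_t)$ and $\g(\nabla f_t,Pf_t)$ combine by symmetry of $\g$. No gap: the cancellation of $P\i$ and the invertibility of $P_f$ on smooth sections are justified exactly as you flag them.
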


\subsection{Well-posedness of the geodesic equation}\label{so:we}

It will be proven that the geodesic equation for a Sobolev-type metric $G^P$ on $\Imm$ is well-posed 
under some assumptions on $P$. 
These assumptions are satisfied for the operator $1+A\De^p$ considered in section~\ref{la}.
It will also be shown that $(\pi,\exp)$ is a diffeomorphism from a neighbourhood 
of the zero section in $T\Imm$ to a neighbourhood of the diagonal in $\Imm \x \Imm$. 

Before we can state the theorem, we have to introduce Sobolev completions of
the relevant spaces of mappings. 
More information can be found in \cite{Shubin1987}, \cite{EichhornFricke1998}, and in  
\cite{Eichhorn2007}. 
We consider Sobolev completions 
of $\Ga(E)$, where $E \to M$ is a vector bundle. 
First we choose a fixed (background) Riemannian metric $\hat g$ on $M$ and its
covariant derivative $\nabla^M$.
We equip $E$ with a (background) fiber Riemannian metric $\hat g^E$ 
and a compatible covariant derivative $\hat \nabla^E$.
Then the {\it Sobolev space} $H^k(E)$ is the Hilbert space completion 
of the space of smooth sections $\Ga(E)$ in the Sobolev norm
$$\|h\|_k^2 = \sum_{j=0}^k \int_M  (\hat g^E\otimes \hat g^0_j)\big((\hat \nabla^E)^j h, 
(\hat \nabla^E)^j h\big)\vol(\hat g).$$
This Sobolev space does not depend on the choices of $\hat g$, $\nabla^M$, $\hat g^E$ and 
$\hat \nabla^E$ since $M$ is compact: 
The resulting norms are equivalent. 

We shall need the following results (see \cite{Eichhorn2007}, e.g.):
\newtheorem*{SL}{Sobolev lemma}
\newtheorem*{MP}{Module property of Sobolev spaces}
\begin{SL}
If $k>\dim(M)/2$ then the identy on $\Ga(E)$ extends to a
injective bounded linear mapping $H^{k+p}(E)\to C^p(E)$ where $C^p(E)$ carries the 
supremum norm of all derivatives up to order $p$.
\end{SL}
\begin{MP}
If $k>\dim(M)/2$ then pointwise evaluation $H^k(L(E,E))\x H^k(E)\to H^k(E)$ is
bounded bilinear. Likewise all other pointwise contraction operations are multilinear bounded
operations.
\end{MP}

This allows us to define Sobolev completions of $\Imm$ and $T\Imm$.
In the canonical charts for $\Imm(M,N)$ centered at an immersion $f_0$, 
every immersion corresponds to a section of
the vector bundle $f_0^*TN$ over $M$ (see \cite[section~42]{MichorG}). 
The smooth Sobolev manifold $\Imm^k(M,N)$ (for $k>\dim(M)/2+1$) 
is constructed by gluing together the Sobolev completions
$H^k(f_0^*TN)$ of each canonical chart.
One has
$$\Imm^{k+1}(M,N) \subset \Imm^k(M,N),\qquad \bigcap_{k}\Imm^k(M,N) = \Imm(M,N).$$
Similarly, Sobolev completions of the space $T\Imm \subset C^\infty(M,TN)$ are defined
as $H^k$-mappings from $M$ into $TN$, i.e. $T\Imm^k=H^k(M,TN)$. 

\begin{ass}
$P,\nabla P$ and $\adj{\nabla P}^\bot$ are smooth sections of the bundles
\begin{equation*}\xymatrix{
L(T\Imm;T\Imm) \ar[d] & L^2(T\Imm;T\Imm) \ar[d] & L^2(T\Imm;T\Imm) \ar[d] \\
\Imm & \Imm & \Imm,
}\end{equation*}
respectively.
Viewed locally in trivializations of these bundles,
$$P_f h, \qquad (\nabla P)_f (h,k), \qquad \big(\adj{\nabla P}_f(h,k)\big)^\bot$$
are pseudo-differential operators of order $2p$ in $h,k$ separately. 
As mappings in the footpoint $f$ they are non-linear, and it is assumed that they are a 
composition of operators of the following type:
\newline
\textrm{(a)} Local operators of order $l\le 2p$, i.e., nonlinear differential operators
$$A(f)(x)=A(x,\hat \nabla^{l}f(x),\hat \nabla^{l-1}f(x),\dots,\hat \nabla f(x), f(x)),$$ 
\newline
\textrm{(b)}  Linear pseudo-differential operators of degrees $l_i$, 
\newline
such that the total (top) order of the composition is $\le 2p$.
\end{ass}

\begin{ass}
For each $f\in \Imm(M,N)$, the operator $P_f$ 
is an elliptic pseudo-differential operator of order $2p$ for $p>0$ 
which is positive and symmetric with respect to the $H^0$-metric on $\Imm$, i.e.
$$\int_M \g(P_f h,k)\vol(g) = \int_M \g(h,P_f k)\vol(g) \qquad \text{for } h,k \in T_f\Imm.$$
\end{ass}

\begin{ass} 
$P$ is invariant under the action of $\Diff(M)$. 
(See section~\ref{so:in} for the definition of invariance.)
\end{ass}

\begin{thm*}
Let $p\ge 1$ and $k>\dim(M)/2+1$, and let
$P$ satisfy assumptions 1--3.

Then the initial value problem for the geodesic equation 
\thetag{\ref{so:ge}}
has unique local solutions in the Sobolev manifold $\Imm^{k+2p}$ of
$H^{k+2p}$-immersions. The solutions depend smoothly on $t$ and on the initial
conditions $f(0,\;.\;)$ and $f_t(0,\;.\;)$.
The domain of existence (in $t$) is uniform in $k$ and thus this
also holds in $\Imm(M,N)$.

Moreover, in each Sobolev completion $\Imm^{k+2p}$, the Riemannian exponential mapping $\exp^{P}$ exists 
and is smooth on a neighborhood of the zero section in the tangent bundle, 
and $(\pi,\exp^{P})$ is a diffeomorphism from a (smaller) neigbourhood of the zero 
section to a neighborhood of the diagonal in 
$\Imm^{k+2p}\x \Imm^{k+2p}$.
All these neighborhoods are uniform in $k>\dim(M)/2+1$ and can be chosen 
$H^{k_0+2p}$-open, for $k_0 > \dim(M)/2+1$. Thus both properties of the exponential 
mapping continue to hold in $\Imm(M,N)$.
\end{thm*}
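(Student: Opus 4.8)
The plan is to recast the second-order geodesic equation of section~\ref{so:ge} as the flow of a smooth vector field, the \emph{geodesic spray}, on the tangent bundle of the Banach manifold $\Imm^{k+2p}$, and then to invoke the standard theory of ordinary differential equations on Banach manifolds. In a canonical chart centred at some $f_0$ the covariant term $\nabla_{\p_t}f_t$ becomes $f_{tt}+\Ga(f)(f_t,f_t)$, where $\Ga$ is the Christoffel symbol of $\g$ pulled back along $f$, so the equation takes the form $f_{tt}=\mathcal S(f,f_t)$ with $\mathcal S$ assembled from the right-hand side of section~\ref{so:ge}. At the outset I would record the invertibility of $P_f$: since $P_f$ is elliptic of order $2p$ and symmetric and positive for the $H^0$-metric (Assumption~2), elliptic regularity makes it a linear isomorphism $H^{k+2p}(f^*TN)\to H^{k}(f^*TN)$ with bounded inverse $P_f\i\colon H^{k}\to H^{k+2p}$; this is the argument alluded to at the beginning of section~\ref{so:me}. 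The two things to establish are then that $\mathcal S$ maps $T\Imm^{k+2p}$ into $H^{k+2p}$ (no loss of derivatives) and that it does so smoothly.

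The central point is the order count. With $f\in\Imm^{k+2p}$ and $f_t\in H^{k+2p}$, the Sobolev lemma and the module property make $H^{k}$ an algebra into which all the lower Sobolev spaces embed. Going through section~\ref{so:ge} term by term, I would check that every argument of $P_f\i$ lies in $H^{k}$: by Assumption~1 both $\adj{\nabla P}(f_t,f_t)^\bot$ and $(\nabla_{f_t}P)f_t$ are of order $2p$ in their arguments and hence land in $H^{k}$, while the remaining terms $Tf.\g(Pf_t,\nabla f_t)^\sharp$, $\g(Pf_t,f_t)\,\Tr^g(S)$ and $\Tr^g(\g(\nabla f_t,Tf))\,Pf_t$ are products of a factor in $H^{k}$ with lower-order factors, using $Pf_t\in H^{k}$, $\nabla f_t\in H^{k+2p-1}$ and $S\in H^{k+2p-2}$ together with $p\ge 1$. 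Applying $P_f\i$ restores the $2p$ derivatives, so each term returns to $H^{k+2p}$, and the Christoffel term $\Ga(f)(f_t,f_t)$ is algebraic in $f_t$ with coefficients smooth in $f$ and manifestly in $H^{k+2p}$. It is essential here that the dangerous top-order contribution $\grad^g\g(Pf_t,f_t)$ has already cancelled in section~\ref{so:me}; without that cancellation $\mathcal S$ would lose one derivative and no spray would exist.

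For smoothness I would verify that each constituent operation depends smoothly on the footpoint $(f,f_t)$. The algebraic operations, the compositions with the fixed smooth geometry of $N$, and the covariant derivatives are smooth by the module property; the dependence $f\mapsto P_f$ is smooth by Assumption~1, being a composition of nonlinear local operators and fixed linear pseudo-differential operators; and $f\mapsto P_f\i$ is then smooth because operator inversion is smooth on the open set of invertible operators. Hence $\mathcal S$ is smooth in charts, the geodesic spray is a smooth vector field on $T\Imm^{k+2p}$, and the Picard--Lindel\"of theorem on Banach manifolds yields unique local solutions depending smoothly on $t$ and on the initial conditions $f(0,\cdot)$ and $f_t(0,\cdot)$, which proves the assertions on $\Imm^{k+2p}$.

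It remains to get uniformity in $k$ and to descend to $\Imm(M,N)$. The sprays on the different Sobolev levels are restrictions of one another, so I would run a no-loss-of-derivatives bootstrap: if the initial data are $H^{k+2p+1}$ then the $H^{k+2p}$-solution is in fact $H^{k+2p+1}$ on the \emph{same} time interval, established by a Gr\"onwall estimate on the top-order norm so that raising the regularity does not shrink the existence interval. Iterating, smooth initial data give smooth solutions on an interval independent of $k$, and intersecting the completions gives solutions in $\Imm(M,N)$. Smoothness of the spray then gives existence and smoothness of $\exp^{P}$ on a neighbourhood of the zero section, and since $T_0(\exp^{P}_f)=\Id$ the inverse function theorem on Banach manifolds makes $(\pi,\exp^{P})$ a diffeomorphism from a smaller neighbourhood of the zero section onto a neighbourhood of the diagonal; the $k$-uniformity just obtained transfers both statements to $\Imm(M,N)$. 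I expect the main obstacle to be the interplay of the order count with the smooth dependence of $P_f\i$ on $f$, and then the uniformity of the existence interval in $k$, which is the delicate bootstrapping part of the argument.
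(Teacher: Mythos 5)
Your proposal is correct and follows essentially the same three-step route as the paper: smoothness of the geodesic spray on each $T\Imm^{k+2p}$ via the order count and the invertibility of $P_f$, a no-loss-of-derivatives argument making the existence interval uniform in $k$, and the inverse function theorem for $(\pi,\exp^P)$. The only real difference is cosmetic, in the middle step: where you invoke a Gr\"onwall estimate on the top-order norm, the paper makes this precise by differentiating the geodesic equation and observing (via Assumption~1, using the chain rule for local operators and the fact that commutators with pseudo-differential operators do not raise the order) that the highest derivatives of $f$ and $f_t$ enter only \emph{linearly}, so they solve a linear ODE whose coefficients are supplied by the lower-regularity solution and therefore persist on its entire existence interval.
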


This proof is partly an adaptation of \cite[section 4.3]{Michor107}. 
It works in three steps: First, the geodesic equation 
is formulated as the flow equation of a smooth vector field on each Sobolev completion
$T\Imm^{k+2p}$. 
Thus one gets local existence and uniqueness of solutions. Second, it is shown that 
the time-interval where a solution exists does not depend on the order of the Sobolev space
of immersions. Thus one gets solutions on the intersection of all Sobolev spaces, which 
is the space of smooth immersions. Third, a general argument involving the inverse function theorem 
on Banach spaces proves the claims 
about the exponential map.

\begin{proof} By assumption~1 the mapping
$P_f h$ is of order $2p$ in $f$ and in $h$ where $f$ is the footpoint of $h$.
Therefore
$f \mapsto P_f$ extends to a smooth section of the smooth Sobolev bundle
\begin{equation*}
L\big(T\Imm^{k+2p};T\Imm^k\mid \Imm^{k+2p} \big) \to \Imm^{k+2p}, 
\end{equation*}
where $T\Imm^k\mid \Imm^{k+2p}$ denotes the space of all $H^k$ tangent vectors 
with foot point a $H^{k+2p}$ immersion, i.e., the restriction of the bundle $T\Imm^k\to \Imm^{k}$ 
to $\Imm^{k+2p}\subset\Imm^k$.

This means that $P_f$ is a bounded linear operator
$$P_f \in L\big(H^{k+2p}(f^*TN),H^k(f^*TN)\big)\quad\text{for}\quad f\in \Imm^{k+2p}.$$ 
It is injective since it is positive. 
As an elliptic operator, 
it is an unbounded operator on the Hilbert completion of $T_f\Imm$ with respect to the 
$H^0$-metric, and a Fredholm operator $H^{k+2p}\to H^k$ for each $k$. 
It is selfadjoint elliptic, thus by \cite[theorem 26.2]{Shubin1987} it has vanishing index.
Since it is injective, it is thus also surjective.

By the implicit function theorem on Banach spaces, 
$f\mapsto P_f\i$ is then a smooth section of the smooth Sobolev bundle 
\begin{equation*}
L\big(T\Imm^k \mid \Imm^{k+2p} ; T\Imm^{k+2p}\big) \to \Imm^{k+2p}
\end{equation*}
As an inverse of an elliptic pseudodifferential operator, 
$P_f\i$ is also an elliptic pseudo-differential operator of order $-2p$. 

By assumption~1 again, $(\nabla P)_f(m,h)$ and $\big(\adj{\nabla P}_f(m,h)\big)^\bot$
are of order $2p$ in $f,m,h$ (locally). 
Therefore $f\mapsto P_f$ and $f\mapsto \adj{\nabla P}^\bot$ 
extend to smooth sections of the smooth Sobolev bundle 
\begin{equation*}
L^2\big(T\Imm^{k+2p}; T\Imm^k \mid \Imm^{k+2p}\big) \to \Imm^{k+2p}
\end{equation*}

Using the module property of Sobolev spaces
and counting the order of all remaining terms in the geodesic equation 
\ref{so:ge}, one obtains that the Christoffel symbols
\begin{align*}
\frac12 H_f(h,h)-K_f(h,h) &=\frac12P\i\Big(\adj{\nabla P}(h,h)^\bot-2.Tf.\g(Ph,\nabla h)^\sharp
\\&\qquad
-\g(Ph,h).\Tr^g(S)-(\nabla_{h}P)h-\Tr^g\big(\g(\nabla h,Tf)\big) Ph\Big)
\end{align*}
extend to a smooth $(C^\infty)$ section 
of the smooth Sobolev bundle
\begin{equation*}
L^2_{\text{sym}}\big(T\Imm^{k+2p}; T\Imm^{k+2p}\big) \to \Imm^{k+2p}
\end{equation*}
Thus $h\mapsto \tfrac12 H(h,h)-K(h,h)$ is a smooth quadratic mapping $T\Imm \to T\Imm$ which 
extends to smooth quadratic mappings $T\Imm^{k+2p} \to T\Imm^{k+2p}$ for each 
$k\ge \frac{\dim(2)}2+1$.
The geodesic equation 
$$\nabla^{\g}_{\p_t}f_t = \frac12 H_f(f_t,f_t)-K_f(f_t,f_t)$$
can be reformulated using the linear connection  $C^g:TN \x_N TN \to TTN$ (horizontal lift mapping) 
of $\nabla^{\g}$,  
see \cite[section~24.2]{MichorH}:
\begin{align*}
\p_t f_t &=C\Big( \frac12 H_f(f_t,f_t)-K_f(f_t,f_t),f_t\Big).
\end{align*}
The right-hand side is a smooth vector field on $T\Imm^{k+2p}$,
the geodesic spray. Note that the restriction to $T\Imm^{k+1+2p}$ of the geodesic spray on 
$T\Imm^{k+2p}$ equals the geodesic spray there.
By the theory of smooth ODE's on Banach spaces, the flow of 
this vector field exists in $T\Imm^{k+2p}$ and is smooth in time and in the initial condition. 

Consider a $C^\infty$ initial condition $h_0 \in T\Imm$ with foot point $f_0\in \Imm$. 
Suppose the 
trajectory $\on{Fl}^k_t(h_0)$ of
geodesic spray through these initial conditions in $T\Imm^{k+2p}$ maximally exists for $t\in (-a_k,b_k)$, 
and the trajectory $\on{Fl}^{k+1}_t(h_0)$ in $T\Imm^{k+1+2p}$ 
maximally exists for $t\in(-a_{k+1},b_{k+1})$ with 
$b_{k+1}<b_k$, say. By
uniqueness of solutions one has $\on{Fl}^{k+1}_t(h_0)=\on{Fl}^{k}_t(h_0)$ for
$t\in (-a_{k+1,}b_{k+1})$.
We now write $\hat\nabla$ for the covariant derivative induced by $\g$ on $N$ and the background 
metric $\hat g$ on $M$.
Let $X$ be a vector field on $M$. 
Applying $\nabla^{\g}_X=:\hat\nabla_X$ to the geodesic equation 
and swapping covariant derivatives yields: 
\begin{align*}
\nabla^{\g}_{\p_t} \nabla^{\g}_{\p_t} Tf.X 
&= 
\nabla^{\g}_{\p_t} \nabla^{\g}_X f_t  
=
\nabla^{\g}_{X} \nabla^{\g}_{\p_t} f_t +R^{\g}(f_t,Tf.X)f_t
\\&=
\nabla^{\g}_{X} \big(\tfrac12 H_f(f_t,f_t)-K_f(f_t,f_t)\big)+R^{\g}(f_t,Tf.X)f_t 
\tag{A}
\end{align*}
Note that $i_X\hat\nabla_{\p_t}\nabla^{\g} f_t = \nabla^{\g}_{\p_t}\nabla^{\g}_X f_t - 
\nabla^{\g}_{\nabla^{\g}_{\p_t}X} f_t = \nabla^{\g}_{\p_t}\nabla^{\g}_X f_t-0$. Thus we can omit $X$ and 
rewrite \thetag{A} as an equation for $Tf$.
We aim to rewrite equation \thetag{A} as a linear first order equation for the highest derivative 
whose coefficients are given by $\on{Fl}^k_t(h_0)$ and thus exist beyond $(-a_{k+1},b_{k+1})$. For this we 
have to pass to one (ore more) canonical chart for $\Imm$ and the induced trivializations of all bundles as 
before and in assumption \thetag{1}.
Then $f$ itself has values in a vector space and we may regard \thetag{A} as a vector valued  1-form on $M$.
So we rewrite \thetag{A} as:
\begin{align*}
\hat\nabla_{\p_t} Tf &= \nabla^{\g} f_t
\\
\hat\nabla_{\p_t} \nabla^{\g} f_t  
&=
\nabla^{\g} \big(\tfrac12 H_f(f_t,f_t)-K_f(f_t,f_t)\big) + R^{\g}(f_t,Tf)f_t 
\\&=
Y^1(f,f_t)(\hat\nabla^{2p}Tf) + Y^2(f,f_t)(\hat\nabla^{2p+1}f_t) +	 Y^3(f,f_t).
\tag{B}\end{align*}
We claim that
\thetag{B} consists of: 
\newline $\bullet$
The smooth expression $Y^1(f,f_t)(\hat\nabla^{2p+1}f)$ which is \emph{linear} and of order 0 in 
$\hat\nabla^{2p+1}f$ and where $Y^1(f,f_t)$ is of order $\le 2p$ in $f,f_t$; order here means that the 
expression prolongs continuously to the corresponding Sobolev spaces. 
\newline $\bullet$
The smooth expression $Y^2_X(f,f_t)(\hat\nabla^{2p+1} f_t)$ which is \emph{linear} and of order 0 in 
$\hat\nabla^{2p+1}f_t$ and where $Y^2(f,f_t)$ is of order $\le 2p$ in $f,f_t$.
\newline $\bullet$
The smooth expression $Y^3_X(f,f_t)$ of order $\le 2p$ in $f,f_t$. 
\newline
To see this 
we claim that the highest derivatives of order $2p+1$ of $f$ and $f_t$ appear only linearly
in \thetag{A}. 
This claim follows from assumption~1: 
\newline
(a) For a local operator we can apply the chain rule: 
The highest derivative of $f$ appears only linearly.
\newline
(b) For a linear pseudo differential operator $A$ of order $k$ 
the commutator $[\hat \nabla,A]$ is a pseudo-differential operator of order $k$ again. 
\newline
On the left hand side of \thetag{B} we write $Tf=:u$ and $\hat\nabla f_t =: v$.
On the right hand side of \thetag{B} we write 
$\hat \nabla^{2p} Tf = \hat \nabla^{2p}u$ and 
$\hat \nabla^{2p+1}f_t = \hat \nabla^{2p} v$ for the highest derivatives only.
Then the system \thetag{B} becomes:
\begin{align*}
\hat\nabla_{\p_t} u &= v
\\
\hat\nabla_{\p_t} v
&=
Y^1(f,f_t)(\hat\nabla^{2p}u) + Y^2(f,f_t)(\hat\nabla^{2p}v) +	 Y^3(f,f_t).
\tag{C}\end{align*}
The coefficients $f,f_t$ in \thetag{C} exist for $t\in (-a_k,b_k)$ as $\on{Fl}^k_t(h_0)$.
Then \thetag{C} is a bounded and smooth inhomogeneous linear ODE for 
$(u,v)\in \Om^1(M,T\Imm^{k+2p})$, i.e., in a Banach space.
This equation 
therefore has a solution $(u(t),v(t))$ for all $t$ for which the
coefficients exists, thus for all $t\in (a_k,b_k)$, which is unique for the initial values $u_0 = T f_0$ and 
$v_0 = \hat\nabla h_0$. The limit 
$\lim_{t\nearrow b_{k+1}} (u(t),v(t))$ exists in $\Om^1(M,T\Imm^{k+2p})$ 
and by continuity it equals $(\hat \nabla Tf,\hat \nabla f_t)$
for $t=b_{k+1}$. Thus the flow line $\on{Fl}^k_t(h_0)$ was not
maximal and can be continued. So assuming $b_{k+1}<b_k$ leads to a contradiction, and thus 
$(-a_{k+1},b_{k+1})=(-a_k,b_k)$.  
Iterating this procedure one concludes that the flow line $\on{Fl}^{\infty}_t(h_0)$ exists in
$\bigcap_{k\ge \frac{\dim(M)}2+1} T\Imm^{k+2p}= T\Imm$. 

It remains to check the properties of the Riemannian exponential mapping $\exp^P$.
It is given by $\exp^P_{f}(h)= c(1)$ where $c(t)$ is the geodesic emanating from  
value $f$ with initial velocity $h$.
Let $k_0>\dim(M)/2+1$ and $k\ge k_0$.
On each space $T\Imm^{k+2p}$, the properties claimed follow
from local existence and uniqueness of solutions to 
the flow equation of the geodesic spray, from
the form of the geodesic equation $f_{tt}=\tfrac12 H(f_t,f_t)-K(f_t,f_t)$
when it is written down in a chart,
namely linearity in $f_{tt}$ and bilinearity in $f_t$, 
and from the inverse function theorem 
which holds on each of the Sobolev spaces $\Imm^{k+2p}$.
See for example \cite[22.6 and 22.7,]{MichorH} for a detailed proof 
which works without any change in notation.

$\Imm^{k_0+2p}$ contains $\Imm^{k+2p}$ for $k>k_0$.
Since the spray on $\Imm^{k_0+2p}$ restricts to the spray on each $\Imm^{k+2p}$,
the exponential mapping $\exp^P$ and the inverse $(\pi,\exp^P)\i$ on $\Imm^{k_0+2p}$ restrict 
to the corresponding mappings on each $\Imm^{k+2p}$. Thus the neighborhoods of existence are 
uniform in $k$ and can be chosen $H^{k_0+2p}$-open.
\end{proof}

\subsection{Momentum mappings}\label{so:mo}

Recall that by assumption, the operator $P$ is invariant under the action of the 
reparametrization group $\on{Diff}(M)$. Therefore
the induced metric $G^P$ is invariant under this group action, too.  
According to \cite[section~2.5]{Michor107} one gets:

\begin{thm*}
The reparametrization momentum, which is the momentum mapping 
corresponding to the action of $\Diff(M)$ on $\Imm(M,N)$, 
is conserved along any geodesic $f$ in $\Imm(M,N)$: 
\begin{align*}
&\forall X\in\X(M): \int_M  \g( Tf.X,Pf_t ) \vol(g) \\
\intertext{or equivalently}
&g\bigl((Pf_t )^\top\bigr) \vol(g) \in\Ga(T^*M\otimes_M\vol(M))
\end{align*}
is constant along $f$.
\end{thm*}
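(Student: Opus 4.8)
The plan is to recognize this as the Noether conservation law for the reparametrization symmetry and to verify it directly. For a fixed vector field $X\in\X(M)$ the infinitesimal reparametrization it generates is the fundamental vector field of the right action $r$, namely $\zeta_X(f)=\p_t|_0(f\o Fl^X_t)=Tf.X\in T_f\Imm$. Thus the quantity to be shown constant is exactly the momentum $G^P_f(f_t,\zeta_X)=\int_M\g(Pf_t,Tf.X)\vol(g)$ paired against $X$. First I would dispatch the equivalence of the two displayed forms: since $\g(Tf.X,Pf_t)=\g(Tf.X,Tf.(Pf_t)^\top)=g\big(X,(Pf_t)^\top\big)$, the integral equals the pairing of $g\big((Pf_t)^\top\big)\vol(g)\in\Ga(T^*M\otimes_M\vol(M))$ with $X$, and such a section is determined by its pairings against all $X\in\X(M)$. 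So it suffices to prove that $t\mapsto G^P_f(f_t,Tf.X)$ is constant for each fixed, time-independent $X$.

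Next I would differentiate along the geodesic using the product rule for the covariant derivative $\nabla$ on $\Imm$ from section~\ref{sh:cov} acting on $G^P$:
\begin{equation*}
\p_t\,G^P_f(f_t,Tf.X)=(\nabla_{f_t}G^P)(f_t,Tf.X)+G^P_f(\nabla_{\p_t}f_t,Tf.X)+G^P_f(f_t,\nabla_{\p_t}(Tf.X)).
\end{equation*}
Into the three terms I would substitute: the geodesic equation $\nabla_{\p_t}f_t=\tfrac12 H_f(f_t,f_t)-K_f(f_t,f_t)$ from section~\ref{so:ge}; the defining identity $(\nabla_{f_t}G^P)(f_t,Tf.X)=G^P_f\big(K_f(f_t,f_t),Tf.X\big)$ of the $K$-gradient; and the torsion-free swapping formula~\eqref{no:sw:to2}, which for time-independent $X$ gives $\nabla_{\p_t}(Tf.X)=\nabla_X f_t$. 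The two $K$-terms then cancel, and using the symmetry of $G^P$ together with the defining identity $G^P_f\big(Tf.X,H_f(f_t,f_t)\big)=(\nabla_{Tf.X}G^P)(f_t,f_t)$ of the $H$-gradient, the derivative reduces to
\begin{equation*}
\p_t\,G^P_f(f_t,Tf.X)=\tfrac12(\nabla_{Tf.X}G^P)(f_t,f_t)+G^P_f(f_t,\nabla_X f_t).
\end{equation*}

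Finally I would feed in the $\Diff(M)$-invariance of $P$, hence of $G^P$. Infinitesimally this reads $\L_{\zeta_X}G^P=0$, and since $\nabla$ on $\Imm$ is torsion-free the Lie derivative of the symmetric two-tensor $G^P$ expands as $(\L_{\zeta_X}G^P)(a,b)=(\nabla_{\zeta_X}G^P)(a,b)+G^P(\nabla_a\zeta_X,b)+G^P(a,\nabla_b\zeta_X)$; evaluating at $a=b=f_t$ yields $(\nabla_{Tf.X}G^P)(f_t,f_t)=-2\,G^P_f(\nabla_{f_t}\zeta_X,f_t)$. The identity that makes everything close is $\nabla_{f_t}\zeta_X=\nabla_{\p_t}(Tf.X)=\nabla_X f_t$, again from~\eqref{no:sw:to2}; substituting it gives $(\nabla_{Tf.X}G^P)(f_t,f_t)=-2\,G^P_f(f_t,\nabla_X f_t)$, so the two surviving terms cancel and $\p_t\,G^P_f(f_t,Tf.X)=0$. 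I expect the main obstacle to be precisely this last step: pinning down the infinitesimal form of the $\Diff(M)$-invariance and the compatibility identity $\nabla_{f_t}\zeta_X=\nabla_X f_t$ between the connection on $\Imm$ and the fundamental vector field, since this is exactly where the reparametrization symmetry is turned into the cancellation (the Noether mechanism). A more structural alternative would be to invoke the general fact that momentum maps of isometric group actions are conserved along geodesics, but the direct verification above uses only the gradient identities and the geodesic equation already at hand.
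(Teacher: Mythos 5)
Your proof is correct, but it takes a genuinely different route from the paper. The paper does not verify conservation by hand at all: the theorem in section~\ref{so:mo} is justified by the single phrase ``According to \cite[section~2.5]{Michor107} one gets'', i.e.\ by invoking the general Hamiltonian/Noether principle that the momentum mapping of a group acting by isometries is constant along geodesics, applied to the $\Diff(M)$-action under which $G^P$ is invariant. Your argument unpacks that Noether mechanism explicitly within the paper's own framework: you differentiate $G^P_f(f_t,Tf.X)$ along the geodesic with the covariant derivative of section~\ref{sh:cov}, cancel the $K$-terms against the geodesic equation of section~\ref{sh:ge}, convert the surviving $H$-term into $(\nabla_{Tf.X}G^P)(f_t,f_t)$ via the defining identity of the gradients in section~\ref{sh:me}, and kill the remainder using the infinitesimal form $\L_{\zeta_X}G^P=0$ of the $\Diff(M)$-invariance together with $\nabla_{f_t}\zeta_X=\nabla_X f_t$, which is exactly formula \eqref{no:sw:to2}; every identity you invoke is available in the paper, and the torsion-freeness needed for your Lie-derivative expansion is established in section~\ref{sh:cov}. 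Two remarks on the trade-off. First, your computation uses nothing specific to Sobolev-type metrics: it holds for any $\Diff(M)$-invariant metric admitting gradients $H,K$, so it actually proves the statement at the level of generality of section~\ref{sh}, whereas the operator $P$ enters only through the identification of the momentum with $g\bigl((Pf_t)^\top\bigr)\vol(g)$ (your reduction of the two displayed forms to each other is also correct). Second, what the paper's citation buys is brevity and uniformity: the same principle simultaneously yields the linear and angular momenta of section~\ref{su} in flat ambient space, while your route buys a self-contained verification that doubles as a consistency check of the definitions of $H$, $K$, and $\nabla$. If you wanted to streamline your last step, you could bypass the Lie-derivative formalism by differentiating the invariance identity $G_{f\o Fl^X_s}(h\o Fl^X_s,k\o Fl^X_s)=G_f(h,k)$ at $s=0$, which directly gives $(\nabla_{Tf.X}G^P)(f_t,f_t)=-2\,G^P_f(\nabla_X f_t,f_t)$ by property \eqref{no:co:prop5} of section~\ref{no:co}, but this is a cosmetic difference, not a gap.
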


\subsection{Horizontal bundle}\label{so:ho}

The splitting of $T\Imm$ into horizontal and vertical subspaces will be calculated
for Sobolev-type metrics $G^P$. See section~\ref{sh:rish} for the definitions. 
By definition, a tangent vector $h$ to $f \in \Imm(M,N)$ is horizontal if
and only if it is $G^P$-perpendicular to
the $\on{Diff}(M)$-orbits. This is the case if and only if 
$\g( P_f h(x), T_x f .X_x) = 0$ at every point $x \in M$. 
Therefore the horizontal bundle at the point $f$ equals 
\begin{align*}
&\big\{h\in T_f\Imm: P_fh(x)\,\bot\, T_x f(T_xM)\text{ for all 
}x\in M\}
=\big\{h : (P_fh)^\top = 0\big\}. 
\end{align*}
Note that the horizontal bundle consists of vector fields that are normal to $f$ 
when $P=\Id$, i.e. for the $H^0$-metric on $\Imm$.

Let us work out the $G^P$-decomposition of $h$ into vertical and 
horizontal parts. This decomposition is written as
\begin{equation*}
h= Tf.h^{\text{ver}} + h^{\text{hor}}.
\end{equation*}
Then 
\begin{align*}
P_fh = P_f (Tf.h^{\text{ver}}) + P_f h^{\text{hor}} \quad \text{with} \quad
(P_fh)^\top = (P_f (Tf.h^{\text{ver}}))^\top + 0. 
\end{align*}
Thus one considers the operators
\begin{align*}
&P_f^\top :\X(M) \to \X(M), &\qquad  P_f^\top (X) &= \big(P_f(Tf.X)\big)^\top,
\\
&P_{f,\bot}:\X(M)\to \Ga\big(\Nor(f)\big) \subset C^\infty(M,TN), &  P_{f,\bot}(X) &=  
\big(P_f(Tf.X)\big)^\bot.
\end{align*}
The operator $P_f^\top $ is unbounded, positive and symmetric on the Hilbert completion
of $T_f\Imm$ with respect to the $H^0$-metric since one has 
\begin{align*}
\int_M g(P_f^\top X,Y)\vol(g) &= 
\int_M \g(Tf.P_f^\top X,Tf.Y)\vol(g) 
\\&
= \int_M \g(P_{f}(Tf.X),Tf.Y)\vol(g) 
\\&
= \int_M g(P_f^\top Y,X)\vol(g), 
\\
\int_M g(P_f^\top X,X)\vol(g) &= 
\int_M \g(P_{f}(Tf.X),Tf.X)\vol(g) \; > 0 \quad\text{  if }X\ne 0.
\end{align*}
Let $\si^{P_f}$ and $\si^{P_f^\top}$ denote the principal symbols of $P_f$ and $P_f^\top$, 
respectively. Take any $x \in M$ and $\xi\in T^*_xM\setminus\{0\}$.
Then $\si^{P_f}(\xi)$ is symmetric, positive definite on $(T_{f(x)}N,\g)$. 
This means that one has for any $h,k \in T_{f(x)}N$ that
\begin{equation*}
\g\big(\si^{P_f}(\xi)h,k\big) = \g\big(h,\si^{P_f}(\xi)k\big), \qquad
\g\big(\si^{P_f}(\xi)h,h\big) > 0 \text{ for } h \neq 0.
\end{equation*}
The principal symbols $\si^{P_f}$ and $\si^{P_f^\top}$ are related by
\begin{equation*}
g\big(\si^{P_f^\top}(\xi)X,Y\big) = \g\big(Tf.\si^{P_f^\top}(\xi)X,Tf.Y\big)
= \g\big(\si^{P_f}(\xi)Tf.X,Tf.Y\big),
\end{equation*}
where $X,Y \in T_xM$.
Thus $\si^{P_f^\top}(\xi)$ is symmetric, positive definite on $(T_xM,g)$.
Therefore $P_f^\top $ is again elliptic, thus it is selfadjoint, 
so its index (as operator $H^{k+2p}\to H^{k}$) vanishes. It 
is injective (since positive) with vanishing index (since self-adjoint elliptic, by \cite[theorem 26.2]{Shubin1987}) 
hence it is bijective and thus invertible by the open mapping theorem. 
Thus it has been proven: 

\begin{lem*}
The decomposition of $h \in T\Imm$ into its vertical and horizontal components 
is given by
\begin{align*}
h^{\text{ver}} &= (P_f^\top )\i\big((P_fh)^\top\big),
\\
h^{\text{hor}} &= h - Tf.h^{\text{ver}} =  h - Tf.(P_f^\top )\i\big((P_fh)^\top\big).
\end{align*}
\end{lem*}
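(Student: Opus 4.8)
The plan is to verify directly that the stated formulas produce the $G^P$-orthogonal decomposition, relying on the invertibility of $P_f^\top$ that has just been established. First I would recall the two ingredients already in hand: the vertical space $V_f$ at an immersion $f$ is the tangent space to the $\Diff(M)$-orbit through $f$, hence consists exactly of the tangential fields $Tf.X$ with $X\in\X(M)$; and the horizontal space was computed above to be $\Hor_f=\{h\in T_f\Imm:(P_fh)^\top=0\}$. Thus what we seek is a splitting $h=Tf.h^{\text{ver}}+h^{\text{hor}}$ with $h^{\text{ver}}\in\X(M)$ and $h^{\text{hor}}\in\Hor_f$.

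Next I would apply $P_f$ to this ansatz and take the tangential part. Since $h^{\text{hor}}$ is horizontal, $(P_fh^{\text{hor}})^\top=0$, while by the definition of $P_f^\top$ one has $(P_f(Tf.h^{\text{ver}}))^\top=P_f^\top(h^{\text{ver}})$. Hence
$$(P_fh)^\top=P_f^\top(h^{\text{ver}}),$$
which determines $h^{\text{ver}}$ uniquely once $P_f^\top$ is known to be invertible. This is the only nontrivial point, and it is precisely what the computation preceding the lemma supplies: $P_f^\top$ is elliptic, because its principal symbol $\si^{P_f^\top}(\xi)$ is symmetric and positive definite on $(T_xM,g)$, and it is symmetric and positive with respect to the $H^0$-metric, hence self-adjoint with vanishing index and injective, therefore bijective. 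Inverting gives $h^{\text{ver}}=(P_f^\top)\i\big((P_fh)^\top\big)$, and setting $h^{\text{hor}}=h-Tf.h^{\text{ver}}$ yields the asserted formulas.

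Finally I would check that these formulas genuinely define the decomposition and that it is unique. Horizontality of $h^{\text{hor}}$ holds by construction, since $(P_fh^{\text{hor}})^\top=(P_fh)^\top-P_f^\top(h^{\text{ver}})=0$. Uniqueness follows from $V_f\cap\Hor_f=0$: a tangential field $Tf.X$ that is simultaneously horizontal satisfies $P_f^\top X=0$, whence $X=0$ by injectivity of $P_f^\top$. This also confirms that for Sobolev-type metrics the horizontal bundle is a true complement to the vertical bundle, as was anticipated in section~\ref{sh:rish}. The main, and essentially only, obstacle is the invertibility of $P_f^\top$, which is why the ellipticity and positivity argument was carried out in advance; the remainder is formal manipulation.
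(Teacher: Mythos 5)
Your proposal is correct and follows essentially the same route as the paper: the paper also characterizes the horizontal bundle as $\{h:(P_fh)^\top=0\}$, applies $P_f$ to the ansatz $h=Tf.h^{\mathrm{ver}}+h^{\mathrm{hor}}$ to obtain $(P_fh)^\top=P_f^\top(h^{\mathrm{ver}})$, and then invokes the invertibility of $P_f^\top$ (ellipticity via the principal symbol, positivity, symmetry, vanishing index) to solve for $h^{\mathrm{ver}}$. Your explicit verification of uniqueness via $V_f\cap\Hor_f=0$ is a small addition the paper leaves implicit, but it does not change the argument.
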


\subsection{Horizontal curves}\label{so:ho2}

To establish the one-to-one correspondence between horizontal 
curves in $\Imm$ and curves in shape space that has been described in theorem~\ref{sh:sub}, 
one needs the following property: 

\begin{lem*}
For any smooth path $f$ in $\Imm(M,N)$ there exists a
smooth path $\ph$ in $\on{Diff}(M)$ with $\ph(0,\;.\;)=\on{Id}_M$ 
depending smoothly on $f$ such that
the path $\tilde f$ given by $\tilde f(t,x)=f(t,\ph(t,x))$ is horizontal:
$$\g \big( P_{\tilde f}(\p_t\tilde f),T\tilde f.TM \big) =0.$$
Thus any path in shape space can be lifted to a horizontal path of immersions.
\end{lem*}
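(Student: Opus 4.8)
The plan is to look for $\tilde f$ in the ansatz form $\tilde f(t,x)=f(t,\ph(t,x))$ and to convert the horizontality requirement into a flow equation for $\ph$. Writing $\xi(t)=\p_t\ph(t,\cdot)\circ\ph(t,\cdot)\i\in\X(M)$ for the time-dependent vector field generating $\ph$, so that $\ph$ is the flow of $\xi$ with $\ph(0,\cdot)=\Id_M$ and $\p_t\ph=\xi\circ\ph$, a direct differentiation of $\tilde f$ gives
$$\p_t\tilde f=\big(f_t+Tf.\xi\big)\circ\ph.$$
I would then invoke the reparametrization invariance of $P$ from section~\ref{so:in}, in the form $P_f(h)\circ\ph=P_{f\circ\ph}(h\circ\ph)$, to obtain
$$P_{\tilde f}(\p_t\tilde f)=\big(P_f(f_t+Tf.\xi)\big)\circ\ph.$$

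Next I would show that the horizontality condition is equivalent to a pointwise condition that no longer involves $\ph$. Since $T\tilde f.T_xM=Tf.T_{\ph(t,x)}M$ and $T\ph$ is a fibrewise isomorphism, the requirement $\g\big(P_{\tilde f}(\p_t\tilde f),T\tilde f.TM\big)=0$ is equivalent to $\big(P_f(f_t+Tf.\xi)\big)^\top=0$ on all of $M$. Splitting $P_f(f_t+Tf.\xi)=P_ff_t+P_f(Tf.\xi)$ and using the operator $P_f^\top$ introduced in section~\ref{so:ho}, for which $P_f^\top\xi=(P_f(Tf.\xi))^\top$, this becomes the linear equation
$$P_f^\top\xi=-(P_ff_t)^\top.$$

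Since $P_f^\top$ was shown in section~\ref{so:ho} to be elliptic, positive, self-adjoint and hence invertible on $\X(M)$, I can solve explicitly
$$\xi(t)=-(P_f^\top)\i\big((P_ff_t)^\top\big).$$
The decisive observation is that the right-hand side is assembled entirely from the given path $f$ and contains $\ph$ nowhere, so $\xi$ is a genuinely prescribed smooth time-dependent vector field on $M$ that depends smoothly on $f$. I would then \emph{define} $\ph$ to be the flow of $\xi$ with $\ph(0,\cdot)=\Id_M$; compactness of $M$ guarantees that this flow is complete over the whole time interval on which $f$ is defined and takes values in $\Diff(M)$, and smooth dependence of flows on parameters yields smooth dependence of $\ph$, and hence of $\tilde f$, on $f$.

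The main obstacle I anticipate is not the algebra but the functional-analytic bookkeeping in the last two steps: one must verify that $(P_f^\top)\i$ applied to $(P_ff_t)^\top$ really produces a smooth vector field depending smoothly on the footpoint $f$, which rests on the ellipticity and invertibility established in section~\ref{so:ho} together with smooth dependence of the inverse of an elliptic pseudo-differential operator on its footpoint, and that the resulting time-dependent vector field integrates to a smooth curve in $\Diff(M)$. Both are standard once the identity $P_f^\top\xi=-(P_ff_t)^\top$ is in hand, so the conceptual heart of the argument is the reduction carried out in the second step.
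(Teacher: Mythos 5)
Your proposal is correct and follows essentially the same route as the paper's proof: the Moser trick of seeking $\ph$ as the flow of a time-dependent vector field $\xi$, using the reparametrization invariance $P_{f\o\ph}(u\o\ph)=(P_fu)\o\ph$ to strip off $\ph$, and solving the resulting tangential equation by $\xi=-(P_f^\top)\i\big((P_ff_t)^\top\big)$ with invertibility of $P_f^\top$ supplied by section~\ref{so:ho}. The only difference is presentational: you state the reduction and the flow-integration step slightly more explicitly than the paper does.
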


The basic idea is to write the path $\ph$ as the integral curve of a time dependent vector field. 
This method is called the Moser-trick (see \cite[Section 2.5]{Michor102}).

\begin{demo}{Proof}
Since $P$ is invariant, one has $(r^\ph)^* P = P$ or 
$P_{f\o \ph}(u\o \ph)=(P_fu)\o\ph$ for $\ph\in\on{Diff}(M)$. 
In the following $f\circ\varphi$ will denote the map $f(t, \varphi(t,x))$, etc.
One looks for $\ph$ as the integral curve of a time dependent vector field $\xi(t,x)$ on
$M$, given by $\ph_t=\xi\o \ph$.
The following expression must vanish for all $x \in M$ and $X_x \in T_x M$:
\begin{align*}
0&=\g\Big( P_{f\o\ph}\big(\p_t(f\circ \varphi)\big)(x), T(f\circ \varphi).X_x \Big) \\&=
\g\Big( P_{f\o\ph}\big((\p_t f)\circ\varphi +Tf.(\p_t\varphi)\big)(x), T(f\circ \varphi).X_x \Big)
\\ &=
\g\Big( \big((P_{f}(\p_tf)) +P_{f}(Tf.\xi)\big)\big(\ph(x)\big),Tf\circ T\varphi.X_x \Big) 
\end{align*}
Since $T\varphi$ is surjective, $T\varphi.X$ exhausts the tangent space $T_{\varphi(x)}M$, and one has
$$\big((P_{f}(\p_tf)) +P_{f}(Tf.\xi)\big)\big(\ph(x)\big)\quad \perp \quad f.$$
This holds for all $x \in M$, and by the surjectivity of $\varphi$, one also has that
$$\big((P_{f}(\p_tf)) +P_{f}(Tf.\xi)\big)(x)\quad \perp \quad f$$
at all $x \in M$. 
This means that the tangential part $\big(P_{f}(\p_tf) +  P_f(Tf.\xi)\big)^\top$ vanishes.
Using the time dependent vector field
$$\xi=-(P_f^\top )\i \big((P_f \p_t f)^\top\big)$$
and its flow $\ph$ achieves this.
\qed\end{demo}

\subsection[Geodesic equation]{Geodesic equation on shape space}\label{so:gesh}

By the previous section and theorem~\ref{sh:sub}, 
geodesics in $B_i$ correspond exactly to horizontal geodesics in $\Imm$. 
The equations for horizontal geodesics in the space of immersions have been written down in 
section~\ref{sh:gesh}. Here they are specialized to Sobolev-type metrics: 
\begin{thm*} 
The geodesic equation on shape space for a Sobolev-type metric $G^P$ 
is equivalent to the set of equations
\begin{equation*}\left\{\begin{aligned}
f_t &= f_t^\hor \in \Hor, \\
(\nabla_{\p_t} f_t)^\hor &= 
\frac12P\i\Big(\adj{\nabla P}(f_t,f_t)^\bot -\g(Pf_t,f_t).\Tr^g(S)\Big)\\
&\qquad-P\i\Big(\big((\nabla_{f_t}P)f_t\big)^\bot-\Tr^g\big(\g(\nabla f_t,Tf)\big) Pf_t\Big),
\end{aligned}\right.\end{equation*}
where $f$ is a horizontal path of immersions.
\end{thm*}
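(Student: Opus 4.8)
The plan is to specialize the shape-space geodesic equation of Theorem~\ref{sh:gesh} to a Sobolev-type metric by inserting the explicit metric gradients computed in section~\ref{so:me}. First I would check that the standing hypothesis of Theorem~\ref{sh:gesh} — that every curve in $B_i$ lifts to a horizontal curve in $\Imm$ — is met: this is exactly what the Moser-trick argument of section~\ref{so:ho2} provides. It is therefore legitimate to work along a horizontal path $f$ and to compute the horizontal projection of $\frac12 H_f(f_t,f_t)-K_f(f_t,f_t)$.

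Next I would substitute the formulas from section~\ref{so:me}. At $h=k=f_t$ the symmetric term in $H$ collapses, since $\g(Pf_t,\nabla f_t)+\g(\nabla f_t,Pf_t)=2\g(Pf_t,\nabla f_t)$, giving
$$H_f(f_t,f_t)=P\i\Big(\adj{\nabla P}(f_t,f_t)^\bot-2\,Tf.\g(Pf_t,\nabla f_t)^\sharp-\g(Pf_t,f_t)\,\Tr^g(S)\Big)$$
and $K_f(f_t,f_t)=P\i\big((\nabla_{f_t}P)f_t+\Tr^g(\g(\nabla f_t,Tf))\,Pf_t\big)$. The structural fact I would exploit throughout is that along a horizontal curve $Pf_t$ is normal, i.e. $(Pf_t)^\top=0$; this makes both $\Tr^g(\g(\nabla f_t,Tf))\,Pf_t$ and $\g(Pf_t,f_t)\,\Tr^g(S)$ normal fields, while $-2\,Tf.\g(Pf_t,\nabla f_t)^\sharp$ is the only manifestly tangential contribution.

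Then I would apply the explicit vertical/horizontal splitting of section~\ref{so:ho}, namely $u^{\hor}=u-Tf.(P_f^\top)\i\big((P_f u)^\top\big)$. Writing the right-hand side as $u=P\i w$, one has $(P_f u)^\top=w^\top$, so the projection is governed entirely by the tangential part of $w$. The tangential term $-2\,Tf.\g(Pf_t,\nabla f_t)^\sharp$ and the top-order gradient $\grad^g\g(Pf_t,f_t)$ — the latter having already cancelled inside $H$ in section~\ref{so:me} against the tangential part of $\adj{\nabla P}$ — should drop out of the normal data, leaving $P\i$ applied to the surviving normal pieces $\frac12\adj{\nabla P}(f_t,f_t)^\bot-\frac12\g(Pf_t,f_t)\Tr^g(S)$ from $H$ together with the normal part of $(\nabla_{f_t}P)f_t$ and the mean-curvature term from $K$, which is the asserted equation.

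The step I expect to be the main obstacle is precisely this last projection, because $P_f$ does \emph{not} commute with the decomposition into tangential and normal fields, so $P\i$ of a normal section need not be horizontal. One cannot simply read the answer off as ``$P\i$ of the normal part of $w$''; the inverse $(P_f^\top)\i$ appearing in the splitting genuinely enters, and the bookkeeping must confirm that the stated right-hand side really is horizontal — that is, that applying $P_f$ to it and taking the tangential component returns zero. Verifying this against the explicit normal expressions for $\adj{\nabla P}(f_t,f_t)^\bot$, $\g(Pf_t,f_t)\Tr^g(S)$ and $\Tr^g(\g(\nabla f_t,Tf))Pf_t$, and tracking the signs of the mean-curvature and divergence-type terms carefully, is where the real work lies.
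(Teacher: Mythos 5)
Your route is the same as the paper's: the paper's entire proof of this theorem consists of exactly the citations you make (horizontal lifts via the Moser trick of section~\ref{so:ho2}, the correspondence of theorem~\ref{sh:sub}, the horizontal geodesic equation of section~\ref{sh:gesh}), followed by substitution of the gradients $H$ and $K$ from section~\ref{so:me}; the paper carries out no projection bookkeeping either. Up to and including the substitution, the collapse of the symmetric term, and the observation that horizontality ($(Pf_t)^\top=0$) makes $\g(Pf_t,f_t)\Tr^g(S)$ and $\Tr^g(\g(\nabla f_t,Tf))Pf_t$ normal, your outline matches the paper.

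The gap lies in the step you yourself single out, and your diagnosis of it is exactly backwards, so the check you propose would not close the argument. By section~\ref{so:ho}, $h\in\Hor_f$ if and only if $(P_fh)^\top=0$, i.e.\ $\Hor_f=P_f\i\big(\Ga(\Nor(f))\big)$; hence $P\i$ of a normal section \emph{is} horizontal by definition, contrary to your claim, and the verification you defer to the end (that $P_f$ applied to the stated right-hand side has vanishing tangential part) is a one-line triviality, since along a horizontal path that image is a sum of manifestly normal fields. What actually needs proof is that the stated right-hand side equals $\big(\tfrac12 H(f_t,f_t)-K(f_t,f_t)\big)^\hor$, equivalently that its difference from $\tfrac12 H(f_t,f_t)-K(f_t,f_t)$ is \emph{vertical}, i.e.\ of the form $Tf.\xi$. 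Discarding the tangential part of the argument of $P\i$ does not accomplish this: writing $\tfrac12 H-K=P\i w$, one has $P\i w=P\i(w^\bot)+P\i(Tf.w^\top)$, where $P\i(w^\bot)$ is horizontal but $P\i(Tf.w^\top)$ is \emph{not} vertical; the true projection is $P\i w-Tf.(P_f^\top)\i(w^\top)$, and the discrepancy $P\i(Tf.w^\top)-Tf.(P_f^\top)\i(w^\top)$ is itself a horizontal field which is nonzero in general, since applying $P_f$ to it gives $-P_{f,\bot}\big((P_f^\top)\i(w^\top)\big)$, which involves the second fundamental form and need not vanish. Handling this term requires input you never invoke, namely the identity $(\nabla_{\p_t}Pf_t)^\top=-\g(Pf_t,\nabla f_t)^\sharp$ valid along any horizontal path; this is exactly the computation the paper performs at the end of section~\ref{so:geshmo}, and it is the reason the momentum form is preferred there: the splitting of $p=Pf_t\otimes\vol(g)$ is algebraic and requires no inversion of $P_f^\top$. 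A further symptom that your bookkeeping is unfinished: with your accounting the $-K$ contribution survives as $-P\i\big(((\nabla_{f_t}P)f_t)^\bot+\Tr^g(\g(\nabla f_t,Tf))Pf_t\big)$, whose sign on the $\Tr^g(\g(\nabla f_t,Tf))Pf_t$ term is opposite to the one in the theorem, so your sketch does not literally land on the asserted equation.
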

These equations are not handable very well since taking the horizontal part
of a vector to $\Imm$ involves inverting an elliptic pseudo-differential operator, 
see section~\ref{so:ho}. 
However, the formulation in the next section is much better.

\subsection[Geodesic equation]{Geodesic equation on shape space in terms of the momentum}\label{so:geshmo}

The geodesic equation in terms of the momentum has been derived in section~\ref{sh:geshmo} 
for a general metric on shape space. Now it is specialized to Sobolev-type metrics
using the formula for the $H$-gradient from section~\ref{so:me}. 

As in section~\ref{so:gemo} the momentum $G^P(f_t,\cdot)$ is identified with
$Pf_t \otimes \vol(g)$. By definition, the momentum is horizontal if it annihilates all 
vertical vectors. This  is the case if and only if $Pf_t$ is normal to $f$. 
Thus the splitting of the momentum in horizontal and vertical parts is given by
$$Pf_t \otimes \vol(g) = (Pf_t)^\bot \otimes \vol(g) + Tf.(Pf_t)^\top \otimes \vol(g).$$
This is much simpler than the splitting of the velocity in horizontal and vertical parts where 
a pseudo-differential operator has to be inverted, see section~\ref{so:ho}.
Thus the following version of the geodesic equation on shape space is the easiest to solve.

\begin{thm*}
The geodesic equation on shape space is equivalent to the set of equations
for a path of immersions $f$:
\begin{equation*}
\left\{\begin{aligned}
p &= Pf_t \otimes \vol(g), \qquad Pf_t = (Pf_t)^\bot, \\
(\nabla_{\p_t}p)^\hor &= \frac12 \Big(\adj{\nabla P}(f_t,f_t)^\bot-\g(Pf_t,f_t).\Tr^g(S)\Big) \otimes \vol(g).
\end{aligned}\right.
\end{equation*}
\end{thm*}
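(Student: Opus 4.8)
The plan is to specialize the general momentum formulation of the geodesic equation on shape space from section~\ref{sh:geshmo} to the Sobolev-type metric $G^P$, using the explicit $H$-gradient computed in section~\ref{so:me}. Recall that section~\ref{sh:geshmo} gives the two equations $p = G_f(f_t,\cdot) \in \Hor$ and $(\nabla_{\p_t} p)^\hor = \frac12 G_f(H(f_t,f_t),\cdot)^\hor$ for a path of immersions $f$. First I would invoke section~\ref{so:gemo}, where the $G^P$-momentum $G^P_f(f_t,\cdot)$ is identified with $Pf_t \otimes \vol(g)$, so that the first equation becomes $p = Pf_t \otimes \vol(g)$.

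The key translation is between the horizontal/vertical splitting of momenta in $T^*\Imm$ and the normal/tangential splitting of the associated vector fields along $f$. A momentum of the form $\g(w,\cdot)\otimes\vol(g)$ pairs with a tangent vector $h$ by $\int_M \g(w,h)\vol(g)$, and it is horizontal precisely when it annihilates every vertical vector $Tf.X$. Since $\g(w,Tf.X) = g(w^\top, X)$, the condition $\int_M g(w^\top,X)\vol(g)=0$ for all $X \in \X(M)$ forces $w^\top = 0$ pointwise by non-degeneracy. Thus horizontality of $p = Pf_t \otimes \vol(g)$ is equivalent to $(Pf_t)^\top = 0$, i.e. $Pf_t = (Pf_t)^\bot$, which is the second half of the first line; this reproduces the characterization of the horizontal bundle from section~\ref{so:ho}, and more generally shows that taking the horizontal part of a momentum corresponds to taking the normal part of the underlying vector field.

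It remains to evaluate the right-hand side. Since $G^P_f(m,\cdot) = \g(P_f m,\cdot)\otimes\vol(g)$, one has $G^P_f(H(f_t,f_t),\cdot) = P_f H_f(f_t,f_t)\otimes\vol(g)$, and the $P_f\i$ in the $H$-gradient formula of section~\ref{so:me} cancels, leaving
$$P_f H_f(f_t,f_t) = \adj{\nabla P}(f_t,f_t)^\bot - Tf.\big(\g(Pf_t,\nabla f_t)+\g(\nabla f_t,Pf_t)\big)^\sharp - \g(Pf_t,f_t)\,\Tr^g(S).$$
By the previous paragraph, taking the horizontal part of this momentum amounts to taking the normal part of the vector field $P_f H_f(f_t,f_t)$. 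The first summand $\adj{\nabla P}(f_t,f_t)^\bot$ is already normal; the middle summand $Tf.(\cdots)^\sharp$ is tangential and hence drops out; and $\Tr^g(S)$, the vector valued mean curvature, is normal by section~\ref{no:we}, so the last summand survives. Therefore $(P_f H_f(f_t,f_t))^\bot = \adj{\nabla P}(f_t,f_t)^\bot - \g(Pf_t,f_t)\Tr^g(S)$, and multiplying by $\frac12$ and tensoring with $\vol(g)$ gives exactly the claimed second equation.

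The computation is essentially bookkeeping once the dictionary between $(\cdot)^\hor$ on momenta and $(\cdot)^\bot$ on vector fields is set up. I expect the only genuine subtlety to be this dictionary, together with the observation that the high-order tangential term $Tf.(\cdots)^\sharp$ — the term that was engineered in section~\ref{so:me} to cancel the leading-order $\grad^g \g(Pf_t,f_t)$ contribution — is annihilated by the horizontal projection, so it plays no role on shape space and the equation simplifies to the two visibly normal terms.
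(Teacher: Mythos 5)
Your proposal is correct and takes essentially the same route as the paper: the paper likewise specializes the general momentum formulation of section~\ref{sh:geshmo} using the identification $p = Pf_t\otimes\vol(g)$ from section~\ref{so:gemo}, the observation that horizontality of the momentum is exactly $(Pf_t)^\top=0$ (so the horizontal/vertical splitting of momenta is the normal/tangential splitting of the kernel vector field), and the $H$-gradient formula of section~\ref{so:me}, in which $P\i$ cancels, the tangential term $Tf.(\cdots)^\sharp$ is killed by the horizontal projection, and the normal field $\Tr^g(S)$ survives. The only addition in the paper is a redundant by-hand check (explicitly flagged as such) that for horizontal paths the tangential part of the full geodesic equation on $\Imm$ holds automatically, which your appeal to the equivalence already contained in section~\ref{sh:geshmo} (valid for Sobolev metrics by the horizontal lifting result of section~\ref{so:ho2}) makes unnecessary.
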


The equation for geodesics on $\Imm$ without the horizontality condition is
$$\nabla_{\p_t}p = \frac12\big(\adj{\nabla P}(f_t,f_t)^\bot-2Tf.\g(Pf_t,\nabla f_t)^\sharp
-\g(Pf_t,f_t).\Tr^g(S)\big)\otimes\vol(g),
$$
see section~\ref{so:gemo}. 
It has been proven in section~\ref{sh:gesh} that the vertical part of this equation 
is satisfied automatically when the geodesic is horizontal. 
Nevertheless this will be checked by hand because the proof is much simpler here
than in the general case. 

If $f_t$ is horizontal then by definition $Pf_t$ is normal to $f$. Thus one has for any $X \in \X(M)$ that
\begin{align*}
g\big((\nabla_{\p_t}Pf_t)^\top, X \big) &= 
\g(\nabla_{\p_t}Pf_t, Tf.X ) = 
0-\g(Pf_t,\nabla_{\p_t}Tf.X)\\&=
-\g(Pf_t,\nabla_X f_t)=
-g\big(\g(Pf_t,\nabla f_t)^\sharp,X).
\end{align*}
Thus 
\begin{align*}
\big(\nabla_{\p_t} p\big)^\vert &=
\big((\nabla_{\p_t} Pf_t) \otimes \vol(g) + Pf_t \otimes D_{(f,f_t)} \vol(g) \big)^\vert \\&=
Tf.(\nabla_{\p_t} Pf_t)^\top \otimes \vol(g) + Tf.(Pf_t)^\top \otimes D_{(f,f_t)} \vol(g) \\&=
-Tf.\g(Pf_t,\nabla f_t)^\sharp \otimes \vol(g) + 0 ,
\end{align*}
which is exactly the vertical part of the geodesic equation.

\section{Geodesic distance on shape space}\label{ge}

It came as a big surprise when it was discovered in \cite{Michor98} 
that the Sobolev metric of order zero induces vanishing geodesic distance on shape space $B_i$. 
It will be shown that this problem can be overcome by using higher order Sobolev metrics. 
The proof of this result is based on bounding the $G^P$-length of a path from below by its area swept out. 
The main result is in section~\ref{ge:no}.
The same ideas are contained in
\cite[section~2.4]{Bauer2010}, \cite[section~7]{Michor118} and \cite[section~3]{Michor102}.

\subsection{Geodesic distance on shape space}\label{ge:ge}

\emph{Geodesic distance} on $B_i$ is given by
$$\dist_{G^P}^{B_i}(F_0,F_1) = \inf_F L_{G^P}^{B_i}(F),$$
where the infimum is taken over all $F :[0,1] \to B_i$ with $F(0)=F_0$ and $F(1)=F_1$.
$L_{G^P}^{B_i}$ is the length of paths in $B_i$ given by
$$L_{G^P}^{B_i}(F) = \int_0^1 \sqrt{G^P_F(F_t,F_t)} dt \quad \text{for $F:[0,1] \to B_i$.}$$
Letting $\pi:\Imm \to B_i$ denote the projection, one has 
$$L_{G^P}^{B_i}(\pi \o f) =  L_{G^P}^{\Imm}(f) =\int_0^1 \sqrt{G^P_f(f_t,f_t)} dt$$
when $f:[0,1]\to \Imm$ is horizontal.
In the following sections, conditions on the metric $G^P$ ensuring that $\dist_{G^P}^{B_i}$ separates
points in $B_i$ will be developed.

\subsection{Vanishing geodesic distance}\label{ge:va}

\begin{thm*}
The distance $\dist_{H^0}^{B_i}$ induced by the Sobolev $L^2$ metric of order zero vanishes. 
Indeed it is possible to connect any two distinct shapes by a path of arbitrarily short length. 
\end{thm*}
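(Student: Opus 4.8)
The plan is to prove the statement by an explicit construction: for any two shapes and any $\ep>0$, I will exhibit a path between them of $G^0$-length less than $\ep$. I would not look for good (let alone geodesic) paths; instead I would exploit the fact that the $H^0$-length functional
$$L^{\Imm}_{G^0}(f)=\int_0^1\Big(\int_M \g(f_t,f_t)\,\vol(g)\Big)^{1/2}dt$$
couples the normal velocity to the induced volume $\vol(g)$, and that this coupling can be defeated by rapid oscillations. First I would reduce to a local, essentially flat model. Since $\Imm(M,N)$ is connected, the two given immersions are joined by some smooth path, which I subdivide into short pieces each lying in a single canonical chart; using the triangle inequality for $\dist^{B_i}_{G^0}$ together with a partition of unity to localize, it then suffices to treat two immersions that agree outside one small coordinate ball and differ there by a small normal graph. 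On such a ball the ambient metric $\g$ differs from the Euclidean one only by factors bounded above and below, so the flat computation controls the $G^0$-length up to a constant, and I may work with graphs $u\colon U\to\R$ over a domain $U\subset\R^m$, for which $\vol(g)=\sqrt{1+|\nabla u|^2}\,dx$ and $\g(f_t,f_t)=\dot u^2$.

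The heart of the argument is then the Michor--Mumford corrugation construction. I would connect the two graphs not directly but through highly corrugated intermediate surfaces: introduce $n$ fine ``teeth'' of width $\sim 1/n$, and realize the prescribed net displacement by a coordinated growing, migrating, and flattening of these teeth rather than by a rigid normal motion. The guiding heuristic is that where a tooth is steep the induced area is large but the surface is nearly parallel to the direction of motion, so the relevant normal speed $\dot u$ is small there, whereas where $\dot u$ is large the area is small; by tuning the amplitude $a_n$ against the number of teeth $n$ one can keep the integrand $\dot u^2\sqrt{1+|\nabla u|^2}$ small in an $L^1(dx)$-sense at every time. Performing the $n$ teeth simultaneously rather than one after another is essential: their energies add under the inner integral, but the square root in the length functional, combined with the scaling of $a_n$, then yields a bound of the form $L^{\Imm}_{G^0}(f^n)\le C\,n^{-\al}\to 0$ for a suitable exponent $\al>0$.

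The main obstacle is exactly this quantitative scaling estimate: one must balance three competing quantities --- the amplitude of the teeth, their number (frequency), and the area swept out --- so that the growth of $\vol(g)$ is outpaced by the decay of the normal velocity while the teeth still accomplish the required displacement, and one must simultaneously check that the intermediate maps remain immersions, so that the path really lies in $\Imm$ and descends to $B_i$. Once such a family $f^n$ is constructed with $L^{\Imm}_{G^0}(f^n)\to 0$, horizontality is not needed for an upper bound: since $\pi:\Imm\to B_i$ is a Riemannian submersion, projecting a path does not increase its length, so $\dist^{B_i}_{G^0}\big(\pi(f_0),\pi(f_1)\big)\le L^{\Imm}_{G^0}(f^n)\to 0$. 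As the two shapes and $\ep$ were arbitrary, the geodesic distance vanishes identically, and the same construction shows the distance between any two \emph{distinct} shapes can be made arbitrarily small. For the detailed teeth construction, which is the only genuinely delicate part, I would refer to \cite{Michor98, Michor102}.
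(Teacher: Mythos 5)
Your proposal is essentially the same approach as the paper, which in fact offers no proof of its own but simply cites Michor--Mumford \cite{Michor98, Michor102}; your sketch is precisely the corrugation (``teeth'') argument of those references, and like the paper you defer the genuinely delicate quantitative construction to the same sources. One caution: the quantity you must make small is the \emph{projected} (normal-velocity) integrand --- for a vertically moving graph this is $\dot u^{2}\,(1+|\nabla u|^{2})^{-1/2}\,dx$, not $\dot u^{2}\sqrt{1+|\nabla u|^{2}}\,dx$ as written, since the latter (the full-velocity length in $\Imm$) is bounded below by the swept volume and cannot tend to zero.
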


This result was first established by Michor and Mumford 
for the case of planar curves in \cite{Michor98}.
A more general version can be found in \cite{Michor102}, where the same result 
is proven also on diffeomorphism groups.

\subsection{Area swept out}\label{ge:ar}

For a path of immersions $f$ seen as a mapping $f:[0,1] \x M \to N$ one has
$$(\text{area swept out by $f$})=\int_{[0,1]\x M} \vol(f(\cdot,\cdot)^* \g) 
=\int_0^1 \int_M \norm{f_t^\bot} \vol(g) dt.$$

\subsection{Area swept out bound}\label{ge:ar1}

\begin{lem*}
Let $G^P$ be a Sobolev type metric that is at least as strong as the $H^0$-metric, i.e. 
there is a constant $C_1 > 0$ such that
\begin{align*}
\norm{h}_{G^P} \geq C_1 \norm{h}_{H^0} = C_1 \sqrt{\int_M \g(h,h) \vol(g)} \qquad \text{for all $h \in T\Imm$.} 
\end{align*}
Then one has the area swept out bound for any path of immersions $f$:
\begin{align*}
C_1 \ (\text{area swept out by $f$})
\leq \max_t \sqrt{\Vol\big(f(t)\big)} . L_{G^P}^{\Imm}(f).
\end{align*}
\end{lem*}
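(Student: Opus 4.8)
The plan is to bound the area integrand pointwise and then apply the Cauchy--Schwarz inequality on each time slice $M$. First I would throw away the tangential contribution: since $f_t^\bot$ is the $\g$-orthogonal projection of $f_t$ onto the normal bundle, the decomposition $f_t = Tf.f_t^\top + f_t^\bot$ is orthogonal, so $\norm{f_t^\bot}\le\norm{f_t}$ at every point, and hence
\begin{align*}
\int_M \norm{f_t^\bot}\,\vol(g) \le \int_M \norm{f_t}\,\vol(g).
\end{align*}
Next I would read the right-hand side as the $L^2(M,\vol(g))$-pairing of $\norm{f_t}$ with the constant function $1$ and apply Cauchy--Schwarz:
\begin{align*}
\int_M \norm{f_t}\,\vol(g) \le \Big(\int_M \norm{f_t}^2\,\vol(g)\Big)^{1/2}\Big(\int_M 1\,\vol(g)\Big)^{1/2} = \norm{f_t}_{H^0}\sqrt{\Vol(f(t))},
\end{align*}
where the first factor is by definition the $H^0$-norm of $f_t$ and the second is $\sqrt{\Vol(f(t))}$ as in section~\ref{no:vo}.

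Then I would invoke the standing hypothesis that $G^P$ dominates the $H^0$-metric, i.e. $\norm{f_t}_{H^0}\le C_1\i\norm{f_t}_{G^P}$, and replace the time-dependent volume by its maximum over $[0,1]$. Combining the three estimates gives, for each fixed $t$,
\begin{align*}
\int_M \norm{f_t^\bot}\,\vol(g) \le \frac{1}{C_1}\,\max_t\sqrt{\Vol(f(t))}\,\norm{f_t}_{G^P}.
\end{align*}
Integrating over $t\in[0,1]$, recalling that the area swept out equals $\int_0^1\int_M\norm{f_t^\bot}\,\vol(g)\,dt$ (section~\ref{ge:ar}) and that $L_{G^P}^{\Imm}(f)=\int_0^1\norm{f_t}_{G^P}\,dt$, and finally multiplying through by $C_1$, yields the asserted bound.

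There is no genuine obstacle here; the whole statement is a chain of elementary inequalities. The only points that require care are applying Cauchy--Schwarz with the correct weight $\sqrt{\Vol(f(t))}$ (rather than, say, a fixed constant), and pulling the maximum $\max_t\sqrt{\Vol(f(t))}$ outside the time integral \emph{before} integrating in $t$, so that it multiplies the $G^P$-length cleanly.
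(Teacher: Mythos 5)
Your proof is correct and takes essentially the same route as the paper's: discard the tangential part via the pointwise orthogonal decomposition, apply Cauchy--Schwarz against the constant function $1$ on each time slice, invoke the $H^0$-domination hypothesis, and pull $\max_t\sqrt{\Vol(f(t))}$ out of the time integral. The only cosmetic difference is that you drop the tangential part pointwise before applying Cauchy--Schwarz to $\norm{f_t}$, whereas the paper first passes to $H^0$-norms, uses $\norm{f_t}_{H^0}\geq\norm{f_t^\bot}_{H^0}$, and then applies Cauchy--Schwarz to $\norm{f_t^\bot}$; the two orderings are equivalent.
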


The proof is an adaptation of the one given in \cite[section~7.3]{Michor118} for almost local metrics.

\begin{proof}
\begin{align*}
L_{G^P}^{\Imm}(f)&=\int_0^1 \norm{f_t}_{G^P} dt \geq
C_1 \int_0^1 \norm{f_t}_{H^0} dt \\&\geq 
C_1 \int_0^1 \norm{f_t^\bot}_{H^0} dt =
C_1 \int_0^1 \Big(\int_M \norm{f_t^\bot}^2 \vol(g) \Big)^{\frac12} dt \\&\geq
C_1 \int_0^1 \Big(\int_M \vol(g) \Big)^{-\frac12} \int_M 1.\norm{f_t^\bot} \vol(g) dt\\&\geq
C_1 \min_t \Big(\int_M \vol(g) \Big)^{-\frac12} \int_{[0,1]\x M} \vol(f(\cdot,\cdot)^* \g)  \\&=
C_1 \Big(\max_t \int_M \vol(g) \Big)^{-\frac12}\ (\text{area swept out by $f$}) 
\qedhere
\end{align*}
\end{proof}

\subsection{Lipschitz continuity of $\sqrt{\Vol}$}\label{ge:li}

\begin{lem*}
Let $G^P$ be a Sobolev type metric that is at least as strong as the $H^1$-metric, i.e. 
there is a constant $C_2 > 0$ such that
\begin{align*}
\norm{h}_{G^P} \geq C_2 \norm{h}_{H^1} = 
C_2 \sqrt{\int_M \g\big( (1+\Delta) h,h \big) \vol(g)} \qquad \text{for all $h \in T\Imm$.} 
\end{align*}
Then the mapping
$$\sqrt{\Vol}:(B_i,\dist_{G^P}^{B_i}) \to \R_{\geq 0}$$
is Lipschitz continuous, i.e. for all $F_0$ and $F_1$ in $B_i$ one has:
$$
\sqrt{\Vol(F_1)}-\sqrt{\Vol(F_0)} 
\leq \frac{1}{2 C_2} \dist_{G^P}^{B_i}(F_0,F_1).
$$
\end{lem*}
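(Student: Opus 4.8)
The plan is to establish the infinitesimal form of the inequality along an arbitrary path and then integrate. Since $\Vol$ is invariant under $\Diff(M)$ it descends to a well-defined function $\Vol$ on $B_i$, and $\sqrt{\Vol(F_i)}=\sqrt{\Vol(f(i))}$ for any immersion $f(i)$ representing $F_i$. I would therefore fix a smooth path $F:[0,1]\to B_i$ with $F(0)=F_0$, $F(1)=F_1$, lift it to a \emph{horizontal} path $f:[0,1]\to\Imm(M,N)$ using section~\ref{so:ho2} (so that $L_{G^P}^{\Imm}(f)=L_{G^P}^{B_i}(F)$, as noted in section~\ref{ge:ge}), prove the bound $\p_t\sqrt{\Vol(f)}\le\tfrac{1}{2C_2}\norm{f_t}_{G^P}$ for each $t$, integrate over $[0,1]$, and finally take the infimum over all such $F$.

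First I would differentiate the volume. By the variational formula of section~\ref{va:vo},
\[
\p_t\Vol(f)=\int_M D_{(f,f_t)}\vol(g)=\int_M\Big(\on{div}^g(f_t^\top)-\g\big(f_t^\bot,\Tr^g(S)\big)\Big)\vol(g).
\]
As $M$ is compact without boundary the divergence integrates to zero, leaving
\[
\p_t\Vol(f)=-\int_M\g\big(f_t^\bot,\Tr^g(S)\big)\vol(g)=\int_M\Tr^g\big(\g(\nabla f_t,Tf)\big)\vol(g).
\]
The key structural point is that the growth of the volume depends only on the normal component $f_t^\bot$; this is exactly what lets the estimate descend to $B_i$, where the tangential motion is quotiented out.

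The heart of the argument is a Cauchy--Schwarz estimate for the second integral. Viewing $\nabla f_t$ and $Tf$ as sections of $T^*M\otimes f^*TN$ equipped with the induced metric $g^0_1\otimes\g$, the integrand is their pointwise inner product, and $Tf$ is a $\g$-isometry of $(T_xM,g)$ onto its image. Applying Cauchy--Schwarz pointwise and then in $L^2(\vol(g))$, and using $\Delta=\nabla^*\nabla$ from section~\ref{no:la}, I would obtain
\[
\big|\p_t\sqrt{\Vol(f)}\big|=\frac{\big|\p_t\Vol(f)\big|}{2\sqrt{\Vol(f)}}\le\tfrac12\norm{\nabla f_t}_{H^0}\le\tfrac12\norm{f_t}_{H^1},
\]
since $\norm{\nabla f_t}_{H^0}^2=\int_M\g(\Delta f_t,f_t)\vol(g)\le\norm{f_t}_{H^1}^2$. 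The hypothesis $\norm{\cdot}_{G^P}\ge C_2\norm{\cdot}_{H^1}$ then gives $\p_t\sqrt{\Vol(f)}\le\tfrac{1}{2C_2}\norm{f_t}_{G^P}$.

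Integrating over $[0,1]$ yields $\sqrt{\Vol(f(1))}-\sqrt{\Vol(f(0))}\le\tfrac{1}{2C_2}\int_0^1\norm{f_t}_{G^P}\,dt=\tfrac{1}{2C_2}L_{G^P}^{\Imm}(f)=\tfrac{1}{2C_2}L_{G^P}^{B_i}(F)$, and taking the infimum over all admissible $F$ produces the claimed bound by $\tfrac{1}{2C_2}\dist_{G^P}^{B_i}(F_0,F_1)$. I expect the main obstacle to be the bookkeeping in the Cauchy--Schwarz step: one must verify that, after discarding the exact divergence, the remaining term is controlled by a \emph{single} covariant derivative of $f_t$ (so that only $\norm{\nabla f_t}_{H^0}$, hence $\norm{f_t}_{H^1}$, enters), and one must track the constant carefully, since pairing against $Tf$ produces factors depending on $\dim M$ that must be reconciled with the stated constant. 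A secondary point requiring care is the appeal to section~\ref{so:ho2}, which furnishes the horizontal lift and thereby identifies $L_{G^P}^{\Imm}(f)$ with $L_{G^P}^{B_i}(F)$, making the infima over paths in $\Imm$ and in $B_i$ agree.
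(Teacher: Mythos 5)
Your proposal is correct and follows essentially the same route as the paper's proof: the variational formula of section~\ref{va:vo}, discarding the divergence term, Cauchy--Schwarz applied to $\int_M (g^0_1\otimes\g)(\nabla f_t,Tf)\,\vol(g)$, the bound $\norm{\nabla f_t}_{H^0}\le\norm{f_t}_{H^1}$, integration in $t$, and an infimum over connecting paths (you invoke the horizontal lift of section~\ref{so:ho2} explicitly, while the paper leaves that identification implicit by taking the infimum over paths in $\Imm$). The dimension factor you flag is a genuine point, but it is equally suppressed in the paper's own proof, which uses $\sqrt{\int_M\norm{Tf}^2_{g^0_1\otimes\g}\vol(g)}\le\sqrt{\Vol}$ even though $\norm{Tf}^2_{g^0_1\otimes\g}=\dim M$ pointwise, so strictly both arguments yield the constant $\sqrt{\dim M}/(2C_2)$; you are therefore not missing anything the paper handles.
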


For the case of planar curves, this has been proven in  \cite[section~4.7]{Michor107}.

\begin{proof}
\begin{align*}
\p_t \Vol &= \int_M \Big(\on{div}^g(f_t^\top)-\g\big(f_t^\bot, \Tr^g(S)\big)\Big) \vol(g) \\&=
0+\int_M \g(f_t, \nabla^* Tf) \vol(g) =
\int_M (g^0_1  \otimes \g)(\nabla f_t, Tf) \vol(g) \\&\leq
\sqrt{\int_M  \norm{\nabla f_t}_{g^0_1  \otimes \g}^2 \vol(g)} 
\sqrt{\int_M  \norm{Tf}_{g^0_1  \otimes \g}^2 \vol(g)} \\&\leq
\norm{f_t}_{H^1}\ \sqrt{\Vol}  \leq \frac{1}{C_2}  \norm{f_t}_{G^P}\ \sqrt{\Vol} .
\end{align*}
Thus
\begin{align*}
\p_t \sqrt{\Vol(f)}=\frac{\p_t \Vol(f)}{2 \sqrt{\Vol(f)}}\leq
\frac{1}{2 C_2} \norm{f_t}_{G^P}.
\end{align*}
By integration one gets
\begin{align*}
\sqrt{\Vol(f_1)}-\sqrt{\Vol(f_0)} &= 
\int_0^1 \p_t \sqrt{\Vol(f)}dt \leq 
\int_0^1 \frac{1}{2 C_2} \norm{f_t}_{G^P} = 
\frac{1}{2 C_2}\ L_{G^P}^{\Imm}(f).
\end{align*}
Now the infimum over all paths $f:[0,1] \rightarrow \Imm$ with $\pi(f(0))=F_0$ and $\pi(f(1))=F_1$ is taken. 
\end{proof}

\subsection{Non-vanishing geodesic distance}\label{ge:no}

Using the estimates proven above and the fact that the area swept out separates points at least on $B_e$, 
one gets the following result:

\begin{thm*}
The Sobolev type metric $G^P$ induces non-vanishing geodesic distance on $B_e$ if it is
stronger or as strong as the $H^1$-metric, i.e. if there is a constant $C > 0$ such that
\begin{align*}
\norm{h}_{G^P} \geq C \norm{h}_{H^1} = 
C \sqrt{\int_M \g\big( (1+\Delta) h,h \big) \vol(g)} \qquad \text{for all $h \in T\Imm$.} 
\end{align*}
\end{thm*}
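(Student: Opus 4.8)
The plan is to combine the two estimates already established—the area-swept-out bound of section~\ref{ge:ar1} and the Lipschitz continuity of $\sqrt{\Vol}$ from section~\ref{ge:li}—with the fact that the area swept out separates points on $B_e$. First I would note that since $\Delta=\nabla^*\nabla$ is positive one has $\norm{h}_{H^1}\ge\norm{h}_{H^0}$, so the hypothesis $\norm{h}_{G^P}\ge C\norm{h}_{H^1}$ supplies both comparison bounds at once: the area-swept-out lemma of section~\ref{ge:ar1} applies with $C_1=C$ and the Lipschitz lemma of section~\ref{ge:li} with $C_2=C$.

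Next I would fix two distinct shapes $F_0,F_1\in B_e$. Since the area swept out separates points on $B_e$, there is a constant $a_0=a_0(F_0,F_1)>0$ such that every path of immersions $f$ joining a representative of $F_0$ to one of $F_1$ sweeps out area at least $a_0$. Writing $L:=L_{G^P}^{\Imm}(f)$ for its length, I would apply the path-length form of the Lipschitz estimate (the inequality proved in section~\ref{ge:li} before the infimum is taken) to each initial segment $f|_{[0,t]}$, obtaining $\sqrt{\Vol(f(t))}\le\sqrt{\Vol(F_0)}+\tfrac{1}{2C}L$ for all $t$, and hence $\max_t\sqrt{\Vol(f(t))}\le\sqrt{\Vol(F_0)}+\tfrac{1}{2C}L$.

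Feeding this into the area-swept-out bound of section~\ref{ge:ar1} gives
$$C\,a_0\ \le\ C\cdot(\text{area swept out by }f)\ \le\ \max_t\sqrt{\Vol(f(t))}\cdot L\ \le\ \Big(\sqrt{\Vol(F_0)}+\tfrac{1}{2C}L\Big)L.$$
This is a quadratic inequality $\tfrac{1}{2C}L^2+\sqrt{\Vol(F_0)}\,L-C a_0\ge 0$ in $L$ with positive leading coefficient, whose only nonnegative solutions satisfy $L\ge L_*:=C\big(\sqrt{\Vol(F_0)+2a_0}-\sqrt{\Vol(F_0)}\big)>0$. Since $L_*$ depends only on $F_0$ and $a_0$, and not on the chosen path, taking the infimum over all joining paths yields $\dist_{G^P}^{B_i}(F_0,F_1)\ge L_*>0$; and because $B_e\subset B_i$ forces $\dist^{B_e}\ge\dist^{B_i}$, the same positive lower bound persists on $B_e$, which is the claim.

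I expect the analytic heavy lifting to reside entirely in the two cited lemmas, so the genuinely delicate input is the assertion that the area swept out separates points on $B_e$—and, tellingly, fails to do so on all of $B_i$, which is exactly why the theorem is stated for embeddings. Once that is granted, the remaining work is the elementary bookkeeping of closing the loop: one must control $\max_t\sqrt{\Vol}$ rather than merely the endpoint volumes, which forces the Lipschitz estimate to be applied along every initial segment, and then assemble the pieces into the quadratic inequality from which the strictly positive lower bound $L_*$ drops out.
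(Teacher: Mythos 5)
Your proposal is correct and takes essentially the same route as the paper: the paper's (very terse) proof likewise combines the area-swept-out bound of lemma~\ref{ge:ar1} with the Lipschitz continuity of $\sqrt{\Vol}$ from lemma~\ref{ge:li} and the separation property of swept-out area on $B_e$. Your write-up merely makes explicit the bookkeeping the paper leaves implicit -- applying the Lipschitz estimate along initial segments to control $\max_t\sqrt{\Vol(f(t))}$ and extracting the positive lower bound from the resulting quadratic inequality -- which is exactly the intended argument.
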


\begin{proof}
By lemma \ref{ge:ar1} we have 
\begin{align*}
C_1 \ (\text{area swept out by $f$})
\leq \max_t \sqrt{\Vol\big(f(t)\big)} . L_{G^P}^{\Imm}(f).
\end{align*}
Now we use the Lipschitz continuity \ref{ge:li} of $\sqrt{\Vol}$ and that area swept out separates
points on $B_e$. 
\end{proof}

\section[Sobolev metrics induced by the Laplacian]{Sobolev metrics induced by the Laplace operator}\label{la}

The results on non-vanishing geodesic distance from the previous section lead us to consider 
operators $P$ that are induced by the Laplacian operator: 
$$P=1+A \Delta^p, \quad P \in \Ga\big(L(T\Imm;T\Imm)\big)$$
for a constant $A>0$. 
(See section~\ref{no:la} for the definition of the Laplacian that is used in this work.)
At every $f \in \Imm$, $P_f$ is a positive, selfadjoint and bijective operator
of order $2p$ acting on $T_f\Imm = \Ga(f^*TN)$.
Note that $\Delta$ depends smoothly on the immersion $f$ via the pullback-metric $f^*\g$, 
so that the same is true of $P$.
$P$ is invariant under the action of the reparametrization group $\on{Diff}(M)$.
It induces the Sobolev metric
$$G_f^P(h,k)=\int_M \g\big(P_f (h),k\big) \vol(g)
=\int_M \g\Big(\big(1+A (\De^{f^*\g})^p \big)h,k\Big) \vol(f^*\g). $$
When $A=1$ we write $H^p := G^{1+\De^p}$.

In this section we will calculate explicitly for $P=1+A \De^p$
the geodesic equation and conserved momenta that have been deduced 
in section~\ref{so} for a general operator $P$. The hardest part will be the 
partial integration needed for the adjoint of $\nabla P$. 
As a result we will get explicit formulas that are ready to be implemented numerically.

\subsection{Other choices for $P$}\label{la:ot}

Other choices for $P$ are the operator $P=1+A (\nabla^*)^p \nabla^p$ 
corresponding to the metric
$$G_f^P(h,k)=\int_M \big(\g(h,k)+A \g(\nabla^p h,\nabla^p k) \big) \vol(g),$$
and other operators that differ only in lower order terms. Since these operators all 
have the same principal symbol, they induce equivalent metrics on each tangent space $T_f \Imm$.
It would be interesting to know if the induced geodesic distances on $B_i$ are equivalent as well.

\subsection{Adjoint of $\nabla P$}\label{la:ad}

To find a formula for the geodesic equation one has to calculate the adjoint of $\nabla P$, 
see section~\ref{so:ge}.
The following calculations at the same time show the existence of the adjoint.
It has been shown in section~\ref{so:ad}
that the invariance of the operator $P$ with respect to reparametrizations
determines the tangential part of the adjoint:
\begin{align*} 
\adj{\nabla P}(h,k)\big)^\top &=\grad^g \g(Ph,k)-\big(\g(Ph,\nabla k)+\g(\nabla h,Pk)\big)^\sharp.
\end{align*} 
It remains to calculate its normal part using the variational formulas from section~\ref{va}.

In the following calculations there will be terms of the form $\Tr(g\i s_1g\i s_2)$, 
where $s_1,s_2$ are two-forms on $M$. 
When the two-forms are seen as mappings $TM \to T^*M$, they can be composed with $g\i:T^*M \to TM$. 
Thus the expression under the trace is a mapping $TM \to TM$ to which the trace can be applied. 
When one of the two-forms is vector valued, the same tensor components as before are contracted.
For example when $h \in \Ga(f^*TN)$ then $s_2=\nabla^2 h$ is a two-form on $M$ with values in $f^*TN$. 
Then in the expression $\Tr(g\i.s_1.g\i.s_2)$ only $TM$ and $T^*M$ components are contracted, 
whereas the $f^*TN$ component remains unaffected.

{\allowdisplaybreaks
\begin{align*}
&\int_M \g\big(m^\bot,\adj{\nabla P}(h,k)\big) \vol(g)=
\int_M \g\big((\nabla_{m^\bot} P)h,k\big) \vol(g)\\
&\quad=A\sum_{i=0}^{p-1}\int_M\g((\nabla_{m^\bot}\Delta)\Delta^{p-i-1}h ,\Delta^{i}k )\vol(g)\\
&\quad =A\sum_{i=0}^{p-1}\int_M\g\Big(\Tr\Big(g\i.D_{(f,m^\bot)}g.g\i\nabla^2\Delta^{p-i-1}h\Big) ,\Delta^{i}k \Big)\vol(g)\\
&\qquad\qquad -A\sum_{i=0}^{p-1}\int_M\g\Big(\nabla_{\big(\nabla^*(D_{(f,m^\bot)}g)+\frac12 
d\Tr^g(D_{(f,m^\bot)}g)\big)^\sharp}\Delta^{p-i-1}h ,\Delta^{i}k \Big)\vol(g)\\
&\qquad\qquad+A\sum_{i=0}^{p-1}\int_M\g\Big(\nabla^*R^{\g}(m^\bot,Tf)\Delta^{p-i-1}h ,\Delta^{i}k \Big)\vol(g)\\
&\qquad\qquad-A\sum_{i=0}^{p-1}\int_M\g\Big(\Tr^g\big(R^{\g}(m^\bot,Tf)\nabla\Delta^{p-i-1}h\big) ,\Delta^{i}k \Big)\vol(g)\\
&=
A\sum_{i=0}^{p-1}\int_M\Tr\Big(g\i.D_{(f,m^\bot)}g.g\i \g(\nabla^2\Delta^{p-i-1}h,\Delta^{i}k )\Big) \vol(g)\\
&\qquad\qquad -A\sum_{i=0}^{p-1}\int_M (g^0_1\otimes \g)\Big(\nabla\Delta^{p-i-1}h ,(\nabla^*D_{(f,m^\bot)}g)\otimes\Delta^{i}k \Big)\vol(g)\\
&\qquad\qquad -A\sum_{i=0}^{p-1}\int_M (g^0_1\otimes \g)\Big(\nabla\Delta^{p-i-1}h ,\frac12 d\Tr^g(D_{(f,m^\bot)}g)\otimes\Delta^{i}k \Big)\vol(g)\\
&\qquad\qquad+A\sum_{i=0}^{p-1}\int_M(g^0_1\otimes \g)\Big(R^{\g}(m^\bot,Tf)\Delta^{p-i-1}h ,\nabla\Delta^{i}k \Big)\vol(g)\\
&\qquad\qquad-A\sum_{i=0}^{p-1}\int_M\g\Big(\Tr^g\big(R^{\g}(m^\bot,Tf)\nabla\Delta^{p-i-1}h\big) ,\Delta^{i}k \Big)\vol(g)
\end{align*}
Using the following  symmetry property of the curvature tensor (see \cite[24.4.4]{MichorH}):
$$\g(R^{\g}(X,Y)Z,U)=-\g(R^{\g}(Y,X)Z,U)=-\g(R^{\g}(Z,U)Y,X)$$
yields:
\begin{align*}
&\int_M \g\big(m^\bot,\adj{\nabla P}(h,k)\big) \vol(g)=\\
&\qquad=
A\sum_{i=0}^{p-1}\int_Mg^0_2\Big(D_{(f,m^\bot)}g,\g(\nabla^2\Delta^{p-i-1}h,\Delta^{i}k )\Big) \vol(g)\\
&\qquad\qquad -A\sum_{i=0}^{p-1}\int_M g^0_1\Big(\g(\nabla\Delta^{p-i-1}h,\Delta^{i}k),\nabla^*D_{(f,m^\bot)}g \Big)\vol(g)\\
&\qquad\qquad -A\sum_{i=0}^{p-1}\int_M g^0_1\Big(\g(\nabla\Delta^{p-i-1}h,\Delta^{i}k) ,\frac12 \nabla\Tr^g(D_{(f,m^\bot)}g) \Big)\vol(g)\\
&\qquad\qquad+A\sum_{i=0}^{p-1}\int_M\g\Big(\Tr^g\big(R^{\g}(\Delta^{p-i-1}h,\nabla\Delta^{i}k)Tf\big) ,m^\bot \Big)\vol(g)\\
&\qquad\qquad-A\sum_{i=0}^{p-1}\int_M\g\Big(\Tr^g\big(R^{\g}(\nabla\Delta^{p-i-1}h,\Delta^{i}k)Tf\big) , m^\bot\Big)\vol(g)\\
&\qquad=
A\sum_{i=0}^{p-1}\int_Mg^0_2\Big(D_{(f,m^\bot)}g,\g(\nabla^2\Delta^{p-i-1}h,\Delta^{i}k )\Big) \vol(g)\\
&\qquad\qquad -A\sum_{i=0}^{p-1}\int_M g^0_2\Big(\nabla\g(\nabla\Delta^{p-i-1}h,\Delta^{i}k),D_{(f,m^\bot)}g \Big)\vol(g)\\
&\qquad\qquad -\frac{A}{2}\sum_{i=0}^{p-1}\int_M \Big(\nabla^*\g(\nabla\Delta^{p-i-1}h,\Delta^{i}k) \Big)\Tr^g(D_{(f,m^\bot)}g) \vol(g)\\
&\qquad\qquad+A\sum_{i=0}^{p-1}\int_M\g\Big(\Tr^g\big(R^{\g}(\Delta^{p-i-1}h,\nabla\Delta^{i}k)Tf \big),m^\bot \Big)\vol(g)\\
&\qquad\qquad-A\sum_{i=0}^{p-1}\int_M\g\Big(\Tr^g\big(R^{\g}(\nabla\Delta^{p-i-1}h,\Delta^{i}k)Tf\big) , m^\bot\Big)\vol(g)\\
&\qquad=
A\sum_{i=0}^{p-1}\int_Mg^0_2\Big(D_{(f,m^\bot)}g,\g(\nabla^2\Delta^{p-i-1}h,\Delta^{i}k )\Big) \vol(g)\\
&\qquad\qquad -A\sum_{i=0}^{p-1}\int_M g^0_2\Big(\g(\nabla^2\Delta^{p-i-1}h,\Delta^{i}k),D_{(f,m^\bot)}g \Big)\vol(g)\\
&\qquad\qquad -A\sum_{i=0}^{p-1}\int_M g^0_2\Big(\g(\nabla\Delta^{p-i-1}h,\nabla\Delta^{i}k),D_{(f,m^\bot)}g \Big)\vol(g)\\
&\qquad\qquad -\frac{A}{2}\sum_{i=0}^{p-1}\int_M \Big(\nabla^*\g(\nabla\Delta^{p-i-1}h,\Delta^{i}k) \Big)\Tr^g(D_{(f,m^\bot)}g) \vol(g)\\
&\qquad\qquad+A\sum_{i=0}^{p-1}\int_M\g\Big(\Tr^g\big(R^{\g}(\Delta^{p-i-1}h,\nabla\Delta^{i}k)Tf \big), m^\bot \Big)\vol(g)\\
&\qquad\qquad-A\sum_{i=0}^{p-1}\int_M\g\Big(\Tr^g\big(R^{\g}(\nabla\Delta^{p-i-1}h,\Delta^{i}k)Tf\big) , m^\bot\Big)\vol(g)\\
&\qquad=
-A\sum_{i=0}^{p-1}\int_Mg^0_2\Big(D_{(f,m^\bot)}g,\g(\nabla\Delta^{p-i-1}h,\nabla\Delta^{i}k )\Big) \vol(g)\\
&\qquad\quad -\frac{A}{2}\sum_{i=0}^{p-1}\int_M 
\Big(\nabla^*\g(\nabla\Delta^{p-i-1}h,\Delta^{i}k) \Big)\Tr^g(D_{(f,m^\bot)}g) \vol(g)\\
&\qquad\qquad+A\sum_{i=0}^{p-1}\int_M\g\Big(\Tr^g\big(R^{\g}(\Delta^{p-i-1}h,\nabla\Delta^{i}k)Tf \big), m^\bot \Big)\vol(g)\\
&\qquad\qquad-A\sum_{i=0}^{p-1}\int_M\g\Big(\Tr^g\big(R^{\g}(\nabla\Delta^{p-i-1}h,\Delta^{i}k)Tf\big) , m^\bot\Big)\vol(g)\\
&\qquad=
-A\sum_{i=0}^{p-1}\int_Mg^0_2\Big(-2.\g(m^\bot,S),\g(\nabla\Delta^{p-i-1}h,\nabla\Delta^{i}k )\Big) \vol(g)\\
&\qquad\qquad -\frac{A}{2}\sum_{i=0}^{p-1}\int_M \Big(\nabla^*\g(\nabla\Delta^{p-i-1}h,\Delta^{i}k) \Big)\Tr^g\big(-2.\g(m^\bot,S)\big) \vol(g)\\
&\qquad\qquad+A\sum_{i=0}^{p-1}\int_M\g\Big(\Tr^g\big(R^{\g}(\Delta^{p-i-1}h,\nabla\Delta^{i}k)Tf \big),m^\bot \Big)\vol(g)\\
&\qquad\qquad-A\sum_{i=0}^{p-1}\int_M\g\Big(\Tr^g\big(R^{\g}(\nabla\Delta^{p-i-1}h,\Delta^{i}k)Tf\big) , m^\bot\Big)\vol(g)
\\&\qquad=
\int_M \g\Big(m^\bot,2A\sum_{i=0}^{p-1}\Tr\big(g\i S g\i \g(\nabla\Delta^{p-i-1}h,\nabla\Delta^{i}k ) \big)\Big)\\&\qquad\qquad
+\int_M \g\Big(m^\bot,A\sum_{i=0}^{p-1} \big(\nabla^*\g(\nabla\Delta^{p-i-1}h,\Delta^{i}k) \big) \Tr^g(S)\Big) \vol(g)\\
&\qquad\qquad+A\sum_{i=0}^{p-1}\int_M\g\Big(\Tr^g\big(R^{\g}(\Delta^{p-i-1}h,\nabla\Delta^{i}k)Tf \big), m^\bot \Big)\vol(g)\\
&\qquad\qquad-A\sum_{i=0}^{p-1}\int_M\g\Big(\Tr^g\big(R^{\g}(\nabla\Delta^{p-i-1}h,\Delta^{i}k)Tf\big) , m^\bot\Big)\vol(g).
\end{align*} 
} 
From this, one can read off the normal part of the adjoint. 
Thus one gets:
\begin{lem*}
The adjoint of $\nabla P$ defined in section~\ref{so:ad} for the operator $P=1+A\De^p$ is
\begin{align*}
\adj{\nabla P}(h,k)&=
2A\sum_{i=0}^{p-1}\Tr\big(g\i S g\i \g(\nabla\Delta^{p-i-1}h,\nabla\Delta^{i}k ) \big)
\\&\qquad
+A\sum_{i=0}^{p-1} \big(\nabla^*\g(\nabla\Delta^{p-i-1}h,\Delta^{i}k) \big) \Tr^g(S)\\
&\qquad+A\sum_{i=0}^{p-1}\Tr^g\big(R^{\g}(\Delta^{p-i-1}h,\nabla\Delta^{i}k)Tf \big)\\
&\qquad-A\sum_{i=0}^{p-1}\Tr^g\big(R^{\g}(\nabla\Delta^{p-i-1}h,\Delta^{i}k)Tf \big)
\\&\qquad
+Tf.\Big[\grad^g \g(Ph,k)-\big(\g(Ph,\nabla k)+\g(\nabla h,Pk)\big)^\sharp\Big].
\end{align*}
\end{lem*}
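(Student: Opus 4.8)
The plan is to determine the normal part of $\adj{\nabla P}(h,k)$ directly from the defining relation in section~\ref{so:ad}, namely
$$\int_M \g\big((\nabla_{m^\bot} P)h,k\big)\vol(g) = \int_M \g\big(m^\bot,\adj{\nabla P}(h,k)\big)\vol(g),$$
since the tangential part is already fixed by the reparametrization invariance of $P$. Because $P = 1 + A\De^p$ and the identity contributes nothing to $\nabla_m P$, the first task is to reduce everything to the first variation of a single Laplacian. Applying the Leibniz rule in the form $\nabla_{m^\bot}(\De^p) = \sum_{i=0}^{p-1}\De^i(\nabla_{m^\bot}\De)\De^{p-i-1}$ and then using that each $\De=\nabla^*\nabla$ is self-adjoint for the $H^0$-metric to shift the outer factors $\De^i$ onto $k$, I would rewrite the left-hand side as
$$A\sum_{i=0}^{p-1}\int_M \g\big((\nabla_{m^\bot}\De)\De^{p-i-1}h,\,\De^i k\big)\vol(g).$$

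Next I would substitute the variational formula for the Laplacian from section~\ref{va:la}, applied with $v = \De^{p-i-1}h$ in the slot where $\De$ is differentiated. This produces four families of terms: a Hessian term $\Tr(g\i(D_{(f,m^\bot)}g)g\i\nabla^2 v)$, a first-order term involving $\nabla^*(D_{(f,m^\bot)}g) + \tfrac12 d\Tr^g(D_{(f,m^\bot)}g)$, and two curvature terms coming from swapping covariant derivatives. The bulk of the work is then a sequence of integrations by parts whose aim is to move every derivative off $m^\bot$ (and off $D_{(f,m^\bot)}g$) and onto the $h$- and $k$-factors, so that the integrand finally takes the form $\g(m^\bot,(\cdots))\vol(g)$. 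Here I would use the trace lemma from section~\ref{no:tr} to convert each $\Tr(g\i B g\i C)$ into $g^0_2(B,C)$, the adjoint of the covariant derivative from section~\ref{no:co*} to relocate $\nabla$ and $\nabla^*$, and the curvature symmetry $\g(R^\g(X,Y)Z,U) = -\g(R^\g(Z,U)Y,X)$ from \cite[24.4.4]{MichorH} to pull $m^\bot$ out of the curvature terms. Once all derivatives have been removed from $m^\bot$, I would insert the normal variation of the metric, $D_{(f,m^\bot)}g = -2\g(m^\bot,S)$, from section~\ref{va:me}, read off the normal part of the adjoint, and add back the tangential part from section~\ref{so:ad} to obtain the stated formula.

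The step I expect to be the main obstacle is the cancellation in the Hessian and divergence terms. After the first integration by parts the Hessian contribution appears as $g^0_2\big(D_{(f,m^\bot)}g,\g(\nabla^2\De^{p-i-1}h,\De^i k)\big)$, while relocating $\nabla^*(D_{(f,m^\bot)}g)$ generates $g^0_2\big(\nabla\g(\nabla\De^{p-i-1}h,\De^i k),D_{(f,m^\bot)}g\big)$; expanding the latter by the Leibniz rule splits off a piece $g^0_2\big(\g(\nabla^2\De^{p-i-1}h,\De^i k),D_{(f,m^\bot)}g\big)$ that exactly cancels the Hessian term, leaving only the symmetric first-order piece $g^0_2\big(\g(\nabla\De^{p-i-1}h,\nabla\De^i k),D_{(f,m^\bot)}g\big)$. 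Keeping the tensor slots straight through these manipulations — remembering that the $f^*TN$-factor carried by $h$ and $k$ is never contracted by $\Tr^g$ or $g^0_2$, only the $TM$/$T^*M$ slots are — is the delicate bookkeeping that makes or breaks the computation; everything else is a mechanical application of the variational and integration-by-parts formulas already established.
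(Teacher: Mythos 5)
Your proposal is correct and takes essentially the same route as the paper's proof: Leibniz expansion of $\nabla_{m^\bot}(\De^p)$ combined with self-adjointness of $\De$, substitution of the variational formula for the Laplacian from section~\ref{va:la}, integration by parts using the trace lemma, the adjoint of $\nabla$, and the curvature symmetry, followed by insertion of $D_{(f,m^\bot)}g=-2\g(m^\bot,S)$ and the tangential part from section~\ref{so:ad}. In particular, the cancellation you single out as the main obstacle — the Hessian term $g^0_2\big(D_{(f,m^\bot)}g,\g(\nabla^2\De^{p-i-1}h,\De^i k)\big)$ against the second-derivative piece split off from $g^0_2\big(\nabla\g(\nabla\De^{p-i-1}h,\De^i k),D_{(f,m^\bot)}g\big)$ — is exactly the cancellation performed in the paper's computation.
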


\subsection{Geodesic equations and conserved momentum}\label{la:ge}

The shortest and most convenient formulation of the geodesic equation is in terms 
of the momentum $p=(1+A\De^p)f_t \otimes \vol(g)$, see sections~\ref{so:gemo} and \ref{so:geshmo}.

\begin{thm*}
The geodesic equation on $\Imm(M,N)$ for the $G^P$-metric 
with $P=1+A \Delta^p$ is given by:
$$\left\{\begin{aligned}
p &= (1+A \De^p)f_t \otimes \vol(g), \\
\nabla_{\p_t}p&=\Bigg(
A\sum_{i=0}^{p-1}\Tr\big(g\i S g\i \g(\nabla(\Delta^{p-i-1}f_t),\nabla\Delta^{i}f_t ) \big)\\&\quad
+\frac{A}{2}\sum_{i=0}^{p-1}\big(\nabla^*\g(\nabla(\Delta^{p-i-1}f_t),\Delta^{i}f_t) \big).\Tr^g(S)\\&\quad
+2A\sum_{i=0}^{p-1}\Tr^g\big(R^{\g}(\Delta^{p-i-1}f_t,\nabla\Delta^{i}f_t)Tf\big)\\&\quad
-\frac12\g(Pf_t,f_t) \Tr^g(S)
-Tf.\g(Pf_t,\nabla f_t)^\sharp\Bigg) \otimes \vol(g).
\end{aligned}\right.$$
This equation is well-posed by theorem \ref{so:we} since all 
conditions are satisfied.
For the special case of plane curves, this agrees with the geodesic equation calculated in 
\cite[section~4.2]{Michor107}.
\end{thm*}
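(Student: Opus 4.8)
The plan is to specialize the momentum form of the geodesic equation, already established for an arbitrary Sobolev-type operator in section~\ref{so:gemo}, to the concrete operator $P=1+A\De^p$. That general equation reads
\[
\nabla_{\p_t}p = \tfrac12\big(\adj{\nabla P}(f_t,f_t)^\bot-2Tf.\g(Pf_t,\nabla f_t)^\sharp-\g(Pf_t,f_t)\Tr^g(S)\big)\otimes\vol(g),
\]
so the entire content of the theorem lies in inserting the explicit expression for the normal part $\adj{\nabla P}(f_t,f_t)^\bot$. First I would record that this adjoint has already been determined for $P=1+A\De^p$ in Lemma~\ref{la:ad}; its normal part is exactly the sum of the four terms there which do \emph{not} carry the leading factor $Tf$. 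Substituting $h=k=f_t$ into that formula and multiplying by $\tfrac12$ immediately produces the mean-curvature term $A\sum_i\Tr\big(g\i Sg\i\g(\nabla\De^{p-i-1}f_t,\nabla\De^i f_t)\big)$ and the term $\tfrac{A}{2}\sum_i\big(\nabla^*\g(\nabla\De^{p-i-1}f_t,\De^i f_t)\big)\Tr^g(S)$, while the two algebraic contributions $-\tfrac12\g(Pf_t,f_t)\Tr^g(S)$ and $-Tf.\g(Pf_t,\nabla f_t)^\sharp$ are copied over from the general equation. These match the asserted formula term by term.

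The one step needing genuine care is the treatment of the two curvature sums, which at $h=k=f_t$ read
\[
A\sum_{i=0}^{p-1}\Tr^g\big(R^{\g}(\De^{p-i-1}f_t,\nabla\De^i f_t)Tf\big)-A\sum_{i=0}^{p-1}\Tr^g\big(R^{\g}(\nabla\De^{p-i-1}f_t,\De^i f_t)Tf\big).
\]
Here I would reindex the second sum by $i\mapsto p-1-i$ and apply the antisymmetry $R^{\g}(X,Y)=-R^{\g}(Y,X)$ in the first two slots (the symmetry already invoked in section~\ref{la:ad}). Under this reindexing the second sum turns into the first, so the two sums reinforce rather than cancel, and collapse into a single curvature sum with the stated coefficient. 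Tensoring the assembled expression with $\vol(g)$ then yields precisely the formula for $\nabla_{\p_t}p$, and the companion formulation on $B_i(M,N)$ follows at once from section~\ref{so:geshmo}.

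It remains to justify the well-posedness claim, i.e.\ that Assumptions~1--3 of Theorem~\ref{so:we} hold for $P=1+A\De^p$. Assumptions~2 and~3 are immediate: $\De$ is the Bochner Laplacian, elliptic of order $2$, hence $\De^p$ and $P$ are elliptic of order $2p$; $P$ is positive and symmetric with respect to the $H^0$-metric because $\De^p=(\nabla^*\nabla)^p$ is and $A>0$; and $P$ is $\Diff(M)$-invariant since it is assembled from $g=f^*\g$, which transforms equivariantly under reparametrizations. The substantive point is Assumption~1, the requirement that $P$, $\nabla P$ and $\adj{\nabla P}^\bot$ be compositions of local differential operators and linear pseudo-differential operators of total order $\le 2p$. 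For $P_fh=h+A\De^p h$ this is clear, each factor $\De$ being a second-order operator whose coefficients depend locally on $f$ through the induced metric and its Christoffel symbols; for $\nabla P$ and $\adj{\nabla P}^\bot$ it follows by inspecting the explicit formulas of Lemma~\ref{la:ad}, whose terms involve only the second fundamental form $S$, the curvature $R^{\g}$, and covariant derivatives $\nabla\De^j f_t$, all of them local operators of the required orders. The main obstacle is therefore not the final substitution but this structural verification; once it is in place, Theorem~\ref{so:we} applies verbatim and delivers local well-posedness in every Sobolev completion $\Imm^{k+2p}$ and hence in $\Imm(M,N)$.
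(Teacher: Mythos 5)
Your overall route is exactly the paper's: the theorem is meant to follow by inserting the formula of Lemma~\ref{la:ad} into the momentum form of the geodesic equation from section~\ref{so:gemo}, and the well-posedness claim is the verification of Assumptions~1--3 of Theorem~\ref{so:we}, which you sketch along the lines the paper intends. Your reading of the lemma (the normal part of $\adj{\nabla P}$ is the sum of the four terms not carrying the leading factor $Tf$), the substitution $h=k=f_t$, and the reindexing-plus-antisymmetry argument showing that the two curvature sums reinforce rather than cancel are all correct.

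There is, however, a genuine arithmetic gap at precisely the step you single out as delicate. By your own (correct) argument, inside $\adj{\nabla P}(f_t,f_t)^\bot$ the two curvature sums combine to
\[
2A\sum_{i=0}^{p-1}\Tr^g\big(R^{\g}(\De^{p-i-1}f_t,\nabla\De^{i}f_t)Tf\big),
\]
and the general equation of section~\ref{so:gemo} then multiplies \emph{all} of $\adj{\nabla P}(f_t,f_t)^\bot$ by $\tfrac12$. Applied consistently --- as you do apply it to the first two terms, where $2A\mapsto A$ and $A\mapsto \tfrac A2$ --- this yields the coefficient $A$ on the curvature sum, not the $2A$ appearing in the statement. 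So your assertion that the collapse produces ``the stated coefficient'' is exactly where a factor of two goes missing: either you must exhibit an additional source of a factor $2$ (there is none along this route), or you must conclude that the derivation gives $A\sum_{i}\Tr^g\big(R^{\g}(\De^{p-i-1}f_t,\nabla\De^{i}f_t)Tf\big)$. Note that this mismatch is internal to the paper as well: Lemma~\ref{la:ad} combined with section~\ref{so:gemo} produces coefficient $A$, while the theorem as printed says $2A$, so one of the two carries a spurious factor of $2$, and a complete proof must either locate it or prove the corrected formula. The discrepancy is invisible in the plane-curve cross-check you cite, since the curvature terms vanish in flat ambient space.
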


$P=1+A\De^p$ and consequently $G^P$ are invariant under the action of the reparametrization group
$\Diff(M)$. According to section~\ref{so:mo} one gets:
\begin{thm*}
The momentum mapping for the action of $\Diff(M)$ on $\Imm(M,N)$
$$g\Big(\big((1+A\De^p)f_t \big)^\top\Big) \otimes \vol(g)\in \Ga(T^*M\otimes_M\vol(M))$$
is constant along any geodesic $f$ in $\Imm(M,N)$.
\end{thm*}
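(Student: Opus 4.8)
The plan is to obtain this statement as an immediate specialization of the general reparametrization momentum theorem of section~\ref{so:mo}. That theorem asserts conservation of $g\bigl((Pf_t)^\top\bigr)\vol(g)$ along geodesics for \emph{any} operator $P$ satisfying the single hypothesis that $P$ be invariant under the action of $\Diff(M)$ in the sense of section~\ref{so:in}. Hence the entire proof reduces to checking this invariance for the particular choice $P=1+A\De^p$ and then substituting into the general formula.

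First I would verify that the Bochner-Laplacian $\De=\De^{f^*\g}$ is equivariant with respect to reparametrizations. The point is that $\De^{f^*\g}$ depends on the immersion $f$ only through the pullback metric $g=f^*\g$ and the connection on $f^*TN$ induced from it, as is manifest from the defining formula $\De B=\nabla^*\nabla B=-\Tr^g(\nabla^2 B)$ of section~\ref{no:la}. Under $f\mapsto f\circ\ph$ for $\ph\in\Diff(M)$ the pullback metric transforms naturally, $(f\circ\ph)^*\g=\ph^*(f^*\g)$, and every ingredient entering $\De$---the Levi-Civita connection $\nabla^g$, the metric trace $\Tr^g$, and the adjoint $\nabla^*$ built from $\vol(g)$ in section~\ref{no:co*}---is a natural construction out of this metric. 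Consequently $\De^{(f\circ\ph)^*\g}(h\circ\ph)=(\De^{f^*\g}h)\circ\ph$. Iterating $p$ times gives the same relation for $\De^p$, whence $P_f(h)\circ\ph=P_{f\circ\ph}(h\circ\ph)$, which is exactly the invariance condition recorded in section~\ref{so:in}.

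With the invariance in hand, the general theorem of section~\ref{so:mo} applies without change and yields that $g\bigl((Pf_t)^\top\bigr)\vol(g)$ is constant along any geodesic $f$ in $\Imm(M,N)$; inserting $P=1+A\De^p$ produces the asserted expression. I expect no substantive obstacle here: the conceptual work---deriving conservation from the $\Diff(M)$-symmetry of $G^P$---was already carried out in section~\ref{so:mo}, so the present statement is purely its specialization. The only step demanding any care is the naturality of the Bochner-Laplacian, and even that is routine once one observes that $P$ is assembled functorially from $g=f^*\g$ alone.
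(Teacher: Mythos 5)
Your proposal is correct and follows essentially the same route as the paper: the paper likewise asserts the $\Diff(M)$-invariance of $P=1+A\De^p$ (which holds because $\De^{f^*\g}$ is built naturally from the pullback metric and connections) and then invokes the general conservation theorem of section~\ref{so:mo}. Your added verification of the equivariance of the Bochner-Laplacian is a correct elaboration of a step the paper leaves implicit.
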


The horizontal geodesic equation for a general metric on $\Imm$ has been derived in 
section~\ref{sh:geshmo}. In section~\ref{so:geshmo} it has been shown that this equation 
takes a very simple form. Now it is possible to write down this equation specifically for 
the operator $P=1+A\De^p$:
\begin{thm*}
The geodesic equation on shape space for the Sobolev-metric $G^P$ with $P=1+A\De^p$ is equivalent 
to the set of equations
\begin{equation*}
\left\{\begin{aligned}
p &= Pf_t \otimes \vol(g), \qquad Pf_t = (Pf_t)^\bot, \\
(\nabla_{\p_t}p)^\hor &= 
\Bigg(A\sum_{i=0}^{p-1}\Tr\big(g\i S g\i \g(\nabla\Delta^{p-i-1}f_t,\nabla\Delta^{i}f_t ) \big)
\\&\qquad+ \frac{A}{2}\sum_{i=0}^{p-1} \big(\nabla^*\g(\nabla\Delta^{p-i-1}f_t,\Delta^{i}f_t) \big) \Tr^g(S)
\\&\qquad+2A\sum_{i=0}^{p-1}\Tr^g\big(R^{\g}(\Delta^{p-i-1}f_t,\nabla\Delta^{i}f_t)Tf\big)
\\&\qquad-\frac12\g(Pf_t,f_t).\Tr^g(S) \Bigg) \otimes \vol(g),
\end{aligned}\right.
\end{equation*}
where $f$ is a path of immersions. 
For the special case of plane curves, this agrees with the geodesic equation calculated in 
\cite[section~4.6]{Michor107}.
\end{thm*}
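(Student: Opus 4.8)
The plan is to derive this theorem as a direct specialization: we feed the explicit formula for $\adj{\nabla P}$ computed in section~\ref{la:ad} into the general momentum form of the geodesic equation on shape space established in section~\ref{so:geshmo}. That general result states that for any Sobolev-type metric,
\[
(\nabla_{\p_t}p)^\hor = \tfrac12\big(\adj{\nabla P}(f_t,f_t)^\bot - \g(Pf_t,f_t)\Tr^g(S)\big)\otimes\vol(g),
\]
together with $p = Pf_t\otimes\vol(g)$ and the horizontality constraint $Pf_t=(Pf_t)^\bot$. The first line of the asserted equation is thus carried over verbatim, and the whole problem reduces to substituting the Lemma of section~\ref{la:ad} with $h=k=f_t$ and extracting the normal part; recall that for a momentum $\alpha\otimes\vol(g)$ the horizontal projection is exactly $\alpha^\bot\otimes\vol(g)$, since $\alpha$ is horizontal precisely when it is $\g$-normal to $f$.

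First I would project term by term. The last line of $\adj{\nabla P}$, namely $Tf.\big[\grad^g\g(Ph,k)-(\g(Ph,\nabla k)+\g(\nabla h,Pk))^\sharp\big]$, lies in the image of $Tf$ and is therefore purely tangential, so it is annihilated by $(\;\cdot\;)^\bot$. The term $2A\sum_i\Tr(g\i S g\i\,\g(\nabla\Delta^{p-i-1}f_t,\nabla\Delta^i f_t))$ is a contraction of the $\Nor(f)$-valued second fundamental form $S$ against a scalar-valued covariant $2$-tensor and hence is already normal; similarly the term proportional to $\Tr^g(S)$ is a scalar multiple of the normal mean curvature vector. Both survive the projection unchanged and, after the global factor $\tfrac12$, reproduce the first two lines of the claimed right-hand side, while the leftover $-\tfrac12\g(Pf_t,f_t)\Tr^g(S)$ supplies the final line.

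The only step that is not purely formal is the handling of the two curvature contributions
\[
+A\sum_{i=0}^{p-1}\Tr^g\big(R^{\g}(\Delta^{p-i-1}f_t,\nabla\Delta^i f_t)Tf\big)
-A\sum_{i=0}^{p-1}\Tr^g\big(R^{\g}(\nabla\Delta^{p-i-1}f_t,\Delta^i f_t)Tf\big).
\]
Here I would reindex the second sum by $i\mapsto p-1-i$ and then invoke the antisymmetry $R^{\g}(X,Y)=-R^{\g}(Y,X)$ in the first pair of arguments; this converts the second sum into a copy of the first, so the two merge into a single sum whose coefficient is the combination of both. Taking the normal part of this curvature expression and applying the factor $\tfrac12$ then yields the curvature line of the theorem.

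The main obstacle I anticipate is precisely this curvature bookkeeping: one must keep careful track of which argument of $R^{\g}$ carries the extra covariant derivative after the reindexing, and check that the resulting merge is consistent with the curvature symmetry $\g(R^{\g}(X,Y)Z,U)=-\g(R^{\g}(Z,U)Y,X)$ already exploited in section~\ref{la:ad}, so that the coefficient and the sign come out correctly and no spurious tangential piece is produced. Everything else is an assembly of formulas proved earlier, and well-posedness of the resulting geodesic equation is inherited directly from Theorem~\ref{so:we}, exactly as in the corresponding equation on $\Imm(M,N)$.
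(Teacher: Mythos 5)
Your strategy is exactly the paper's: the theorem in section~\ref{la:ge} carries no separate proof and is presented precisely as the specialization you describe, namely substituting the Lemma of section~\ref{la:ad} with $h=k=f_t$ into the momentum form of the geodesic equation on shape space from section~\ref{so:geshmo}. Your term-by-term projection is also right: the $Tf.[\cdots]$ part of the adjoint is tangential and is annihilated, the $S$-contraction term and the terms proportional to $\Tr^g(S)$ are normal and survive, and after the overall factor $\tfrac{1}{2}$ they produce the coefficients $A$, $\tfrac{A}{2}$ and $-\tfrac{1}{2}$ appearing in the statement.

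The gap is in the curvature bookkeeping, the very step you flag as the main obstacle and then declare to come out correctly. Your merging argument is fine as far as it goes: reindexing $i\mapsto p-1-i$ and using $R^{\g}(X,Y)=-R^{\g}(Y,X)$ turns
\begin{equation*}
-A\sum_{i=0}^{p-1}\Tr^g\big(R^{\g}(\nabla\De^{p-i-1}f_t,\De^{i}f_t)Tf\big)
\quad\text{into}\quad
+A\sum_{i=0}^{p-1}\Tr^g\big(R^{\g}(\De^{p-i-1}f_t,\nabla\De^{i}f_t)Tf\big),
\end{equation*}
so the two curvature sums in $\adj{\nabla P}(f_t,f_t)^\bot$ merge into a single sum with coefficient $2A$. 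But the equation of section~\ref{so:geshmo} then multiplies the entire adjoint by $\tfrac{1}{2}$, so the derivation you describe produces the curvature term with coefficient $A$ --- not the $2A$ printed in the statement. The same factor-of-two tension exists between the companion theorem on $\Imm$ in section~\ref{la:ge} and the equation of section~\ref{so:gemo}, so the paper's two theorems are mutually consistent, but neither is what falls out of Lemma~\ref{la:ad} combined with the general momentum equation. Your proposal asserts that the computation ``yields the curvature line of the theorem''; carried out as written, it yields half of it. To prove the statement as printed you would need a second factor of $2$, and there is none available in the paper's formulas: the honest conclusion is that either the substitution (yours and the paper's implicit one) gives coefficient $A$ and the printed $2A$ is an error, or Lemma~\ref{la:ad} itself misstates the curvature terms by a factor of $2$. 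Either way this discrepancy has to be confronted explicitly rather than asserted away, since it is the only point where your derivation and the claimed formula disagree.
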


\section{Surfaces in $n$-space}\label{su}
 
This section is about the special case where the ambient space $N$ is $\R^n$. 
The flatness of $\R^n$ leads to a simplification of the geodesic equation, and the 
Euclidean motion group acting on $\R^n$ induces additional conserved quantities. 
The vector space structure of $\R^n$ allows to define a 
Fr\'echet metric. 
This metric will be compared to Sobolev metrics. 
Finally in section~\ref{su:co} the space of concentric hyper-spheres in $\R^n$
is briefly investigated.

\subsection{Geodesic equation}\label{su:ge}

The covariant derivative $\nabla^{\g}$ on $\R^n$ is but the usual derivative.
Therefore the covariant derivatives $\nabla_{\p_t} f_t$ and $\nabla_{\p_t}p$ in the geodesic 
equation can be replaced by $f_{tt}$ and $p_t$, respectively.
(Note that $\Imm(M,\R^n)$ is an open subset of the Fr\'echet vector space 
$C^\infty(M,\R^n)$.)
Also, the curvature terms disappear because $\R^n$ is flat.
Any of the formulations of the geodesic equation presented so far can thus be
adapted to the case $N=\R^n$. 

We want to show how the geodesic equation simplifies further under the additional assumptions 
that $\dim(M)=\dim(N)-1$ and that $M$ is orientable. Then it is possible define a unit
vector field $\nu$ to $M$. The condition that $f_t$ is horizontal then simplifies 
to $Pf_t = a.\nu$ for $a \in C^\infty(M)$. 
The geodesic equation can then be written as an equation for $a$. 
However, the equation is slightly simpler 
when it is written as an equation for $a.\vol(g)$. 
In practise, $\vol(g)$ can be treated as a function on $M$ because
one can identify $\vol(g)$ with its density with respect to $du^1 \wedge \ldots \wedge du^{n-1}$, 
where $(u^1, \ldots, u^{n-1})$ is a chart on $M$. Thus multiplication by $\vol(g)$ 
does not pose a problem. 

\begin{thm*}
The geodesic equation for a Sobolev-type metric $G^P$ on shape space 
$B_i(M,\R^n)$ with $\dim(M)=n-1$
is equivalent to the set of equations
\begin{equation*}
\left\{\begin{aligned}
Pf_t  &= a.\nu \\
\p_t\big(a.\vol(g)\big) &= \frac12 \g\big(\adj{\nabla P}(f_t,f_t),\nu\big) 
-\frac12 \g(Pf_t,f_t) \g\big(\Tr^g(S),\nu\big),
\end{aligned}\right.
\end{equation*}
where $f$ is a path in $\Imm(M,\R^n)$ and $a$ is a time-dependent function on $M$. 
\end{thm*}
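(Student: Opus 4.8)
The plan is to specialize the momentum form of the shape-space geodesic equation from section~\ref{so:geshmo} to the flat, codimension-one setting. That theorem asserts, for a general ambient $N$, the equivalence of the geodesic equation on $B_i$ with the system
\begin{equation*}
p = Pf_t\otimes\vol(g),\quad Pf_t = (Pf_t)^\bot,\qquad
(\nabla_{\p_t}p)^\hor = \tfrac12\big(\adj{\nabla P}(f_t,f_t)^\bot-\g(Pf_t,f_t)\Tr^g(S)\big)\otimes\vol(g).
\end{equation*}
Since every step below is a rewriting, equivalence is preserved throughout. First I would invoke the flatness of $N=\R^n$: as noted above, $\nabla^{\g}$ is the ordinary derivative, so $\nabla_{\p_t}p$ becomes $\p_t p$ (and any curvature contributions hidden in $\adj{\nabla P}$, e.g.\ those made explicit in section~\ref{la:ad}, drop out). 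Next, because $\dim(M)=n-1$ and $M$ is orientable, the normal bundle $\Nor(f)$ is a real line bundle trivialized by the unit normal field $\nu$; hence the horizontality condition $Pf_t=(Pf_t)^\bot$ is exactly $Pf_t=a\,\nu$ with $a:=\g(Pf_t,\nu)$, which is the first asserted equation.

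The core computation is to evaluate $(\p_t p)^\hor$ for $p=a\,\nu\otimes\vol(g)$. The product rule gives
\begin{equation*}
\p_t p = (\p_t a)\,\nu\otimes\vol(g) + a\,(\p_t\nu)\otimes\vol(g) + a\,\nu\otimes\p_t\vol(g).
\end{equation*}
Differentiating the identity $\g(\nu,\nu)=1$ shows $\p_t\nu\perp\nu$, so $\p_t\nu$ is tangential to $f$ and is annihilated by the horizontal projection, which for a momentum $v\otimes\vol(g)$ retains only the normal factor $v^\bot\otimes\vol(g)$ by the splitting in section~\ref{so:geshmo}. The remaining two terms then combine into $(\p_t p)^\hor = \nu\otimes\big((\p_t a)\vol(g)+a\,\p_t\vol(g)\big)=\nu\otimes\p_t(a\,\vol(g))$, where $\p_t\vol(g)=D_{(f,f_t)}\vol(g)$ is the volume variation of section~\ref{va:vo}. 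This collapse is precisely what motivates writing the equation in terms of $a\,\vol(g)$ rather than $a$ alone.

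Finally I would treat the right-hand side. Both $\adj{\nabla P}(f_t,f_t)^\bot$ and $\Tr^g(S)$ are sections of the line bundle $\Nor(f)$, hence multiples of $\nu$, namely $\g(\adj{\nabla P}(f_t,f_t),\nu)\,\nu$ (using $\g(v,\nu)=\g(v^\bot,\nu)$) and $\g(\Tr^g(S),\nu)\,\nu$. Thus both sides of the momentum equation lie in the line spanned by $\nu\otimes\vol(g)$ inside $\Ga(\Nor(f)\otimes\Vol(M))$, and equating the $\nu$-coefficients yields
\begin{equation*}
\p_t(a\,\vol(g)) = \tfrac12\g(\adj{\nabla P}(f_t,f_t),\nu)\,\vol(g) - \tfrac12\g(Pf_t,f_t)\,\g(\Tr^g(S),\nu)\,\vol(g),
\end{equation*}
which is the asserted second equation once $\vol(g)$ is identified with its density function in a chart as explained preceding the theorem. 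I expect the only delicate points to be the verification that $\p_t\nu$ is tangential and the careful bookkeeping of the volume-density factors; the rest is a direct specialization of section~\ref{so:geshmo}.
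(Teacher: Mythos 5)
Your proposal is correct and follows essentially the same route as the paper: both specialize the momentum form of the geodesic equation from section~\ref{so:geshmo} to the flat codimension-one case, write $Pf_t=a\,\nu$, and use that $\g(\p_t\nu,\nu)=0$ (so the $\p_t\nu$ term drops out) to collapse the horizontal momentum equation onto the line spanned by $\nu\otimes\vol(g)$. The only cosmetic difference is that the paper differentiates the contraction $\g(Pf_t\otimes\vol(g),\nu)$ directly, whereas you expand $\p_t\bigl(a\,\nu\otimes\vol(g)\bigr)$ and then project; the algebra is identical.
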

\begin{proof}
Applying $\g(\cdot,\nu)$ to the geodesic equation \ref{so:geshmo} 
on shape space in terms of the momentum one gets
\begin{align*}
\p_t\big(a.\vol(g)\big) &= 
\p_t\ \g\big(Pf_t \otimes \vol(g),\nu\big) \\&=
\g\Big(\nabla_{\p_t}\big(Pf_t \otimes \vol(g)\big),\nu\Big) 
+ \g\big(Pf_t \otimes \vol(g),\nabla_{\p_t} \nu\big)  \\&=
\frac12 \g\big(\adj{\nabla P}(f_t,f_t),\nu\big) -\frac12 \g(Pf_t,f_t) \g\big(\Tr^g(S),\nu\big)+ 0.
\qedhere
\end{align*}
\end{proof}
Let us spell this equation out in even more details for the $H^1$-metric.
This is the case of interest for the numerical examples in section~\ref{nu}. 
\begin{thm*}
The geodesic equation on shape space $B_i(M,\R^n)$ 
for the Sobolev-metric $G^P$ with $P=1+A\De$ is equivalent 
to the set of equations
\begin{equation*}
\left\{\begin{aligned}
Pf_t &=  a.\nu \\
\p_t \big(a.\vol(g)\big) &= 
\Big(A g^0_2\big(s, \g(\nabla f_t,\nabla f_t ) \big) 
-\frac{\Tr(L)}{2} \big(\norm{f_t}_{\g}^2 + A \norm{\nabla f_t}_{g^0_1\otimes\g}^2 \big) 
\Big) \vol(g),
\end{aligned}\right.
\end{equation*}
where $f$ is a path of immersions, 
$a$ is a time-dependent function on $M$, 
$s=\g(S,\nu) \in \Ga(T^0_2 M)$ is the shape operator,
$L=g\i s \in \Ga(T^1_1 M)$ is the Weingarten mapping, 
and $\Tr(L)$ is the mean curvature.
\end{thm*}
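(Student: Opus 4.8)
The plan is to specialize the hypersurface geodesic equation from the previous theorem, namely
$$\p_t\big(a.\vol(g)\big) = \tfrac12 \g\big(\adj{\nabla P}(f_t,f_t),\nu\big) - \tfrac12 \g(Pf_t,f_t)\,\g\big(\Tr^g(S),\nu\big),$$
to the concrete operator $P=1+A\De$. First I would substitute $p=1$ into the formula for $\adj{\nabla P}$ from section~\ref{la:ad}: all the sums $\sum_{i=0}^{p-1}$ collapse to the single term $i=0$, and since $N=\R^n$ is flat the two curvature terms involving $R^{\g}$ vanish identically. This leaves
$$\adj{\nabla P}(f_t,f_t) = 2A\,\Tr\big(g\i S g\i \g(\nabla f_t,\nabla f_t)\big) + A\big(\nabla^*\g(\nabla f_t,f_t)\big)\Tr^g(S) + Tf.[\,\cdots\,],$$
where the bracketed tangential term need not be computed.

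Next I would apply $\g(\cdot,\nu)$ and invoke the hypersurface assumption $\dim(M)=n-1$: the normal bundle $\Nor(f)$ is a line spanned by $\nu$, so that $S=s\otimes\nu$ with $s=\g(S,\nu)$, and the tangential term $Tf.[\cdots]$ is annihilated. This identifies $\g(\Tr^g(S),\nu)=\Tr^g(s)=\Tr(L)$, the mean curvature, and turns the normal-valued trace into the scalar $\Tr\big(g\i s g\i \g(\nabla f_t,\nabla f_t)\big)$. Since both $s$ and $\g(\nabla f_t,\nabla f_t)$ are symmetric, the lemma of section~\ref{no:tr} rewrites this as $g^0_2\big(s,\g(\nabla f_t,\nabla f_t)\big)$; after the factor $\tfrac12$ this produces exactly the first term $A\,g^0_2\big(s,\g(\nabla f_t,\nabla f_t)\big)$ of the claimed equation.

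The part requiring care is the collection of the $\Tr(L)$-terms. Expanding $\g(Pf_t,f_t)=\norm{f_t}_{\g}^2+A\,\g(\De f_t,f_t)$, I would establish the pointwise Leibniz identity
$$\g(\De f_t,f_t)=\norm{\nabla f_t}_{g^0_1\otimes\g}^2+\nabla^*\g(\nabla f_t,f_t),$$
which follows by writing $\De=\nabla^*\nabla$ and expanding $\nabla^*$ of the one-form $\g(\nabla f_t,f_t)$ in a $g$-orthonormal frame, using that $\nabla$ respects $\g$. The main obstacle is recognizing that this identity is what is needed: the target equation carries $\norm{\nabla f_t}^2$ rather than $\g(\De f_t,f_t)$, so without it the formula would not match. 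Substituting it, the term $+\tfrac{A}{2}\big(\nabla^*\g(\nabla f_t,f_t)\big)\Tr(L)$ coming from the adjoint cancels precisely the term $-\tfrac{A}{2}\big(\nabla^*\g(\nabla f_t,f_t)\big)\Tr(L)$ arising from $\g(Pf_t,f_t)$. Collecting the survivors yields
$$\p_t\big(a.\vol(g)\big)=\Big(A\,g^0_2\big(s,\g(\nabla f_t,\nabla f_t)\big)-\tfrac{\Tr(L)}{2}\big(\norm{f_t}_{\g}^2+A\norm{\nabla f_t}_{g^0_1\otimes\g}^2\big)\Big)\vol(g),$$
as asserted, while the horizontality condition $Pf_t=a.\nu$ is inherited unchanged from the previous theorem.
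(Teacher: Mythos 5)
Your proposal is correct and follows essentially the same route as the paper: the paper applies $\g(\cdot,\nu)$ to the equation of section~\ref{la:ge} (which is just the general hypersurface equation with the adjoint of section~\ref{la:ad} already substituted, specialized to $p=1$ and flat $\R^n$), and then cancels the second-order terms exactly as you do. The only cosmetic difference is that the paper expands both $\nabla^*\g(\nabla f_t,f_t)$ and $\g(\De f_t,f_t)$ into $\Tr^g\big(\g(\nabla^2 f_t,f_t)\big)$ and cancels those, whereas you package the same cancellation in the Leibniz identity $\g(\De f_t,f_t)=\norm{\nabla f_t}_{g^0_1\otimes\g}^2+\nabla^*\g(\nabla f_t,f_t)$.
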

\begin{proof}
The fastest way to get to this equation is to apply $\g(\cdot,\nu)$ to
the geodesic equation on $\Imm$ from section~\ref{la:ge}.
This yields
\begin{align*}
\p_t \big(a.\vol(g)\big) &= 
\Big(A \Tr\big(g\i.s.g\i \g(\nabla f_t,\nabla f_t ) \big)
+ \frac{A}{2} \big(\nabla^*\g(\nabla f_t,f_t) \big) \Tr(L)\\&\qquad
-\frac12\g(Pf_t,f_t).\Tr(L) \Big) \vol(g)\\&=
\Big(A g^0_2\big(s, \g(\nabla f_t,\nabla f_t ) \big)
- \frac{A}{2} \Tr^g\big(\g(\nabla^2 f_t,f_t) \big) \Tr(L)\\&\qquad
- \frac{A}{2} \Tr^g\big(\g(\nabla f_t,\nabla f_t) \big) \Tr(L)
-\frac12\g\big((1+A\De)f_t,f_t\big)\Tr(L) \Big) \vol(g)\\&=
\Big(A g^0_2\big(s, \g(\nabla f_t,\nabla f_t ) \big)
- \frac{A}{2} \Tr^g\big(\g(\nabla f_t,\nabla f_t) \big) \Tr(L)\\&\qquad
-\frac12\g\big(f_t,f_t\big).\Tr(L) \Big) \vol(g).
\end{align*}
Notice that the second order derivatives of $f_t$ have canceled out.
\end{proof}

\subsection{Additional conserved momenta}
 
If $P$ is invariant under the action of the Euclidean motion group $\R^n\rtimes\on{SO}(n)$, 
then also the metric $G^P$ is in invariant under this group action 
and one gets additional conserved quantities as described in \cite[section~2.5]{Michor107}:

\begin{thm*}
For an operator $P$ that is invariant under the action of the 
Euclidean motion group $\R^n\rtimes\on{SO}(n)$, 
the linear momentum
$$ \int_M  Pf_t \vol(g)\in(\R^n)^* $$
and the angular momentum
\begin{align*}
\forall X\in \mathfrak{so}(n): \int_M \g( X.f,Pf_t ) \vol(g) \\
\text{or equivalently } \int_M (f\wedge Pf_t ) 
\vol(g)\in\textstyle{\bigwedge^2}\R^n\cong \mathfrak{so}(n)^* 
\end{align*}
are constant along any geodesic $f$ in $\Imm(M,\R^n)$.
The operator $P=1+A \De^p$ satisfies this property. 
\end{thm*}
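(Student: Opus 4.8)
The plan is to obtain both conservation laws as instances of the general principle that a Riemannian metric invariant under a group of isometries has an associated conserved momentum along geodesics, exactly the framework of \cite[section~2.5]{Michor107} being invoked here. First I would record the group action: the Euclidean motion group $\R^n\rtimes\on{SO}(n)$ acts on $N=\R^n$ by isometries of $\g$, and this lifts to $\Imm(M,\R^n)$ by post-composition, $f\mapsto \si\o f$ for $\si(x)=Rx+v$. The tangent action sends $h\in T_f\Imm=C^\infty(M,\R^n)$ to $R.h$, since $R$ is the derivative of $\si$ and the translation part acts trivially on tangent vectors. The structural point is that invariance of $P$ forces invariance of $G^P$: because $\si$ is an isometry one has $(\si\o f)^*\g=f^*\g$, so the induced metric $g$, its volume density $\vol(g)$ and the operator $P$ are all unchanged, and together with orthogonality of $R$ this gives $G^P_{\si\o f}(R.h,R.k)=G^P_f(h,k)$, i.e. $(\si)^*G^P=G^P$.

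With the metric invariant, the cited result applies: for every fundamental vector field $\ze$ of the action, the function $f\mapsto G^P_f(\ze,f_t)$ is constant along any geodesic. It then remains to identify the fundamental vector fields and evaluate the pairing. For a translation in direction $w\in\R^n$ the fundamental field at $f$ is the constant section $\ze_f=w$; for a rotation generated by $X\in\mathfrak{so}(n)$ it is $\ze_f=X.f$. Using that $P_f$ is symmetric with respect to the $H^0$-metric I would move $P$ onto $f_t$, obtaining
\[
G^P_f(w,f_t)=\int_M \g(w,P_f f_t)\,\vol(g)=\Big\langle w,\int_M P f_t\,\vol(g)\Big\rangle,\qquad
G^P_f(X.f,f_t)=\int_M \g(X.f,P_f f_t)\,\vol(g),
\]
which are precisely the claimed linear and angular momenta. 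The equivalent bivector form follows from the identification $\mathfrak{so}(n)\cong\bigwedge^2\R^n$, under which $\g(X.f,Pf_t)=\langle X,f\wedge Pf_t\rangle$, so the pairing against $X$ of $\int_M (f\wedge Pf_t)\,\vol(g)$ recovers the angular momentum.

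Finally I would verify that $P=1+A\De^p$ satisfies the invariance hypothesis. The two ingredients are that $(\si\o f)^*\g=f^*\g$, so $\De=\De^{f^*\g}$ is literally the same operator before and after applying $\si$, and that the constant linear map $R$ commutes with $\De$, which in the flat case acts componentwise on sections of $f^*T\R^n=M\x\R^n$; together these give $P_{\si\o f}(R.h)=R.(P_f h)$, i.e. invariance of $P$. I expect the only delicate points to be bookkeeping rather than conceptual: that no boundary terms obstruct the $H^0$-symmetry of $P$ or the conservation law (fine, since $M$ is compact without boundary), and that the momenta are read in the smooth dual into which $G^P(f_t,\cdot)$ always lands. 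Alternatively, the same conservation could be checked directly by integrating the momentum form of the geodesic equation (section~\ref{so:gemo}) over $M$ and using the divergence theorem, but the Noether route is cleaner and isolates the essential content, namely the invariance verification for $1+A\De^p$.
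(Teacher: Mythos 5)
Your proposal is correct and follows essentially the same route as the paper: the paper also deduces invariance of $G^P$ from invariance of $P$ (together with the isometric action of $\R^n\rtimes\on{SO}(n)$ on $(\R^n,\g)$) and then simply invokes the momentum-mapping/Noether framework of \cite[section~2.5]{Michor107}, giving no further proof. Your write-up merely fills in the details that the paper delegates to that citation — the fundamental vector fields, the evaluation of the pairings, and the invariance check for $P=1+A\De^p$ — all of which are carried out correctly.
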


\subsection{Fr\'echet distance and Finsler metric}\label{su:fr}

The Fr\'echet distance on shape space $B_i(M,\R^n)$ is defined as
\begin{align*}
\dist_\infty^{B_i}(F_0,F_1) = \inf_{f_0,f_1} \norm{f_0 - f_1}_{L^\infty},
\end{align*}
where the infimum is taken over all $f_0, f_1$ with $\pi(f_0)=F_0, \pi(f_1)=F_1$.
As before, $\pi$ denotes the projection $\pi:\Imm \to B_i$. 
Fixing $f_0$ and $f_1$, one has
\begin{align*}
\dist_\infty^{B_i}\big(\pi(f_0),\pi(f_1)\big) = \inf_{\varphi} \norm{f_0 \o \varphi - f_1}_{L^\infty},
\end{align*}
where the infimum is taken over all $\varphi \in \on{Diff}(M)$.
The Fr\'echet distance is related to the Finsler metric
\begin{align*}
G^\infty : T \Imm(M,\R^n) \rightarrow \R, \qquad h \mapsto \norm{h^\bot}_{L^\infty}.
\end{align*}

\begin{lem*}
The pathlength distance induced by the Finsler metric $G^\infty$ 
provides an upper bound for the Fr\'echet distance:
\begin{align*}
\dist_\infty^{B_i}(F_0,F_1) \leq \dist_{G^\infty}^{B_i}(F_0,F_1) = \inf_f \int_0^1 \norm{f_t}_{G^\infty} dt,
\end{align*}
where the infimum is taken over all paths 
$$f:[0,1] \to \Imm(M,\R^n) \quad \text{with} \quad \pi(f(0))=F_0, \pi(f(1))=F_1.$$
\end{lem*}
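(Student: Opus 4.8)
The goal is to bound the Fréchet distance on shape space from above by the pathlength distance of the Finsler metric $G^\infty(h) = \norm{h^\bot}_{L^\infty}$. The plan is to take an arbitrary path $f:[0,1]\to\Imm(M,\R^n)$ connecting representatives $f_0$ of $F_0$ and $f_1$ of $F_1$, and to show that the endpoints satisfy
\[
\norm{f_0 - f_1}_{L^\infty} \leq \int_0^1 \norm{f_t}_{G^\infty}\, dt = \int_0^1 \norm{f_t^\bot}_{L^\infty}\, dt.
\]
Taking the infimum over all such paths on the right, and then the infimum over representatives on the left, yields the claimed inequality $\dist_\infty^{B_i}(F_0,F_1) \leq \dist_{G^\infty}^{B_i}(F_0,F_1)$.

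**The main obstacle and how to handle it.** The naive estimate via the fundamental theorem of calculus only gives $\norm{f_0-f_1}_{L^\infty}\le \int_0^1\norm{f_t}_{L^\infty}dt$, which controls the \emph{full} velocity $f_t$, not just its normal part $f_t^\bot$. The tangential part of $f_t$ corresponds precisely to motion along the $\Diff(M)$-orbit, i.e.\ to reparametrization, which is invisible in shape space. This is the crux: I must absorb the tangential motion by reparametrizing the path. The idea is to replace the given path $f$ by a reparametrized path $\tilde f(t,x) = f(t,\ph(t,x))$ for a suitable time-dependent $\ph\in\Diff(M)$ with $\ph(0,\cdot)=\Id_M$, chosen so that the velocity $\p_t\tilde f$ becomes purely normal. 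Concretely, $\ph$ should be the flow of the time-dependent vector field $\xi = -f_t^\top\circ\ph$ on $M$ (the Moser-type construction), so that $\p_t\tilde f = f_t^\bot\circ\ph$ is tangent to the normal bundle. Since $\ph(t,\cdot)$ is a diffeomorphism, $\tilde f(t,\cdot)$ and $f(t,\cdot)$ represent the same point in $B_i$ for every $t$; in particular $\pi(\tilde f(0))=F_0$ and $\pi(\tilde f(1))=F_1$.

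**Carrying out the estimate.** With this reparametrized path in hand, $\tilde f_0 = f_0$ and $\tilde f_1 = f_1\circ\ph(1,\cdot)$ are again representatives of $F_0$ and $F_1$. Applying the fundamental theorem of calculus pointwise and then the triangle inequality in $L^\infty$ gives
\[
\norm{\tilde f_1 - \tilde f_0}_{L^\infty}
= \Bnorm{\int_0^1 \p_t\tilde f\, dt}_{L^\infty}
\leq \int_0^1 \norm{\p_t\tilde f}_{L^\infty}\, dt
= \int_0^1 \norm{f_t^\bot\circ\ph}_{L^\infty}\, dt
= \int_0^1 \norm{f_t^\bot}_{L^\infty}\, dt,
\]
where the last equality uses that precomposition with a diffeomorphism preserves the sup-norm. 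Since $\tilde f_0,\tilde f_1$ are admissible representatives, the left side bounds $\dist_\infty^{B_i}(F_0,F_1)$ from above by $\int_0^1\norm{f_t^\bot}_{L^\infty}dt = \int_0^1\norm{f_t}_{G^\infty}dt$. Taking the infimum over all paths $f$ then gives the result.

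**Remaining care.** The one technical point to verify is that the flow $\ph$ exists on the whole interval $[0,1]$; this follows from compactness of $M$ and smoothness of $f$, so the time-dependent vector field $f_t^\top$ is complete. I expect the reparametrization step — justifying that the tangential velocity can be flowed away without increasing the sup-norm of the normal velocity — to be the only substantive point; once that is in place, the estimate itself is the elementary integral bound above.
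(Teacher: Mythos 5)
Your proof is correct and follows essentially the same route as the paper: reparametrize the path by the flow of $-f_t^\top$ so that the velocity becomes purely normal, then apply the fundamental theorem of calculus and the triangle inequality in $L^\infty$, using that precomposition with a diffeomorphism preserves the sup-norm. The paper merely asserts the reparametrization step ("any path can be reparametrized such that $f_t$ is normal"), whereas you carry out the Moser-trick construction explicitly — a harmless elaboration, not a different argument.
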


\begin{proof}
Since any path $f$ can be reparametrized such that $f_t$ is normal to $f$, one has
$$\inf_f \int_0^1 \norm{f_t^\bot}_{L^\infty} dt = \inf_f \int_0^1 \norm{f_t}_{L^\infty} dt, $$
where the infimum is taken over the same class of paths $f$ as described above. Therefore
\begin{align*}
\dist_\infty^{B_i}(F_0,F_1) &= \inf_f \norm{f(1)-f(0)}_{L^\infty} 
= \inf_f \norm{ \int_0^1 f_t dt}_{L^\infty} 
\leq \inf_f \int_0^1 \norm{f_t}_{L^\infty} dt \\ &
= \inf_f \int_0^1 \norm{f_t^\bot}_{L^\infty} dt
= \dist_{G^\infty}^{B_i}(F_0,F_1). \qedhere
\end{align*}
\end{proof}

It is claimed in \cite[theorem~13]{MennucciYezzi2008} that $d_\infty=\dist_{G^\infty}$. However, 
the proof given there only works on the vector space $C^\infty(M,\R^n)$ and not on $B_i(M,\R^n)$. 
The reason is that convex combinations of immersions are used in the proof, 
but that the space of immersions is not convex.

\subsection{Sobolev versus Fr\'echet distance}\label{su:fr2}

It is a desirable property of any distance on shape space to be stronger than the Fr\'echet
distance. Otherwise, singular points of a shape could move arbitrarily far away
without increasing the distance much. 

As the following result shows, Sobolev metrics of low order do not have this property. 
The authors believe that this property is true
when the order of the metric is high enough, but were not able to prove this.

\begin{lem*}
Let $G^P$ be a metric on $B_i(M,\R^n)$ that is weaker than or at least as weak as a Sobolev $H^p$-metric with 
$p < \frac{\dim(M)}2+1$, i.e.
\begin{align*}
\norm{h}_{G^P} \leq C \norm{h}_{H^p} = C \sqrt{\int_M \g\big( (1+\Delta^p) h,h \big) \vol(g)} 
\qquad \text{for all $h \in T\Imm$.}
\end{align*}
Then the Fr\'echet distance can not be bounded by the $G^P$-distance. 
\end{lem*}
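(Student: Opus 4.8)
The plan is to reduce to the borderline case and then exhibit an explicit family of \emph{thin finger} deformations whose $G^P$-length collapses while their Fréchet distance stays bounded away from zero. First I would observe that it suffices to treat the strongest admissible metric, namely $G^P=H^p$ itself. Indeed, from $\norm{h}_{G^P}\le C\norm{h}_{H^p}$ one gets $L_{G^P}(f)\le C\,L_{H^p}(f)$ for every path, hence $\dist^{B_i}_{G^P}\le C\,\dist^{B_i}_{H^p}$ and
\[
\sup_{F_0,F_1}\frac{\dist^{B_i}_\infty(F_0,F_1)}{\dist^{B_i}_{G^P}(F_0,F_1)}\ \ge\ \frac1C\,\sup_{F_0,F_1}\frac{\dist^{B_i}_\infty(F_0,F_1)}{\dist^{B_i}_{H^p}(F_0,F_1)}.
\]
So it is enough to make the right-hand supremum infinite when $p<\dim(M)/2+1$.

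Fix an immersion $f_0$ and a chart where $f_0$ is a graph over a flat piece of $M$, with a fixed unit vector $\nu_0$ transverse to $f_0$. For a fixed bump profile $\psi$ supported in the unit ball with $\psi(0)=1$, a small fixed height $H$, and $\phi$ with $\phi(0)=0,\ \phi(1)=H$, set $\psi_\epsilon:=\psi(\,\cdot\,/\epsilon)$ and
\[
f^\epsilon(t,x)=f_0(x)+\phi(t)\,\psi_\epsilon(x)\,\nu_0 .
\]
This grows a spike of base-width $\epsilon$ to height $H$. Its tip $f^\epsilon(1,x_0)=f_0(x_0)+H\nu_0$ lies at distance $\approx H$ from $f_0(M)$, so for every $\varphi\in\Diff(M)$ the quantity $\norm{f_0\circ\varphi-f^\epsilon(1,\cdot)}_{L^\infty}$, evaluated at $x_0$, is at least $\dist(\text{tip},f_0(M))\ge cH>0$; hence $\dist^{B_i}_\infty(\pi f_0,\pi f^\epsilon(1,\cdot))\ge cH$, uniformly in $\epsilon$.

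For the cost I would use that $N=\R^n$ is flat. Then $f^\epsilon_t=\dot\phi\,\psi_\epsilon\,\nu_0$ is a scalar function times the \emph{parallel} section $\nu_0$ of the trivial bundle $(f^\epsilon)^*T\R^n$, so $\Delta^k f^\epsilon_t=(\Delta_g^k\psi_\epsilon)\dot\phi\,\nu_0$, with $\Delta_g$ the scalar Laplacian of the induced metric $g=(f^\epsilon)^*\g$, and therefore $\norm{f^\epsilon_t}_{H^p}^2=\dot\phi^2\,\norm{\psi_\epsilon}_{H^p(M,g)}^2$. Reparametrising time by $h_0=\phi(t)$ makes the length independent of the choice of $\phi$:
\[
L^{\Imm}_{H^p}(f^\epsilon)=\int_0^H\norm{\psi_\epsilon}_{H^p(M,g(h_0))}\,dh_0 .
\]
It then remains to estimate this scalar Sobolev norm on the deforming manifold. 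For $h_0\lesssim\epsilon$ the induced metric is uniformly comparable to the flat one and $\norm{\psi_\epsilon}_{H^p}\asymp\epsilon^{(\dim(M)-2p)/2}$; for $\epsilon\lesssim h_0$ the finger is intrinsically a thin tube of length $\asymp h_0$ and cross-radius $\asymp\epsilon$, on which the top-order seminorm gives $\norm{\psi_\epsilon}_{H^p}\asymp\epsilon^{(\dim(M)-1)/2}h_0^{(1-2p)/2}$. Both contributions integrate to $\asymp\epsilon^{(\dim(M)+2-2p)/2}$ (up to a logarithm at the borderline exponent), which tends to $0$ exactly when $p<\dim(M)/2+1$. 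Combined with $\dist^{B_i}_{H^p}\le L^{\Imm}_{H^p}(f^\epsilon)$ and the uniform bound $\dist^{B_i}_\infty\ge cH$, the ratio diverges as $\epsilon\to0$, which is the claim.

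The hard part will be the intrinsic Sobolev estimate on the collapsing tube: one must control the powers $\Delta_g^p$ and the volume $\vol(g)$ under the anisotropic degeneration (stretched by $\asymp H/\epsilon$ along the finger, unchanged across it) and check that the intermediate orders $j<p$ are genuinely dominated by the top-order scaling above. This is precisely where the shift from the naive Sobolev-embedding threshold $\dim(M)/2$ to $\dim(M)/2+1$ comes from, via the collapse of the cross-sectional injectivity radius. A secondary task is to verify that $f^\epsilon(t,\cdot)$ remains an embedding and that the tip really realises distance $\gtrsim H$ to $f_0(M)$, which holds once $H$ is small relative to the reach of $f_0$. The very-low-dimensional case, where there is no cross-section to collapse, is closest to the planar-curve analysis of \cite{Michor107} and would be handled separately.
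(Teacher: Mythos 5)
Your proposal takes a genuinely different route from the paper, and it has two concrete gaps. The paper does not try to make the cost of a fixed-size deformation tend to zero; it makes the Fr\'echet distance arbitrarily large at uniformly bounded $G^P$-cost, via shrink--translate--grow: scaling $f_0\mapsto r.f_0$ gives $\Delta^{(r.f_0)^*\g}=r^{-2}\Delta^{f_0^*\g}$ and $\vol\big((r.f_0)^*\g\big)=r^m\vol(f_0^*\g)$, so the path shrinking $f_0$ to a point has length $\int_0^1\big(\int_M\g\big((1+r^{-2p}\Delta^p)f_0,f_0\big)r^m\vol(f_0^*\g)\big)^{1/2}dr$, finite exactly when $p<\tfrac m2+1$, and translating the shrunken surface by an arbitrarily long vector then costs only $O(r_0^{m/2})$ per unit of translation length; this is dimension-uniform. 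Your finger construction, in contrast, provably fails when $\dim(M)=1$, a case the lemma covers (curves, $p=1<3/2$): there is no transverse factor $\epsilon^{(m-1)/2}$ to gain, and Cauchy--Schwarz applied to $\psi_\epsilon$, which changes by $1$ along each side of the finger (of length $\asymp h_0$), forces $\norm{\psi_\epsilon}^2_{H^1(g(h_0))}\gtrsim h_0^{-1}$, hence path length $\gtrsim\sqrt H$ uniformly in $\epsilon$; sending $H\to\infty$ instead is hopeless because the $L^2$-term alone gives cost $\gtrsim H^{3/2}$ against a Fr\'echet distance $\asymp H$. Deferring this case to the ``planar-curve analysis'' of \cite{Michor107} does not close it: the argument that works there is precisely the global shrinking one, i.e.\ the paper's, not any local deformation.

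Second, for $\dim(M)\ge2$ the estimate you yourself flag as the hard part is misstated, not merely unproven: the dominant contributions to $\int_M\g\big(\Delta^p(\psi_\epsilon\nu_0),\psi_\epsilon\nu_0\big)\vol(g)$ do not come from the uniform tube scaling $|\nabla^j\psi_\epsilon|\asymp h_0^{-j}$. The deformed surface has a nearly flat cap of intrinsic radius $\asymp\epsilon^2/h_0$ at the tip, on which $\Delta_g\psi_\epsilon\asymp\epsilon^{-2}$ (not $h_0^{-2}$), and a similar anomalous bend region at the base rim; the cap contributes $\asymp\epsilon^{2m-2p}h_0^{-m}$, which dominates your wall term $\epsilon^{m-1}h_0^{1-2p}$ whenever $p>(m+1)/2$. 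It happens that the cap's contribution to the length integrates to $\asymp\epsilon^{m/2+1-p}$ (with a logarithm when $m=2$), vanishing exactly when $p<\tfrac m2+1$; and for $p<3/2$ the true overall rate is $\epsilon^{(m-1)/2}$ rather than your claimed $\epsilon^{(m+2-2p)/2}$. So the conclusion for $m\ge2$ is probably salvageable, but only after the region-by-region analysis (cap, rim, wall, plus fractional powers of $\Delta_g$ on a degenerating metric when $p$ is not an integer) that the proposal postpones, with asymptotics different from the ones written. Given that the paper's scaling computation is exact, elementary, and covers all dimensions at once, I recommend adopting it rather than repairing the finger estimates.
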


\begin{proof}
It is sufficient to prove the claim for $P=1+\Delta^p$.
Let $f_0$ be a fixed immersion of $M$ into $\R^n$, 
and let $f_1$ be a translation of $f_0$ by a vector $h$ of length $\ell$. 
It will be shown that the $H^p$-distance between $\pi(f_0)$ and $\pi(f_1)$ 
is bounded by a constant $2L$ that does 
not depend on $\ell$, where $\pi$ denotes the projection of $\Imm$ onto $B_i$. 
Then it follows that the $H^p$-distance can not be bounded from below by the Fr\'echet distance, 
and this proves the claim. 

For small $r_0$, one calculates the $H^p$-length of the following path of immersions:
First scale  $f_0$ by a factor $r_0$, then translate it by $h$, and then
scale it again until it has reached $f_1$. 
The following calculation  shows that under the assumption $p<m/2+1$ 
the immersion $f_0$ can be scaled down to  zero in finite $H^p$-pathlength $L$.
Let $r: [0,1] \to [0,1]$ be a function of time with $r(0)=1$ and $r(1)=0$. 
\begin{align*}
L_{\Imm}^{G^P}\big(r.f_0\big)&=\int_0^1\sqrt{\int_M\g\Big(r_t.\big(1+(\Delta^{(r.f_0)^*\g})^p\big)(f_0),r_t.f_0\Big)\vol\big((r.f_0)^*\g\big)}dt\\
&=\int_0^1\sqrt{\int_M r^2_t.\g\Big(\big(1+\frac{1}{r^{2p}}(\Delta^{f_0^*\g})^p\big)(f_0),f_0\Big)r^m\vol\big(f_0^*\g\big)}dt\\
&=\int_1^0\sqrt{\int_M\g\Big(\big(1+\frac{1}{r^{2p}}(\Delta^{f_0^*\g})^p\big)(f_0),f_0\Big)r^m\vol\big(f_0^*\g\big)}dr =: L
\end{align*}
The last integral converges if $\frac{m-2p}{2}<-1$, which holds by assumption. 
Scaling down to $r_0>0$ needs even less effort. 
So one sees that the length of the shrinking and growing part of the path is bounded by $2L$. 

The length of the translation is simply $\ell \sqrt{r_0^m \Vol(f_0)}=O(r^{m/2})$ 
since the Laplacian of the constant vector field vanishes. Therefore 
\begin{equation*}
\dist_{B_i}^{G^P}\big(\pi(f_0),\pi(f_1)\big) \leq \dist_{\Imm}^{G^P}(f_0,f_1) \leq 2L. \qedhere
\end{equation*}
\end{proof}

\subsection{Concentric spheres}\label{su:co}

For a Sobolev type metric $G^P$ that is invariant under the action of the $SO(n)$ on $\R^n$, 
the set of hyper-spheres in $\R^n$ with common center $0$
is a totally geodesic subspace of $B_i(S^{n-1},\R^n)$. 
The reason is that it is the fixed point set 
of the group $SO(n)$ acting on $B_i$ isometrically. 
(One also needs uniqueness of solutions to the geodesic equation to prove that 
the concentric spheres are totally geodesic.)
This section mainly deals with the case $P=1+\De^p$. 

First we want to determine under what conditions the set of concentric spheres is geodesically complete
under the $G^P$-metric. 

\begin{lem*}
The space of concentric spheres is complete with respect to the $G^P$ 
metric with $P=1+A\Delta^p$ iff $p\geq(n+1)/2$. 
\end{lem*}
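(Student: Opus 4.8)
The plan is to reduce the problem to a one-dimensional geodesic completeness question on the positive half-line, since the space of concentric spheres centered at the origin is parametrized by a single radius $r>0$. First I would write a sphere of radius $r$ as the immersion $f_r = r\cdot \iota$, where $\iota:S^{n-1}\to\R^n$ is the standard unit embedding. A tangent vector along this curve is $f_t = r_t\cdot\nu$ (purely normal, hence automatically horizontal) where $\nu=\iota$ is the outer unit normal. I would then compute the induced metric $g=f_r^*\g = r^2 g_{S^{n-1}}$, so that $\vol(g)=r^{n-1}\vol(g_{S^{n-1}})$ and the Bochner-Laplacian scales as $\Delta^{f_r^*\g} = r^{-2}\Delta^{S^{n-1}}$. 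The key simplification is that $\nu$ is (up to the scaling) an eigen-type object: applied to the constant-on-each-sphere radial field, $\Delta^p(r_t\nu)$ picks up a factor $r^{-2p}$ times a fixed curvature eigenvalue of $S^{n-1}$.

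With these scalings in hand, the next step is to express the $G^P$-length element of a radial path. Evaluating
$$G^P_{f_r}(f_t,f_t)=\int_{S^{n-1}}\g\big((1+A\Delta^p)(r_t\nu),r_t\nu\big)\,r^{n-1}\vol(g_{S^{n-1}})$$
I expect to obtain, after pulling out the $r$-dependence, an expression of the form $r_t^2\big(c_0 r^{n-1}+c_1 A\, r^{n-1-2p}\big)$ for positive constants $c_0,c_1$ depending only on $n,p$ and the relevant eigenvalue of the Laplacian on $S^{n-1}$ acting on the normal field. Thus the induced Riemannian metric on the radius coordinate is $ds^2 = \big(c_0 r^{n-1}+c_1 A r^{n-1-2p}\big)\,dr^2$, and the space of concentric spheres is isometric to $(0,\infty)$ with this metric. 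Completeness of this one-dimensional manifold is equivalent to the radius $r=0$ being at infinite distance (the end $r\to\infty$ is always complete since the integrand grows).

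The decisive computation is then the convergence of the arc-length integral $\int_0^1 \sqrt{c_0 r^{n-1}+c_1 A r^{n-1-2p}}\,dr$ near $r=0$. The dominant term as $r\to 0$ is $r^{(n-1-2p)/2}$ (since $n-1-2p<n-1$), so the integral $\int_0 r^{(n-1-2p)/2}\,dr$ converges precisely when $(n-1-2p)/2>-1$, i.e.\ $n-1-2p>-2$, i.e.\ $p<(n+1)/2$; it diverges when $p\ge (n+1)/2$. Divergence means the point $r=0$ lies at infinite distance, so no finite-length path escapes to the degenerate sphere, giving completeness exactly when $p\ge(n+1)/2$. This matches the claimed threshold.

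I expect the main obstacle to be the bookkeeping in the second step: verifying the precise form of the coefficients and, in particular, confirming that $\Delta^{S^{n-1}}$ acting on the position-vector (equivalently the normal) field $\nu$ produces a clean eigenvalue so that the whole integrand collapses to the advertised power $r^{n-1-2p}$. This requires knowing how the Bochner-Laplacian acts on the restriction of the identity vector field of $\R^n$ to the sphere; the fastest route is to use the variational/scaling formulas for $g$, $\vol(g)$ and $\Delta$ from section~\ref{va} together with the fact that $\nu$ is parallel-transport-compatible with the radial scaling, rather than to recompute $\Delta^p\nu$ from scratch. Once that eigenvalue identity is pinned down, the completeness criterion follows immediately from the elementary convergence test, so the analytic heart of the lemma is genuinely this single improper integral.
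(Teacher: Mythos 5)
Your proposal is correct and follows essentially the same route as the paper: parametrize the concentric spheres by the radius, observe that $Pf_t$ stays proportional to $\nu$ so that radial paths are horizontal and the problem becomes one-dimensional, and then decide completeness by testing convergence of the length integral $\int \sqrt{\big(c_0 r^{n-1}+c_1 A r^{n-1-2p}\big)}\,dr$ at $r=0$ and $r=\infty$, yielding the threshold $p\geq (n+1)/2$. The single point you deferred --- that the Bochner--Laplacian has a clean eigenvalue on $\nu$ --- is exactly what the paper settles by the Weingarten computation $\nabla\nu=-Tf.L$, $\Delta\nu=\Tr(L^2)\,\nu=\frac{n-1}{r^2}\nu$, which also justifies the horizontality claim (for $G^P$ one needs $(Pf_t)^\top=0$, not merely $f_t^\top=0$, so normality of $f_t$ alone would not suffice without this eigenvalue identity).
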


\begin{proof}
The space is complete
if and only if it is impossible to scale a sphere down to zero or up to 
infinity in finite $G^P$ path-length. 
So let $f$ be a path of concentric spheres. 
It is uniquely described by its radius $r$. Its velocity is $f_t=r_t.\nu$, where $\nu$ 
designates the unit normal vector field. 
One has
$$\g\big(g\i.S,\nu\big)=:L=-\tfrac{1}{r}\on{Id}_{TM}, 
\quad \Tr(L^k)=(-1)^k\tfrac{n-1}{r^k},\quad \Vol=r^{n-1}\tfrac{n\pi^{n/2}}{\Gamma(n/2+1)}.$$
Keep in mind that $r$ and $r_t$ are constant functions on the sphere, so that all 
derivatives of them vanish.
Therefore 
\begin{align*}
\Delta \nu&=\nabla^*(\nabla \nu)=\nabla^*(-Tf.L)=\Tr^g\Big(\nabla(Tf.L)\Big)
\\&=\Tr^g\Big(\nabla(Tf).L\Big)+\Tr^g\Big(Tf.(\nabla L)\Big)\\&=
\Tr(L^2).\nu+ \Tr^g\Big(Tf.\nabla(-\tfrac1r\on{Id}_{TM})\Big)=\frac{n-1}{r^2}.\nu+0
\end{align*}
and
\begin{align*}
Pf_t&=(1+A\Delta^p)(r_t.\nu)=r_t.\left(1+A\frac{(n-1)^p}{r^{2p}}\right).\nu.
\end{align*}
From this it is clear that the path $f$ is horizontal. Therefore its length as a path in $B_i$ 
is the same as its length as a path in $\Imm$. 
One calculates its length as in the proof of \ref{su:fr2}:
\begin{align*}
L^{G^P}_{B_i}(f)&=\int_0^1\sqrt{G_f^P(f_t,f_t)}dt=
\int_0^1\sqrt{\int_M r_t^2.\left(1+A\frac{(n-1)^p}{r^{2p}}\right)\vol(g)}dt\\&
=\int_0^1 |r_t|\sqrt{ \left(1+A\frac{(n-1)^p}{r^{2p}}\right)\frac{n.\pi^{n/2}}{\Gamma(n/2+1)}r^{n-1}}dt\\&
=\sqrt{\frac{n.\pi^{n/2}}{\Gamma(n/2+1)}}\int_{r_0}^{r_1} \sqrt{\left(1+A\frac{(n-1)^p}{r^{2p}}\right)r^{n-1}}dr.
\end{align*}
The integral diverges for $r_1 \to \infty$ since the integrand is greater than $r^{(n-1)/2}$. 
It diverges for $r_0 \to 0$ iff $(n-1-2p)/2 \leq -1$, which is equivalent to $p \geq (n+1)/2$. 
\end{proof}

The geodesic equation within the space of concentric spheres 
reduces to an ODE for the radius that can be read off 
the geodesic equation in section \ref{la:ge}:
\begin{align*}
r_{tt}=-r_t^2\Big(\frac{n-1}{2r}-\frac{p.A.(n-1)^p}{r\big(r^{2p}+A(n-1)^p\big)}\Big).
\end{align*}

\section{Diffeomorphism groups}\label{di}

For $M=N$ the space $\Emb(M,M)$ equals the \emph{diffeomorphism group of $M$}. 
An operator $P \in \Ga\big(L(T\Emb;T\Emb)\big)$ that is invariant under reparametrizations
induces a right-invariant Riemannian metric on this space. 
Thus one gets the geodesic equation for 
right-invariant Sobolev metrics on diffeomorphism groups and well-posedness of this equation.
To the authors knowledge, well-posedness has so far only been shown for the special case 
$M=S^1$ in \cite{Constantin2003}
and for the special case of Sobolev order one metrics in \cite{GayBalmaz2009}.
Theorem~\ref{so:we} establishes this result for arbitrary compact $M$ and
Sobolev metrics of arbitrary order. 

In the decomposition of a vector $h \in T_f\Emb$ into its tangential and normal components 
$h=Tf.h^\top + h^\bot$, the normal part $h^\bot$ vanishes.
Also $S=\nabla Tf$ vanishes. 
Thus the geodesic equation on $\Diff(M)$ in terms of the momentum $p$ is given by (see \ref{so:gemo})
\begin{equation*}
\left\{\begin{aligned}
p &= Pf_t \otimes \vol(g), \\
\nabla_{\p_t}p &=-Tf.\g(Pf_t,\nabla f_t)^\sharp \otimes \vol(g).
\end{aligned}\right.
\end{equation*}
Note that this equation is not right-trivialized, in contrast to 
the equation given in \cite{Arnold1966,Michor102,Michor109}, for example.
The special case of theorem \ref{so:we} now reads as follows:

\begin{thm*} Let $p\ge 1$ and $k>\frac{\dim(M)}2+1$ and let $P$ satisfy assumptions \thetag{1--3} 
of \ref{so:we}.

Then the initial value problem for the geodesic equation 
has unique local solutions in the Sobolev manifold $\on{Diff}^{k+2p}$ of
$H^{k+2p}$-diffeomorphisms. The solutions depend smoothly on $t$ and on the initial
conditions $f(0,\;.\;)$ and $f_t(0,\;.\;)$.
The domain of existence (in $t$) is uniform in $k$ and thus this
also holds in $\on{Diff}(M)$.

Moreover, in each Sobolev completion $\on{Diff}^{k+2p}$, the Riemannian exponential mapping $\exp^{P}$ exists 
and is smooth on a neighborhood of the zero section in the tangent bundle, 
and $(\pi,\exp^{P})$ is a diffeomorphism from a (smaller) neigbourhood of the zero 
section to a neighborhood of the diagonal in 
$\on{Diff}^{k+2p}\x \on{Diff}^{k+2p}$.
All these neighborhoods are uniform in $k>\dim(M)/2+1$ and can be chosen 
$H^{k_0+2p}$-open, for $k_0 > \dim(M)/2+1$. Thus both properties of the exponential 
mapping continue to hold in $\on{Diff}(M)$.
\end{thm*}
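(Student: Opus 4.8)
The plan is to obtain the statement as the special case $N=M$ of Theorem~\ref{so:we}, restricted to the open submanifold of diffeomorphisms. As recalled at the beginning of this section, for $M=N$ one has $\Emb(M,M)=\Diff(M)$: an $H^{k+2p}$-embedding of the compact connected manifold $M$ into itself is, for $k+2p>\dim(M)/2+1$, of class $C^1$ and hence a bijective local diffeomorphism (its image being nonempty, open by invariance of domain, and closed by compactness, thus all of $M$), i.e. an element of $\Diff^{k+2p}(M)$. First I would record that $\Emb^{k+2p}(M,M)$ is an open subset of $\Imm^{k+2p}(M,M)$: by the Sobolev lemma the relevant completions consist of $C^1$-maps, and being an immersion, being injective, and being a homeomorphism onto the image are all open conditions for $C^1$-maps of a compact manifold. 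Thus $\Diff^{k+2p}(M)=\Emb^{k+2p}(M,M)$ is open in $\Imm^{k+2p}(M,M)$, and $\Diff(M)=\bigcap_k \Diff^{k+2p}(M)$ sits inside $\Imm(M,M)$ as an open subset.

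Next I would specialise the geodesic spray. Since the image of $f\in\Diff(M)$ is all of $M$, the normal bundle $\Nor(f)$ is trivial, so $h^\bot=0$ for every $h\in T_f\Diff$ and the second fundamental form $S=\nabla Tf$ vanishes; the geodesic equation of section~\ref{so:gemo} therefore collapses to
$$\nabla_{\p_t}p=-Tf.\g(Pf_t,\nabla f_t)^\sharp\otimes\vol(g),\qquad p=Pf_t\otimes\vol(g),$$
as already noted above. More importantly, the proof of Theorem~\ref{so:we} shows that the full geodesic spray is a smooth vector field on $T\Imm^{k+2p}(M,M)$; its restriction to the open subset $T\Diff^{k+2p}(M)$ is again a smooth vector field. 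Here assumptions~1--3 for $P$ are used verbatim with $N=M$: $P_f$ is an elliptic, positive, symmetric, $\Diff(M)$-invariant pseudo-differential operator of order $2p$ on $T_f\Imm(M,M)=\Ga(f^*TM)$, so that $P_f\i$ exists and is elliptic of order $-2p$, exactly as in the general case.

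With these two observations the conclusions follow directly from Theorem~\ref{so:we}. Because $\Diff^{k+2p}(M)$ is open in $\Imm^{k+2p}(M,M)$, a solution of the geodesic equation starting at a diffeomorphism remains in $\Diff^{k+2p}(M)$ for a short time by continuity; local existence, uniqueness, and smooth dependence on $t$ and on the initial data are then inherited from the immersion case, as is the uniformity in $k$ of the interval of existence, since the no-loss-of-derivatives argument of Theorem~\ref{so:we}, which passes through the auxiliary first order system for $(\hat\nabla Tf,\hat\nabla f_t)$, is insensitive to whether $f$ is an immersion or a diffeomorphism. Intersecting over $k$ yields solutions in $\Diff(M)$. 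The statements about $\exp^P$ and about $(\pi,\exp^P)$ being a diffeomorphism onto a neighbourhood of the diagonal, together with their uniformity in $k$, transfer verbatim because they are local statements read off in the canonical charts, and the charts for $\Diff^{k+2p}$ are restrictions of those for $\Imm^{k+2p}$.

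I expect the only genuinely delicate point to be the identification $\Emb^{k+2p}(M,M)=\Diff^{k+2p}(M)$ together with its openness in $\Imm^{k+2p}(M,M)$ at the level of Sobolev completions; this rests on the Sobolev embedding into $C^1$, valid since $k+2p>\dim(M)/2+1$, and on the standard fact that $\Diff^{s}(M)$ is a smooth Hilbert manifold and a topological group for $s>\dim(M)/2+1$. Everything else is a direct transcription of Theorem~\ref{so:we} to an open submanifold.
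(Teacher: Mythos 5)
Your proposal is correct and follows essentially the same route as the paper: the paper obtains this theorem simply by specializing Theorem~\ref{so:we} to the case $N=M$, where $\Emb(M,M)=\Diff(M)$, normal parts and the second fundamental form vanish, and the geodesic equation simplifies accordingly. The details you supply --- the identification $\Emb^{k+2p}(M,M)=\Diff^{k+2p}(M)$ via the Sobolev embedding into $C^1$, the openness of $\Diff^{k+2p}(M)$ in $\Imm^{k+2p}(M,M)$, and the persistence of solutions in this open subset --- are precisely the points the paper leaves implicit when invoking that theorem.
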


\section{Numerical results}\label{nu}

It is of great interest for shape comparison to solve the \emph{boundary value problem} 
for geodesics in shape space. 
When the boundary value problem can be solved, 
then any shape can be encoded as the initial momentum of a geodesic 
starting at a fixed reference shape. 
Since the initial momenta all lie in the same vector space, 
this also opens the way to statistics on shape space. 

There are two approaches to solving the boundary value problem.
In \cite{Michor118} the first approach of minimizing \emph{horizontal path energy} 
over the set of curves in $\Imm$ 
connecting two fixed boundary shapes has been taken. 
This has been done for several almost local metrics. 
For these metrics it is straightforward to calculate the horizontal energy because 
the horizontal bundle equals the normal bundle.  
However, in the case of Sobolev type metrics 
the horizontal energy involves the inverse of a differential operator (see section~\ref{so:ho}), 
which makes this approach much harder.

\begin{figure}[h]
\centering
\includegraphics[width=\textwidth-10pt]{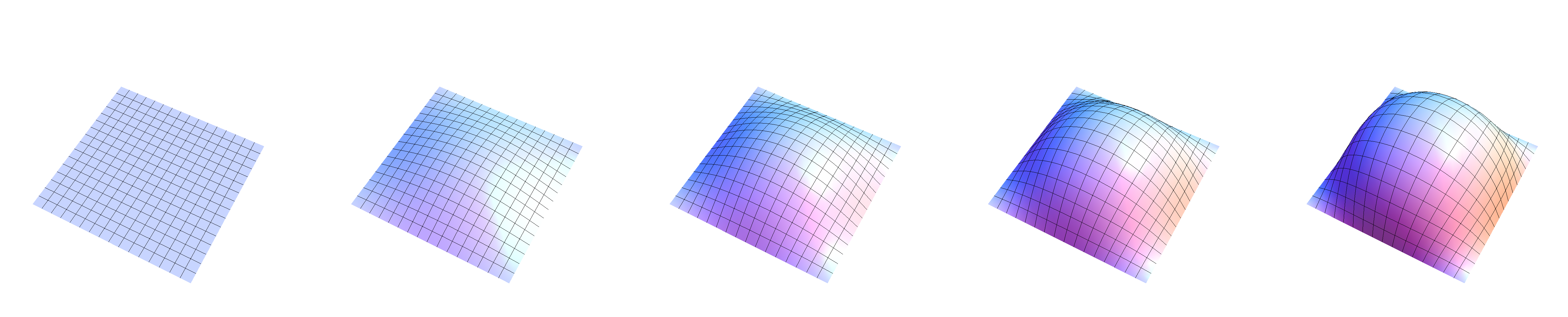}
\caption{Geodesic where a bump is formed out a flat plane. 
The initial momentum is $a=\sin(x)\sin(y)$.
Time increases linearly from left to right. The final time is $t=5$. }
\label{nu:bump_1}
\end{figure}

\begin{figure}[h]
\centering
\begin{psfrags}
\def\PFGstripminus-#1{#1}%
\def\PFGshift(#1,#2)#3{\raisebox{#2}[\height][\depth]{\hbox{%
  \ifdim#1<0pt\kern#1 #3\kern\PFGstripminus#1\else\kern#1 #3\kern-#1\fi}}}%
\providecommand{\PFGstyle}{}%
\psfrag{a08220A}[cr][cr]{\PFGstyle $0.8220$}%
\psfrag{a08220B}[Bc][Bc]{\PFGstyle $0.8220$}%
\psfrag{a08220}[Bc][Bc][1.][0.]{\PFGstyle $0.8220$}%
\psfrag{a08225A}[cr][cr]{\PFGstyle $0.8225$}%
\psfrag{a08225B}[Bc][Bc]{\PFGstyle $0.8225$}%
\psfrag{a08225}[Bc][Bc][1.][0.]{\PFGstyle $0.8225$}%
\psfrag{a08230A}[cr][cr]{\PFGstyle $0.8230$}%
\psfrag{a08230B}[Bc][Bc]{\PFGstyle $0.8230$}%
\psfrag{a08230}[Bc][Bc][1.][0.]{\PFGstyle $0.8230$}%
\psfrag{a08235A}[cr][cr]{\PFGstyle $0.8235$}%
\psfrag{a08235B}[Bc][Bc]{\PFGstyle $0.8235$}%
\psfrag{a08235}[Bc][Bc][1.][0.]{\PFGstyle $0.8235$}%
\psfrag{a08240A}[cr][cr]{\PFGstyle $0.8240$}%
\psfrag{a08240B}[Bc][Bc]{\PFGstyle $0.8240$}%
\psfrag{a08240}[Bc][Bc][1.][0.]{\PFGstyle $0.8240$}%
\psfrag{a08245A}[cr][cr]{\PFGstyle $0.8245$}%
\psfrag{a08245B}[Bc][Bc]{\PFGstyle $0.8245$}%
\psfrag{a08245}[Bc][Bc][1.][0.]{\PFGstyle $0.8245$}%
\psfrag{a08250A}[cr][cr]{\PFGstyle $0.8250$}%
\psfrag{a08250B}[Bc][Bc]{\PFGstyle $0.8250$}%
\psfrag{a08250}[Bc][Bc][1.][0.]{\PFGstyle $0.8250$}%
\psfrag{a0A}[Bc][Bc]{\PFGstyle $0$}%
\psfrag{a0}[Bc][Bc][1.][0.]{\PFGstyle $0$}%
\psfrag{a1A}[tc][tc]{\PFGstyle $1$}%
\psfrag{a1B}[Bc][Bc]{\PFGstyle $1$}%
\psfrag{a1}[Bc][Bc][1.][0.]{\PFGstyle $1$}%
\psfrag{a2A}[tc][tc]{\PFGstyle $2$}%
\psfrag{a2B}[Bc][Bc]{\PFGstyle $2$}%
\psfrag{a2}[Bc][Bc][1.][0.]{\PFGstyle $2$}%
\psfrag{a3A}[tc][tc]{\PFGstyle $3$}%
\psfrag{a3B}[Bc][Bc]{\PFGstyle $3$}%
\psfrag{a3}[Bc][Bc][1.][0.]{\PFGstyle $3$}%
\psfrag{a4A}[tc][tc]{\PFGstyle $4$}%
\psfrag{a4B}[Bc][Bc]{\PFGstyle $4$}%
\psfrag{a4}[Bc][Bc][1.][0.]{\PFGstyle $4$}%
\psfrag{a5A}[tc][tc]{\PFGstyle $5$}%
\psfrag{a5B}[Bc][Bc]{\PFGstyle $5$}%
\psfrag{a5}[Bc][Bc][1.][0.]{\PFGstyle $5$}%
\psfrag{eA}[Bc][Bc]{\PFGstyle $G^P(f_t,f_t)$}%
\psfrag{e}[bc][bc]{\PFGstyle $G^P(f_t,f_t)$}%
\psfrag{tA}[Bc][Bc]{\PFGstyle $t$}%
\psfrag{t}[cl][cl]{\PFGstyle $t$}%
\includegraphics[width=\textwidth-10pt]{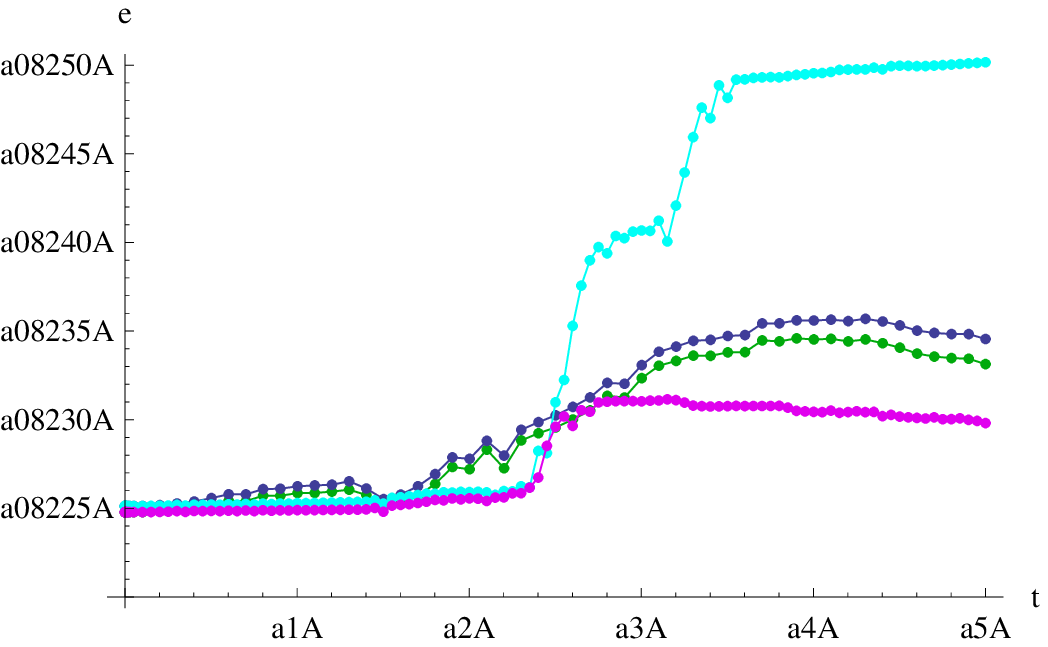}
\end{psfrags}
\caption{Conservation of the energy $G^P(f_t,f_t)$ along the geodesic in figure~\ref{nu:bump_1}. 
The true value of $G^P(f_t,f_t)$ is $\tfrac{\pi^2}{4(1+2A)} \approx 0.822467$ for $A=1$. 
The maximum time step used in blue and green is 0.1. For purple and cyan it is 0.05. The number
of grid points used in blue and cyan is 100 times 100. For green and purple it is 200 times 200.}
\label{nu:energyplot}
\end{figure}

\begin{figure}[h]
\centering
\includegraphics[width=\textwidth-10pt]{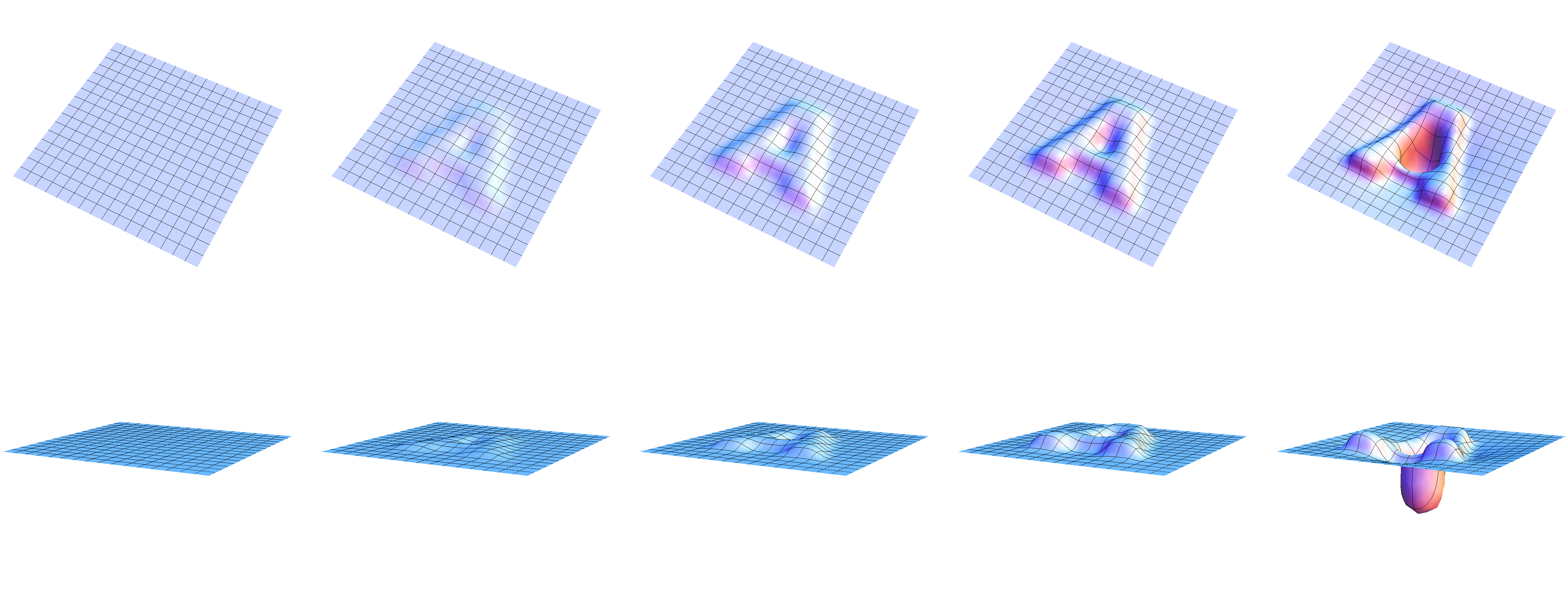}
\caption{Letter A forming along a geodesic path. Time increases linearly from left to right. 
The final time is $t=0.8$.
Top and bottom row are different views of the same geodesic. }
\label{nu:A_ge}
\end{figure}

\begin{figure}[h]
\centering
\includegraphics[width=\textwidth-10pt]{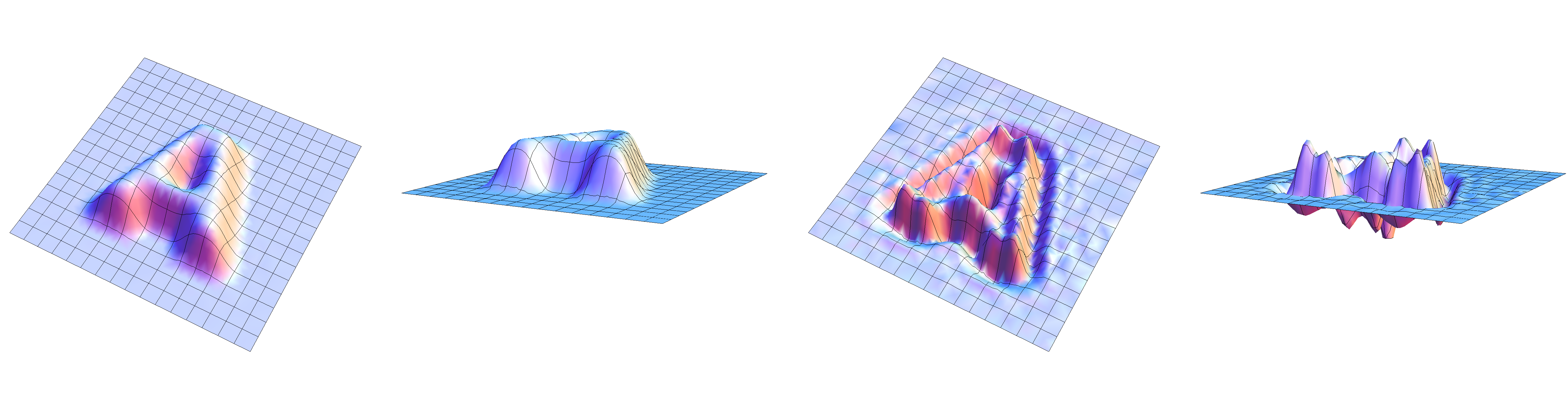}
\caption{Initial velocity $f_t(0,\cdot)$ and momentum $a(0,\cdot)$ of the geodesic in figure \ref{nu:A_ge}. 
Both are shown first from above, then from the side. }
\label{nu:A_in}
\end{figure}

\begin{figure}[h]
\centering
\includegraphics[width=\textwidth-10pt]{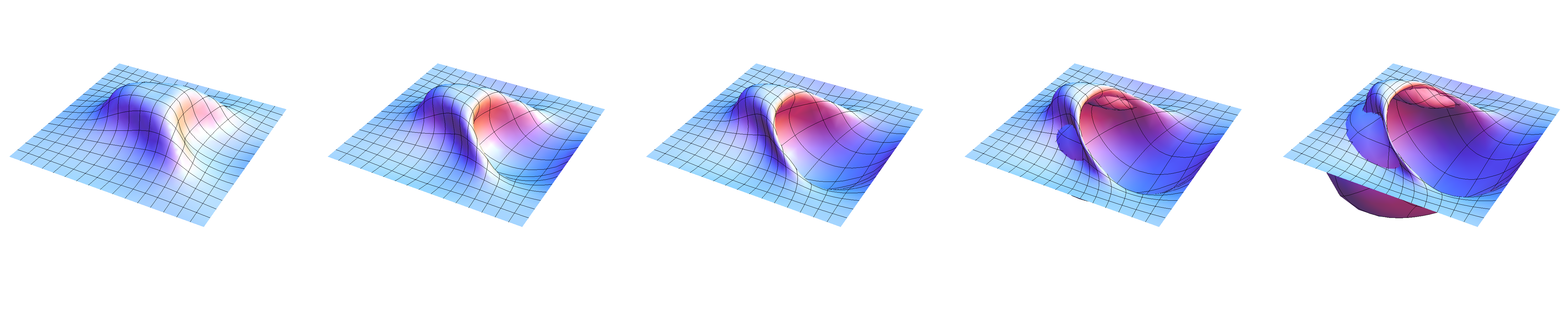}
\caption{A self-intersection forming along a geodesic. Time increases linearly from left to right. }
\label{nu:se}
\end{figure}

The second approach is the method of \emph{geodesic shooting}. 
This method is based on iteratively solving the initial value problem 
while suitably adapting the initial conditions. 
The theoretical requirements of existence of solutions to the geodesic equation and smooth dependence on 
initial conditions are met for Sobolev type metrics, see section \ref{so:we}.
This makes geodesic shooting a promising approach.

{\it The first step towards this aim is to numerically solve the initial value problem for geodesics, 
at least for the $H^1$-metric and the case of surfaces in $\R^3$, 
and that is what will be presented in this work. 
}

The geodesic equation on shape space is equivalent to the horizontal geodesic equation
on the space of immersions. For the case of surfaces in $\R^3$, 
it takes the form given in section~\ref{su:ge}.
This equation can be conveniently set up
using the DifferentialGeometry package incorporated in the computer algebra system Maple
as demonstrated in figure~\ref{nu:maple}. 
(The equations that have actually been solved were simplified 
by multiplying intermediate terms with suitable powers of 
$\sqrt{\vol(g)}$, but for the sake of clearness this has not been included in the Maple code
in figure~\ref{nu:maple}.)

\begin{figure}[h]
\lstset{basicstyle=\small\ttfamily,
	backgroundcolor=\color[gray]{0.9},
	numbers=left, numberstyle=\scriptsize, stepnumber=2, numbersep=5pt,
  	commentstyle=\small,columns=flexible,
  	showstringspaces=false}
\begin{lstlisting}
with(DifferentialGeometry);with(Tensor);with(Tools);
DGsetup([u,v],[x,y,z],E);
f := evalDG(f1(t,u,v)*D_x+f2(t,u,v)*D_y+f3(t,u,v)*D_z);
G := evalDG(dx &t dx + dy &t dy + dz &t dz);
Gamma_vrt := 0 &mult Connection(dx &t D_x &t du);
Tf := CovariantDerivative(f,Gamma_vrt);
g:=ContractIndices(G &t Tf &t Tf,[[1,3],[2,5]]);
g_inv:=InverseMetric(g);
Gamma_bas:=Christoffel(g);
det:=Hook([D_u,D_u],g)*Hook([D_v,D_v],g)-Hook([D_u,D_v],g)^2;
ft := evalDG(diff(f1(t,u,v),t)*D_x+diff(f2(t,u,v),t)*D_y
	+diff(f3(t,u,v),t)*D_z);
ft := convert(ft,DGtensor);
S := CovariantDerivative(Tf,Gamma_vrt,Gamma_bas);
cross:=evalDG((dy &w dz) &t D_x + (dz &w dx) &t D_y + (dx &w dy) &t D_z);
N:=Hook([ContractIndices(Tf &t D_u,[[2,3]]),
	ContractIndices(Tf &t D_v,[[2,3]])],cross);
nu:=convert(N/sqrt(Hook([N,N],G)),DGvector);
s := ContractIndices(G &t S &t nu, [[1,3],[2,6]]);
L := ContractIndices(g_inv &t s,[[2,3]]);
Gftft := ContractIndices(G &t ft &t ft,[[1,3],[2,4]]);
Cft := CovariantDerivative(ft,Gamma_vrt);
CCft := CovariantDerivative(Cft,Gamma_vrt,Gamma_bas);
Dft := ContractIndices(-g_inv &t CCft,[[1,4],[2,5]]);
Pft := evalDG(ft+A*Dft);
GCftCft := ContractIndices(Cft &t Cft &t G,[[1,5],[3,6]]);
gGCftCft := ContractIndices(g_inv &t GCftCft,[[2,3]]);
TrLgGCftCft := ContractIndices(L &t gGCftCft,[[1,4],[2,3]]);
TrgGCftCft := ContractIndices(gGCftCft,[[1,2]]);
TrL := ContractIndices(L,[[1,2]]);
# b(t,u,v) := a(t,u,v)*sqrt(det);
eq1 := ContractIndices(Pft &t dx,[[1,2]])*sqrt(det) = 
	b(t,u,v)*ContractIndices(nu &t dx,[[1,2]]);
eq2 := ContractIndices(Pft &t dy,[[1,2]])*sqrt(det) = 
	b(t,u,v)*ContractIndices(nu &t dy,[[1,2]]);
eq3 := ContractIndices(Pft &t dz,[[1,2]])*sqrt(det) = 
	b(t,u,v)*ContractIndices(nu &t dz,[[1,2]]);
eq4 := diff(b(t,u,v),t) = 
	(A*TrLgGCftCft - TrL/2*(Gftft+A*TrgGCftCft))*sqrt(det);
\end{lstlisting}
\caption{Maple source code to set up the geodesic equation.}
\label{nu:maple}
\end{figure}

Unfortunately, Maple (as of version 14) is not able to solve 
PDEs with more than one space variable numerically. 
Thus the equations were translated into Mathematica.
The PDE was solved using the method of lines. Spatial discretization was done 
using an equidistant grid, and spatial derivatives were 
replaced by finite differences.
The time-derivative $f_t$ appears implicitly in the equation 
$P_f(f_t)=a.\nu$, and this remains so 
when the operator $P_f$ is replaced by finite differences.

The solver that has been used is 
the Implicit Differential-Algebraic 
(IDA) solver that is part of the SUNDIALS suite and is integrated into Mathematica. 
IDA uses backward differentiation of order 1 to 5 with 
variable step widths. Order 5 is standard and has also been used here. 
At each time step, the new value of $f_t$ is computed using some previous values of $f$, and then 
the new value of $f$ is calculated from the equation $P_f( f_t)=a.\nu$.  
The dependence on $f$ in this equation is of course highly nonlinear.
A Newton method is used to solve it. 
This operation is quite costly and has to be done at every step, 
which is a main disadvantage of backward differentiation algorithms.
Explicit methods are probably much better adapted
to the problem. The implementation of an explicit solver 
is ongoing common work of the authors with Martins Bruveris and Colin Cotter.

In the examples that follow, $f$ at time zero is a square $[0,\pi]\x[0,\pi]$ flatly embedded in $\R^3$. 
This is a manifold with boundary, but it can be seen as a part of a bigger closed manifold. 
Zero boundary conditions are used for both $f$ and $a$. 
It remains to specify an initial condition for $a$. 
As a first example, let us assume that $a$ at time zero equals $\sin(x)\sin(y)$, 
where $x,y$ are the Euclidean coordinates on the square.
The resulting geodesic is depicted in figure~\ref{nu:bump_1}. 
In the absence of a closed-form solution of the geodesic equation, one way to 
check if the solution is correct is to see if the energy $G^P(f_t,f_t)$ 
is conserved. Figure~\ref{nu:energyplot} shows 
this for the geodesic from figure~\ref{nu:bump_1}
with various space and time discretizations.

A more complicated example of a geodesic is shown in figure \ref{nu:A_ge} and \ref{nu:A_in}. 
There, the initial velocity was chosen to be a smoothened version of a black and white
picture of the letter A. The initial momentum was computed from it using a discrete 
Fourier transform. 

Finally, it is shown in figure \ref{nu:se} that self-intersections of the surface can occur. 
This is not due to a numerical error but part of the theory, 
and can be an advantage or a disadvantage depending 
on the application.

\bibliographystyle{plain}

\end{document}